\documentclass{article}
\usepackage{amsmath}
\usepackage{amscd,amsthm}
\usepackage{amstext}
\usepackage{amssymb}
\usepackage{amsfonts}
\usepackage{latexsym}
\usepackage{bookman}
\usepackage{tikz-cd}
\usepackage{color}
\usepackage{cite}
\textheight24.5cm
\textwidth16cm 
\oddsidemargin0cm
\evensidemargin0cm
\topmargin-2cm
\sloppy
\numberwithin{equation}{section}
\allowdisplaybreaks[1]
\newtheorem{theorem}{Theorem}[section] 
\newtheorem{lemma}[theorem]{Lemma}
\newtheorem{proposition}[theorem]{Proposition} 
\newtheorem{corollary}[theorem]{Corollary} 
\theoremstyle{definition}
\newtheorem{definition}[theorem]{Definition} 
\theoremstyle{remark}
\newtheorem{ownremark}[theorem]{Remark} 
\newcommand{\beq}{\begin{eqnarray}}
\newcommand{\eeq}{\end{eqnarray}}
\newcommand{\hra}{\hookrightarrow}
\newcommand{\bli}{\begin{list}{}{\labelwidth6mm\leftmargin8mm}}
\newcommand{\eli}{\end{list}}
\newcommand{\ds}{\displaystyle}
\newcommand{\R}{{\mathbb R}}
\newcommand{\Rn}{{\mathbb R}^{d}} 
\newcommand{\N}{\ensuremath{\mathbb N}}
\newcommand{\No}{\N_{0}}
\newcommand{\nat}{\N}
\newcommand{\no}{\No}
\newcommand{\Z}{\mathbb Z} 
\newcommand{\Zn}{\Z^{d}}
\newcommand{\C}{{\mathbb C}}
\newcommand{\SRn}{\mathcal{S}(\Rn)}
\newcommand{\SpRn}{\mathcal{S}'(\Rn)}

\newcommand{\dint}{\;\mathrm{d}}
\newcommand{\dist}{\;\mathrm{dist}}
\newcommand{\A}{\ensuremath{A^s_{p,q}}}  
\newcommand{\B}{\ensuremath{B^s_{p,q}}}  
\newcommand{\be}{\ensuremath{B^{s_1}_{p_1,q_1}}}  
\newcommand{\bz}{\ensuremath{B^{s_2}_{p_2,q_2}}}  
\newcommand{\fe}{\ensuremath{F^{s_1}_{p_1,q_1}}}  
\newcommand{\fz}{\ensuremath{F^{s_2}_{p_2,q_2}}}  
\newcommand{\Ae}{\ensuremath{A^{s_1}_{p_1,q_1}}}  
\newcommand{\Az}{\ensuremath{A^{s_2}_{p_2,q_2}}}  
\newcommand{\F}{\ensuremath{F^s_{p,q}}}  
\newcommand{\supp}{\ensuremath{{\mathop{\rm supp}\nolimits\, }}}
\def\Id{\mathop{\rm Id}\nolimits}
\def\id{\mathop{\rm id}\nolimits}
\newcommand{\cl}{{\mathcal{L}}} 
\newcommand{\lrie}{L_{r,\infty}^{(e)}}
\newcommand{\whole}[1]{\ensuremath\left\lfloor #1 \right\rfloor}
\newcommand{\M}{\ensuremath{{\cal M}_{u,p}}}  
\newcommand{\MA}{\ensuremath{{\mathcal A}}^{s}_{u,p,q}}  
\newcommand{\MB}{\ensuremath{{\cal N}^{s}_{u,p,q}}}  
\newcommand{\MBe}{\ensuremath{{\cal N}^{s_1}_{u_1,p_1,q_1}}}  
\newcommand{\MBz}{\ensuremath{{\cal N}^{s_2}_{u_2,p_2,q_2}}}  
\newcommand{\MFe}{\ensuremath{{\cal E}^{s_1}_{u_1,p_1,q_1}}}  
\newcommand{\MFz}{\ensuremath{{\cal E}^{s_2}_{u_2,p_2,q_2}}}
\newcommand{\MAe}{\ensuremath{{\cal A}^{s_1}_{u_1,p_1,q_1}}}  
\newcommand{\MAz}{\ensuremath{{\cal A}^{s_2}_{u_2,p_2,q_2}}}  
\newcommand{\MF}{\ensuremath{{\cal E}^{s}_{u,p,q}}}  
\newcommand{\mb}{\ensuremath{n^{s}_{u,p,q}}}

\newcommand{\msib}{\ensuremath{n^{\sigma}_{u,p,q}}}

\newcommand{\mbt}{\ensuremath{\widetilde{n}^{\sigma}_{u,p,q}}}  
\newcommand{\mbet}{\ensuremath{\widetilde{n}^{\sigma_1}_{u_1,p_1,q_1}}}  
\newcommand{\mbzt}{\ensuremath{\widetilde{n}^{\sigma_2}_{u_2,p_2,q_2}}}  
\newcommand{\mmb}{\ensuremath{m^{2^{jd}}_{u,p}}} 
\newcommand{\mmbet}{\ensuremath{{m}^{2^{jd}}_{u_1,p_1}}}  
\newcommand{\mmbzt}{\ensuremath{{m}^{2^{jd}}_{u_2,p_2}}}  
\newcommand{\ext}{\ensuremath{{\mathop{\rm ext}\nolimits}}}
\newcommand{\re}{\ensuremath{\mathop{\rm re}\nolimits}}
\newcommand{\ignore}[1]{}
\title{ Entropy numbers of compact embeddings of Smoothness Morrey spaces on bounded domains}
\author{Dorothee~D.~Haroske\thanks{Institute of Mathematics, Friedrich Schiller University Jena, 07737 Jena, Germany}, 
 and Leszek~Skrzypczak\thanks{Faculty of Mathematics and  Computer Science, Adam Mickiewicz University,  61-614 Poznan, Poland} \thanks{The author  was supported by National Science Center, Poland,  Grant No.2013/10/A/ST1/00091.}}
\begin{document} 
\maketitle 
\begin{abstract} 
We study the compact embedding between smoothness Morrey spaces on bounded domains and characterise its entropy numbers. Here we discover a new phenomenon when the difference of smoothness parameters in the source and target spaces is rather small compared with the influence of the fine parameters in the Morrey setting. In view of some partial forerunners this was not to be expected till now. Our argument relies on wavelet decomposition techniques of the function spaces and a careful study of the related sequence space setting. 
\end{abstract} 

\section{Introduction}
Let $\Omega\subset\Rn$ be a bounded $C^\infty$ domain and $\MB(\Omega)$ and $\MF(\Omega)$ smoothness Morrey spaces, with $s_i\in\R$, $0<p_i\leq u_i<\infty$, $0<q_i\leq\infty$, $i=1,2$. Roughly speaking, these spaces $\MB$ and $\MF$ are the counterparts of the well-known Besov and Triebel-Lizorkin function spaces $\B$ and $\F$, respectively, where the basic $L_p$ space in the latter scales is replaced by the {Morrey space}
  $\M$, $0<p\le u<\infty $: this is the set of all
  locally $p$-integrable functions $f\in L_p^{\mathrm{loc}}(\Rn)$  such that
\begin{equation}\label{i-Mo}
\|f \mid {\M(\Rn)}\| =\, \sup_{x\in \Rn, R>0} R^{\frac{d}{u}-\frac{d}{p}}
\left[\int_{B(x,R)} |f(y)|^p \dint y \right]^{\frac{1}{p}}\, <\, \infty\, .
\end{equation}
For the precise definition and further properties we refer to Section~\ref{sect-MB-Rn} below.

In our recent papers \cite{hs12b,hs14} we obtained a complete characterisation when the corresponding embeddings
\begin{equation}\label{i-0}
\id_{\mathcal{N}}: \MBe(\Omega)\to \MBz(\Omega)\quad \text{and}\quad \id_{\mathcal{E}}:\MFe(\Omega)\to \MFz(\Omega)
\end{equation}
are compact. The main purpose of the present paper is to characterise this compactness in terms of the corresponding entropy numbers. Apart from our first outcome in \cite{HaSk-krakow} (and some parallel results for approximation numbers in \cite{YHMSY,Bai-Si}) we are not aware of any other such investigation. This seems a little surprising in view of the popularity of smoothness Morrey spaces recently and the possible applications of entropy (and approximation) number results. However, these latter options are out of the scope of the present paper.

Smoothness Morrey spaces have been studied  intensely in the past couple of years opening a wide field of possible applications. 
The Besov-Morrey spaces $\mathcal{N}^{s}_{u,p,q}(\Rn)$ were introduced  in \cite{KY} by Kozono and Yamazaki and used by them and Mazzucato \cite{Maz} to study Navier-Stokes equations. Corresponding Triebel-Lizorkin-Morrey spaces $\mathcal{E}^s_{u,p,q}(\Rn)$ were introduced in \cite{TX} by Tang and Xu, where the authors established the Morrey version of Fefferman-Stein vector-valued inequality. Properties of these spaces such as wavelet characterisations were studied in the papers by Sawano \cite{Saw2, Saw1}, Sawano and Tanaka \cite{ST2, ST1}, and Rosenthal \cite{MR-1}.   We give some further references in Section~\ref{sect-MB-Rn} below.

Now we concentrate on embeddings within the scale of such spaces. Let $s_i\in\R$, $0<p_i\leq u_i<\infty$, $0<q_i\leq\infty$, $i=1,2$. In \cite{HaSk-bm1,hs14} we characterised the continuity of the embeddings 
$\id: \MBe(\Rn)\to \MBz(\Rn)$ \text{and} $\id:\MFe(\Rn)\to \MFz(\Rn)$.
Furthermore, in \cite{HaSk-bm1, hs12b, hs14} we studied some limiting embeddings.
But these embeddings can never be compact.
However, turning to spaces on bounded domains, we obtained in \cite{hs12b,hs14} necessary and sufficient conditions for the corresponding embeddings \eqref{i-0} to be compact:  this is the case if, and only if,
\begin{equation}\label{i-1}
\frac{s_1-s_2}{d} > \max\left\{0,\frac{1}{u_1}-\frac{1}{u_2}, \frac{p_1}{u_1}\left(\frac{1}{p_1}-\frac{1}{p_2}\right)\right\}.
\end{equation}
Recall that in case of $p_i=u_i$, $i=1,2$, we return to the classical situation of Besov and Triebel-Lizorkin spaces. 
 Then the above findings \eqref{i-1} are in perfect agreement with { the well-known condition  $\frac{s_1-s_2}{d} > \max\left\{0,\frac{1}{p_1}-\frac{1}{p_2}\right\}$,}
for the compactness of $\id_B: \be(\Omega)\to\bz(\Omega)$, 
similarly for $\id_F: \fe(\Omega)\to\fz(\Omega)$. 
However, a lot more about the compactness of the embeddings $\id_B$, $\id_F$ has been found in that `classical' situation. The `degree of compactness' as reflected by the asymptotic behaviour of the corresponding entropy numbers, is well-known; for the definition of  entropy numbers as well as further applications of this concept we refer to Section~\ref{sect-ek} below. Concerning the embedding $\id_B$ Edmunds and Triebel obtained in  \cite{ET1,ET2} for the asymptotic decay of the entropy numbers that 
\begin{equation}
\label{i-3}
e_k\left(\id_B : \be(\Omega)\to \bz(\Omega)\right)\quad\sim
\quad k^{-\frac{s_1-s_2}{d}},\quad k\in\N,
\end{equation}
the result for $F$-spaces is  parallel.

Now we tackle the Morrey situation in full generality, that is, we study the asymptotic behaviour of the entropy numbers of the compact embeddings $\id_{\mathcal{N}}$ and  $\id_{\mathcal{E}}$ given by \eqref{i-0} whenever \eqref{i-1} is satisfied. In our recent contribution  \cite{HaSk-krakow} we obtained first results in some cases, using sharp  embeddings and interpolation techniques as main tools. In particular, under the additional assumption {$\frac{s_1-s_2}{d} > \max\left\{0, \frac{1}{p_1}-\frac{1}{u_2}\right\}$}  
we could prove {the same asymptotic behaviour of the entropy numbers   of  $\id_{\mathcal{N}}$ as in \eqref{i-3}},
with the observation, that the lower estimate remains valid in  all cases of compactness \eqref{i-1}. Again, $\id_{\mathcal{N}}$ can be replaced by $\id_{\mathcal{E}}$. 
At this point the remaining case was left open, that is, when
\[
\max\left\{0,\frac{1}{u_1}-\frac{1}{u_2}, \frac{p_1}{u_1}\left(\frac{1}{p_1}-\frac{1}{p_2}\right)\right\}<
\frac{s_1-s_2}{d} \leq \max\left\{0, \frac{1}{p_1}-\frac{1}{u_2}\right\}
\]
is satisfied. Plainly, this case requires at least the source space to be of `proper' Morrey type, that is, $p_1<u_1$. Otherwise it would be excluded. Based on the `classical' outcome \eqref{i-3}, where always the difference of smoothness parameters $s_1-s_2$ characterises the decay of the entropy numbers, as well as the observation that our first approach in \cite{HaSk-krakow} verified the lower bound of that type always to hold, some reasonable assumption was that {\eqref{i-3}} 
should be extended to all cases admitted by \eqref{i-1}. However, our present results disprove this claim: we found that the splitting point for the asymptotic behaviour of entropy numbers is $s_1-s_2=d(\frac{1}{p_1}-\frac{1}{p_2})$, which is particularly remarkable since the $p_i$-parameters can be regarded as the fine tuning of the local integrability of the function $f$, recall \eqref{i-Mo}. So the interplay between smoothness and local regularity seems more important than expected so far, since in the local-global result \eqref{i-3} this difference was hidden. Our main outcome is the following:
\begin{itemize}
\item[{\bfseries (a)}]
  If $\quad 
  \frac{s_1-s_2}{d} > \max\left\{0,\frac{1}{u_1}-\frac{1}{u_2}, \frac{p_1}{u_1}\left(\frac{1}{p_1}-\frac{1}{p_2}\right)\right\} = \max\left\{0,\frac{1}{u_1}-\frac{1}{u_2}\right\}$, 
or
$\ \frac{s_1-s_2}{d} >\frac{p_1}{u_1}\left(\frac{1}{p_1}-\frac{1}{p_2}\right) > \max\left\{0,\frac{1}{u_1}-\frac{1}{u_2}\right\}\quad\text{and}\quad \frac{s_1-s_2}{d}>\frac{1}{p_1}-\frac{1}{p_2}$,
  then
\begin{equation}\label{ek-3}
  e_k\left( \id_{\mathcal{N}}: \MBe(\Omega )\hookrightarrow \MBz(\Omega )\right)\sim k^{-\frac{s_1-s_2}{d}},\quad k\in\N.
\end{equation}
\item[{\bfseries (b)}]
  If $\ 
  \max\left\{0,\frac{1}{u_1}-\frac{1}{u_2}\right\}< \frac{p_1}{u_1}\left(\frac{1}{p_1}-\frac{1}{p_2}\right) < \frac{s_1-s_2}{d} \leq \frac{1}{p_1}-\frac{1}{p_2}$, 
  then there exists some $c>0$ and for any $\varepsilon>0$ some $c_\varepsilon>0$ such that for all $k\in\nat$, 
\begin{equation}\label{ek-4}
  c k^{- \alpha} \le    e_k \big(\id_{\mathcal{N}}\big) \le c_\varepsilon k^{- \alpha  +\varepsilon}
\end{equation}
with
$\ \alpha = \frac{u_1}{u_1-p_1}\left(\frac{s_1-s_2}{d}-\frac{p_1}{u_1}\left(\frac{1}{p_1}- \frac{1}{p_2}\right) \right)$.
  \end{itemize}
  Despite the (only) two-sided estimate in \eqref{ek-4} the lower estimate already disproves the idea that \eqref{ek-3} could be true in all cases. This is in our opinion the really astonishing part of the outcome, but also the other cases (beyond {the one}  
  dealt with in \cite{HaSk-krakow}) are new. {Moreover we prove that the above estimates hold for the smoothness Morrey spaces defined on an  arbitrary bounded domain in $\R^d$.}

          The paper is organised as follows. In Section~\ref{preli} we briefly introduce the corresponding function spaces and the concept of entropy numbers, in Section~\ref{entro_seq} the main results are proved. This is done in terms of appropriate sequence spaces which result from the decomposition of the function spaces by wavelets. In the concluding Section~\ref{entro_func} we establish the results for function spaces and  discuss some special situations.

      \section{Preliminaries}
\label{preli}

First we fix some notation. By $\N$ we denote the \emph{set of natural numbers},
by $\No$ the set $\N \cup \{0\}$. For $a\in\R$, let $a_+:=\max\{a,0\}$ and $\whole{a}=\max\{k\in\Z: k\leq a\}$. For a set $\Omega\subset\Rn$ we denote its characteristic function by $\chi_\Omega$. 

All unimportant positive constants will be denoted by $c$, occasionally with
subscripts. The notation $A \lesssim B$ means that there exists a positive constant $c$ such that
$A \le c \,B$, whereas  the symbol $A \sim B$ stands for $A \lesssim B \lesssim A$.

Given two (quasi-)Banach spaces $X$ and $Y$, we write $X\hookrightarrow Y$
if $X\subset Y$ and the natural embedding of $X$ into $Y$ is continuous.

\subsection{Smoothness Morrey spaces on $\Rn$}\label{sect-MB-Rn}

Recall that the \emph{Morrey space}
  $\M(\Rn)$, $0<p\le u<\infty $, is defined to be the set of all
locally $p$-integrable functions $f\in L_p^{\mathrm{loc}}(\Rn)$  satisfying \eqref{i-Mo}.

\begin{ownremark}
The spaces $\M(\Rn)$ are quasi-Banach spaces (Banach spaces for $p \ge 1$).
They originated from Morrey's study on PDE (see \cite{Mor}) and are part of the wider class of Morrey-Campanato spaces; cf. \cite{Pee}. They can be considered as a complement to $L_p$ spaces, since $\mathcal{M}_{p,p}(\Rn) = L_p(\Rn)$ with $p\in(0,\infty)$, extended by $\mathcal{M}_{\infty,\infty}(\Rn)  = L_\infty(\Rn)$. In a parallel way one can define the spaces $\mathcal{M}_{\infty,p}(\Rn)$, $p\in(0, \infty)$, but using the Lebesgue differentiation theorem, one arrives at $\mathcal{M}_{\infty, p}(\Rn) = L_\infty(\Rn)$. Moreover, $\M(\Rn)=\{0\}$ for $u<p$, and for  $0<p_2 \le p_1 \le u < \infty$,
$	L_u(\Rn)= \mathcal{M}_{u,u}(\Rn) \hookrightarrow  \mathcal{M}_{u,p_1}(\Rn)\hookrightarrow  \mathcal{M}_{u,p_2}(\Rn)$.
\end{ownremark}

Now we present the smoothness spaces of Morrey type in which we are  interested. 
The Schwartz space $\SRn $ and its dual $ \SpRn $ of all 
complex-valued tempered distributions have their usual meaning here.
Let  $ \varphi_0=\varphi \in \SRn $ be such that  
$\supp \varphi\subset\left\{y\in\Rn:|y|<2\right\}$ {and} $\varphi(x)=1$ {if} $|x|\leq 1$,
and for each $ j\in\N\;$ let $ \varphi_j(x)=
\varphi(2^{-j}x)-\varphi(2^{-j+1}x)$. Then $ \{\varphi_j\}_{j=0}^\infty $
forms a {\em smooth dyadic resolution of unity}. Given any $ f\in \SpRn$, we denote by 
$ {\mathcal F} f $ and $ {\mathcal F}^{-1} f$ its Fourier transform and its
inverse Fourier transform, respectively.

\begin{definition}\label{d2.5}
Let $0 <p\leq  u<\infty$ or $p=u=\infty$. Let  $0<q\leq\infty$, $s\in \R$ and $\left\{\varphi_j\right\}_j$
a smooth dyadic resolution of unity.
\bli
\item[{\upshape\bfseries (i)}]
The  {\em Besov-Morrey   space}
  $\MB(\Rn)$ is defined to be the set of all distributions $f\in \SpRn$ such that
\begin{align*}
\big\|f\mid \MB(\Rn)\big\|:=
\bigg[\sum_{j=0}^{\infty}2^{jsq}\big\| \mathcal{F}^{-1}(\varphi_j\mathcal{F}f) \mid
\M(\Rn)\big\|^q \bigg]^{1/q} < \infty
\end{align*}
with the usual modification made in case of $q=\infty$.
\item[{\upshape\bfseries  (ii)}]
Let $u\in(0,\infty)$. The  {\em Triebel-Lizorkin-Morrey  space} $\MF(\Rn)$
is defined to be the set of all distributions $f\in   \SpRn$ such that
\begin{align*}
\big\|f \mid \MF(\Rn)\big\|:=\bigg\|\bigg[\sum_{j=0}^{\infty}2^{jsq} |
 \mathcal{F}^{-1}(\varphi_j\mathcal{F}f)|^q\bigg]^{1/q}
\mid \M(\Rn)\bigg\| <\infty
\end{align*}
with the usual modification made in case of  $q=\infty$.
\eli
\end{definition}


\noindent{\em Convention.}~We adopt the nowadays usual custom to write $\A$ instead of $\B$ or $\F$,  and $\MA$ instead of $\MB$ or $\MF$, respectively, when both scales of spaces are meant simultaneously in some context.

\begin{ownremark}
The  spaces $\MA(\Rn)$ are independent of the particular choice of the smooth dyadic resolution of unity
$\left\{\varphi_j\right\}_j $ appearing in their definitions.
They are quasi-Banach spaces
(Banach spaces for $p,\,q\geq 1$), and $\SRn \hookrightarrow
\MA(\Rn) \hookrightarrow \SpRn$.  When $u=p$, they coincide with the usual  Besov and Triebel-Lizorkin spaces, ${\cal A}^{s}_{p,p,q}(\Rn) = \A(\Rn)$.
There exists extensive literature on the latter spaces; we
refer, in particular, to the series of monographs \cite{T-F1,T-F2,T-F3} for a
comprehensive treatment.
In case of  $u<p$ we have $\MA(\Rn)=\{0\}$. We 
occasionally benefit from the elementary embeddings
\begin{equation}\label{elem-class} 
B^{s}_{p,\min\{p,q\}}(\Rn)\, \hookrightarrow \, \F(\Rn)\, \hookrightarrow \, B^{s}_{p,\max\{p,q\}}(\Rn),
\end{equation}
where $p\in(0,\infty)$, $q\in(0,\infty]$ and $s\in\R$. The result for spaces $\MA$ is different: Sawano proved in \cite{Saw2} that, for $s\in\R$ and $0<p< u<\infty$,
\begin{equation}\label{elem}
	{\cal N}^s_{u,p,\min\{p,q\}}(\Rn)\, \hookrightarrow \, \MF(\Rn)\, \hookrightarrow \,{\cal N}^s_{u,p,\infty}(\Rn),
\end{equation}
where, for the latter embedding, $r=\infty$ cannot be improved.
Mazzucato has shown in \cite[Prop.~4.1]{Maz} that
$\ \mathcal{E}^0_{u,p,2}(\Rn)=\M(\Rn),\quad 1<p\leq u<\infty$.
\end{ownremark}

\begin{ownremark}
Other important, closely related scales of spaces are the so-called Besov-type spaces $B^{s,\tau}_{p,q}(\Rn)$ and Triebel-Lizorkin-type spaces $F^{s,\tau}_{p,q}(\Rn)$, introduced in \cite{ysy}, as well as the local and hybrid spaces dealt with in \cite{t13} and \cite{t14}, { cf. also \cite{s011, s011a} and  \cite{YHMSY, YHSY}.}
\end{ownremark}

We briefly recall the wavelet characterisation of Besov-Morrey  spaces  proved in \cite{Saw2}. It will be essential for our final results and motivates our studies on sequence spaces. For $m\in \Zn$ and $\nu \in \Z$ we define  a $d$-dimensional dyadic cube  
with sides parallel 
to the axes of coordinates by 
$
Q_{\nu,m} = \prod_{i=1}^d  \left.\left[ \frac{m_i}{2^\nu},\frac{m_i+1}{2^\nu}\right.\right)$, $\nu\in\Z$,  $m=(m_1,\ldots , m_d) \in \Zn$. 
  For $0<u<\infty$, $\nu \in \Z$ and $m\in\Zn$ we denote by $\chi_{\nu, m}^{(u)}$ the $u$-normalised characteristic function of  the cube $Q_{\nu, m}$, 
 $\ \chi_{\nu, m}^{(u)} = 2^{{\nu d}/{u}}  \chi_{Q_{\nu,m}}$, 
hence $\| \chi_{\nu, m}^{(u)}|L_p\|=1$ and $\| \chi_{\nu, m}^{(u)}|\M\|=1$. 

Let $\widetilde{\phi}$ be a scaling function  on $\R$ with compact support and of sufficiently high regularity.
Let $\widetilde{\psi}$ be an associated wavelet. Then the  tensor-product ansatz yields a scaling function $\phi$  and associated wavelets
$\psi_1, \ldots, \psi_{2^{d}-1}$, all defined now on $\Rn$.  We suppose $\widetilde{\phi} \in C^{N_1}(\R)$ and $\supp \widetilde{\phi}
\subset [-N_2,\, N_2]$ for certain natural numbers $N_1$ and $N_2$. This implies
\begin{equation}\label{2-1-2}
\phi, \, \psi_i \in C^{N_1}(\Rn) \quad \text{and} \quad 
\supp \phi ,\, \supp \psi_i \subset [-N_3,\, N_3]^d , 
\end{equation}
for $i=1, \ldots \, , 2^{d}-1$. We use the standard abbreviations 
\begin{equation}
\phi_{\nu,m}(x) =  2^{\nu d/2} \, \phi(2^\nu x-m) \quad
\text{and}\quad
\psi_{i,\nu,m}(x) =  2^{\nu d/2} \, \psi_i(2^\nu x-m) . 
\end{equation}

To formulate the result we  introduce some sequence  spaces. For $0<p\le u<\infty$, $0< q \leq\infty$ and  $\sigma\in \R$, let
\begin{multline}
n^{\sigma}_{u,p,q}  :=   \Bigg\{ \lambda = 
\{\lambda_{\nu,m}\}_{\nu,m} : \     \lambda_{\nu,m} \in \C\, , 
\\
\| \, \lambda \, |n^{\sigma}_{u,p,q}\| = \Big\| 
\Big\{2^{\nu(\sigma-\frac{d}{u}) }\,  \Big\|\sum_{m \in
  \Zn}\lambda_{\nu,m}\, \chi^{(u)}_{\nu,m}| \M 
\Big\|\Big\}_{\nu\in\no} | \ell_q\Big\| < \infty \Bigg\}\, . 
\label{mbspqr}
\end{multline}
The following theorem was proved in \cite{Saw2}. 

\begin{theorem}\label{wavemorrey}  
Let $0 < p\le u < \infty$ or $u=p=\infty$, $0<q\le \infty$ and let $s\in \R$. 
Let $\phi$ be a scaling function and let $\psi_i$,  $i=1, \ldots ,2^d-1$, be
the corresponding wavelets satisfying \eqref{2-1-2}. We assume that {$ \max\left\{ (1+\whole{s})_+, \whole{d(\frac 1 p -1)_+ -s} \right\}  \le N_1$}. Then a distribution $f \in \SpRn$ belongs to $\MB(\Rn)$,
if, and only if, 
\begin{align*}
\| \, f \, |\MB(\Rn)\|^\star  = \ &  
\Big\| \left\{\langle f,\phi_{0,m}\rangle \right\}_{m\in \Z^d} |
\ell_u\Big\| 
+ \sum_{i=1}^{2^d-1}
\Big\| \left\{\langle f,\psi_{i,\nu,m}\rangle \right\}_{\nu\in \N_0, m\in \Z^d} | \msib \Big\|
\end{align*}
is finite, where $\sigma=s+\frac d 2 $. Furthermore,
$\| \, f \, |\MB(\Rn) \|^\star $ may be used as an 
equivalent $($quasi-$)$ norm in 
$\MB(\Rn)$.
\end{theorem}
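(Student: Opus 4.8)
The plan is to follow the standard route for proving wavelet characterisations of function spaces, as developed by Triebel and adapted to the Morrey setting by Sawano. First I would fix a sufficiently smooth dyadic resolution of unity $\{\varphi_j\}_j$ and recall the local means / atomic decomposition available for $\MB(\Rn)$; the hypothesis on $N_1$ ensures the wavelet system is admissible, i.e. each $\psi_{i,\nu,m}$ is (a constant multiple of) an atom adapted to $Q_{\nu,m}$ with enough smoothness $(1+\whole{s})_+\le N_1$ and, via the moment conditions coming from $\int x^\alpha\widetilde\psi=0$, enough vanishing moments $\whole{d(1/p-1)_+-s}\le N_1$ to serve in the decomposition. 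The argument then splits into the two usual inequalities.

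For the estimate $\|f\mid\MB\|^\star\lesssim\|f\mid\MB\|$ I would argue that the wavelet coefficients are local means of $f$: writing $\langle f,\psi_{i,\nu,m}\rangle=2^{-\nu d/2}(k_{i,\nu}\ast f)(2^{-\nu}m)$ for a suitable kernel $k_{i,\nu}$ with the stated support and moment properties, one applies the Morrey-valued maximal inequality (the Fefferman--Stein type inequality of Tang--Xu, or the convolution estimates in Sawano's paper) to bound $\bigl\|\sum_m\langle f,\psi_{i,\nu,m}\rangle\chi^{(u)}_{\nu,m}\mid\M\bigr\|$ by a constant times $\|\mathcal F^{-1}(\varphi_\nu\mathcal Ff)\mid\M\|$ plus harmless neighbouring terms, then take the $\ell_q$-norm in $\nu$. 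The $\ell_u$-term for the $\phi_{0,m}$ coefficients is handled similarly (and more easily, since only finitely many frequencies are involved). The converse estimate $\|f\mid\MB\|\lesssim\|f\mid\MB\|^\star$ proceeds by reconstructing $f=\sum_m\langle f,\phi_{0,m}\rangle\phi_{0,m}+\sum_{i,\nu,m}\langle f,\psi_{i,\nu,m}\rangle\psi_{i,\nu,m}$ and recognising this as an atomic (or molecular) decomposition of $f$ in $\MB$; invoking the atomic decomposition theorem for Besov--Morrey spaces then gives $\|f\mid\MB\|\lesssim\|\{\langle f,\psi_{i,\nu,m}\rangle\}\mid\msib\|+\|\{\langle f,\phi_{0,m}\rangle\}\mid\ell_u\|$, which is exactly $\|f\mid\MB\|^\star$.

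A technical point that needs care is the convergence of the wavelet series in $\SpRn$ and the identification of the coefficients as genuine dual pairings, especially for negative $s$; here one uses that $\SRn\hookrightarrow\MB(\Rn)\hookrightarrow\SpRn$ together with density arguments, or works first on a dense subspace and extends by continuity. One also has to verify that the support and smoothness conditions \eqref{2-1-2} together with the moment conditions inherited from $\widetilde\psi$ are precisely what the atomic decomposition theorem demands with the parameters $s$, $p$, $u$, $q$ in question — this is where the bound on $N_1$ is consumed.

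The main obstacle, and the step where the Morrey structure genuinely intervenes rather than being a routine transcription of the classical Besov case, is the maximal-function estimate in the $\M$-norm: the usual Hardy--Littlewood maximal operator is not bounded on $\M$ in the full quasi-Banach range, so one must use the vector-valued Morrey inequality of Tang--Xu (valid for $1<p\le u<\infty$) combined with a $\sigma$-trick / geometric-series argument à la Sawano to push down to the quasi-Banach range $0<p\le u<\infty$, carefully tracking how the normalisation $\chi^{(u)}_{\nu,m}=2^{\nu d/u}\chi_{Q_{\nu,m}}$ interacts with the scaling $2^{\nu(\sigma-d/u)}$ in the definition \eqref{mbspqr}. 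Once this inequality is in hand in the required generality, both directions close in the usual way. (Of course, since the theorem is attributed to \cite{Saw2}, in the paper itself one simply cites it; the above is how one would reconstruct the proof.)
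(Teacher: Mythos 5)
The paper does not prove this theorem at all: it is quoted verbatim from Sawano's work \cite{Saw2}, exactly as you note at the end of your proposal. Your reconstruction (local means for the estimate of the coefficients, atomic/molecular decomposition for the converse, with the Morrey-adapted maximal inequality and the admissibility conditions on $N_1$ doing the technical work) is a faithful outline of the strategy actually used in \cite{Saw2}, so there is nothing to compare against within the paper itself.
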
 

\begin{ownremark}\label{wavemorreyrem}
It follows from Theorem~\ref{wavemorrey} that the mapping
\begin{equation}\label{wavemorreyrem1}
T\,:\,f\;\mapsto \; \Big( \left\{\langle f,\phi_{0,m}\rangle \right\}_{m\in \Z^d}, \left\{\langle
f,\psi_{i,\nu,m}\rangle \right\}_{\nu\in \N_0, m\in \Z^d, i=1,\ldots, 2^d-1}\Big)
\end{equation}
is an isomorphism of  $\MB(\Rn) $ onto $\ell_u\oplus
\big(\oplus_{i=1}^{2^d-1} \msib\big)$, $\sigma=s+\frac d 2 $, 
cf. \cite{Saw2}.

The theorem covers the characterisation of Besov spaces $B^s_{p,q}(\Rn)$ by Daubechies wavelets,  cf. \cite[p.26-34]{T-F3} and the references given there. 
\end{ownremark}

\begin{ownremark}
  In \cite{HaSk-bm1} we defined an equivalent norm in the sequence spaces $\msib$ that is more convenient for our purposes. Let $\sigma\in \R$, $0<q\le \infty$, $0<p \le u<\infty$ or $u=p=\infty$.  For a sequence $\{\lambda_{j,m}\}_{j,m}$, $j\in \N_0$, $ m\in \Z^d$, consider a quasi-norm 
\beq
\|\lambda|\msib\|^\ast = \Big(\sum_{j=0}^\infty 2^{qj(\sigma-\frac d u)}\!\!\!\sup_{\nu: \nu \le j; k\in \Z^d}\!\! 2^{qd(j-\nu)(\frac 1 u - \frac 1 p )}\big(\!\sum_{m:Q_{j,m}\subset Q_{\nu,k}}\!\!\!\!|\lambda_{j,m}|^p\big)^{\frac q p}\Big)^{\frac 1 q}\!\!,\,
\label{3-0}
\eeq
with the usual modification if $q=\infty$ or $u=p=\infty$. Then
\begin{eqnarray}	
\msib\, =\, \left\{ \lambda = \{\lambda_{j,m}\}_{j,m}: j\in \N_0,\quad  m\in \Z^d\quad\text{and}\quad  
 \|\lambda|\msib\|^\ast <\infty  \right\}. 
\end{eqnarray}
\end{ownremark}

\subsection{Function spaces on domains}

{We assume that  $\Omega$ is a bounded open set  in $\Rn$.}
We consider smoothness Morrey spaces on $\Omega$ defined by restriction.


\begin{definition}\label{D-spaces-Omega}
Let $0 <p\leq  u<\infty$ or $p=u=\infty$, $q\in(0,\infty]$ and  $s\in \R$ (with $u<\infty$ in the case of ${\cal A}={\cal E}$). Then 
$\MA(\Omega)$ is defined by
\[
\MA(\Omega):=\big\{f\in {\cal D}'(\Omega): f=g\vert_{\Omega} \text{ for some } g\in \MA(\Rn)\big\}
\]
endowed with the quasi-norm
\[
\big\|f\mid \MA(\Omega)\big\|:= \inf \big\{ \|g\mid \MA(\Rn)\|:  f=g\vert_{\Omega}, \; g\in  \MA(\Rn)\big\}.
\]
\end{definition}

\begin{ownremark}
  The spaces $\MA(\Omega)$ are quasi-Banach spaces (Banach spaces for $p,q\geq 1$).
Several properties of the spaces $\MA(\Omega)$, including  the extension property, were studied in \cite{Saw2010} {in the case when $\Omega$ is  a bounded $C^{\infty}$ domain in $\Rn$.  With the same assumption, embeddings}  
within spaces in this latter scale as well as to classical spaces like $C(\Omega)$ or $L_r(\Omega)$ were investigated in \cite{hs12b,hs14}. In \cite{hms} we studied the question under what assumptions these spaces consist of regular distributions only.
\end{ownremark}

We recall in detail our compactness result as obtained in \cite{hs12b} (for $\mathcal{A}=\mathcal{N}$) and \cite{hs14} (for $\mathcal{A}=\mathcal{E}$). 
\begin{theorem}  \label{comp}
Let $\Omega$ be a bounded $C^{\infty}$ domain in $\Rn$, $s_i\in \R$, $0<q_i\leq\infty$, $0<p_i\leq u_i<\infty$, $i=1,2$.  Then the embedding 
\begin{equation} \label{bd1comp}
	 \id_{\mathcal{A}}: \MAe(\Omega )\hookrightarrow \MAz(\Omega )
\end{equation}
is compact if, and only if,  
the following condition holds
\begin{equation}\label{cond-comp}
\frac{s_1-s_2}{d} > \max\bigg\{0,\frac{1}{u_1} - \frac{1}{u_2}, 
\frac{p_1}{u_1} \Big(\frac{1}{p_1}- \frac{1}{p_2}\Big) \bigg\}.
\end{equation}
\end{theorem}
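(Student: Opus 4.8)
The plan is to transfer the problem to the wavelet sequence space setting of Theorem~\ref{wavemorrey} and Remark~\ref{wavemorreyrem}, and to reduce the case $\mathcal A=\mathcal E$ to the case $\mathcal A=\mathcal N$. Since $\Omega$ is a bounded $C^\infty$ domain it has the extension property for the spaces $\MA$ (see \cite{Saw2010}), so $\MB(\Omega)$ is isomorphic to the subspace of $\ell_u\oplus\bigl(\bigoplus_{i=1}^{2^d-1}\msib\bigr)$, $\sigma=s+\tfrac{d}{2}$, consisting of those coefficient sequences supported on the indices $(\nu,m)$ with $Q_{\nu,m}$ meeting a fixed neighbourhood of $\Omega$; at each level $\nu$ only $\sim 2^{\nu d}$ such cubes occur. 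Under this identification $\id_{\mathcal N}$ becomes the identity on coefficient sequences which, since the quasi-norm \eqref{3-0} is an $\ell_q$-sum of terms each depending on a single level only, is the block-diagonal operator $\bigoplus_{\nu\ge 0}T_\nu$, with $T_\nu$ acting between the \emph{finite-dimensional} level-$\nu$ slices of source and target. For the spaces $\MF$ one squeezes $\MFe(\Omega)$ and $\MFz(\Omega)$ between Besov--Morrey spaces with the same $s,u,p$ but modified fine index, cf.\ \eqref{elem} and \eqref{elem-class}; as \eqref{cond-comp} does not involve $q_1,q_2$, compactness of $\id_{\mathcal E}$ is then equivalent to compactness of a suitable $\mathcal N$-embedding, and it suffices to treat $\id_{\mathcal N}$.

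For sufficiency I would estimate the norm of $T_\nu$ as an operator between the level-$\nu$ slices and prove
\[\|T_\nu\|\ \lesssim\ 2^{-\nu(s_1-s_2)}\,2^{\nu d\,\mathcal{M}},\qquad\text{where}\qquad \mathcal{M}:=\max\Bigl\{0,\tfrac{1}{u_1}-\tfrac{1}{u_2},\tfrac{p_1}{u_1}\bigl(\tfrac{1}{p_1}-\tfrac{1}{p_2}\bigr)\Bigr\},\]
so that \eqref{cond-comp} reads $\tfrac{s_1-s_2}{d}>\mathcal{M}$. By homogeneity and a monotonicity/localisation reduction, \eqref{3-0} need only be evaluated on configurations with constant modulus on a union of whole dyadic subcubes sitting inside one coarse cube; the operator norm then turns into an explicit, piecewise-linear optimisation in two parameters -- the level $\mu\le\nu$ of the coarse cube realising the inner supremum, and a sparsity parameter describing how the marked level-$\nu$ cubes are distributed inside it. Its extreme cases are a full level (exponent $0$), a single cube (exponent $\tfrac{1}{u_1}-\tfrac{1}{u_2}$) and the sparse pattern with one marked cube in each subcube of level $\approx\nu(1-p_1/u_1)$ (exponent $\tfrac{p_1}{u_1}(\tfrac{1}{p_1}-\tfrac{1}{p_2})$), whose maximum is $\mathcal{M}$. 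Granting this, the strict inequality gives $\|T_\nu\|\le c\,2^{-\nu\varepsilon}$ for some $\varepsilon>0$; since each $T_\nu$ has finite rank ($\Omega$ being bounded), the truncations $\bigoplus_{\nu\le N}T_\nu$ are finite-rank operators converging to $\id_{\mathcal N}$ in operator norm (the tail being controlled by $\sup_{\nu>N}\|T_\nu\|$, or by an $\ell_r$-tail of $(\|T_\nu\|)_\nu$ when $q_1>q_2$, both tending to $0$ because of the geometric decay), so $\id_{\mathcal N}$ is compact.

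For necessity, assume $\tfrac{s_1-s_2}{d}\le\mathcal{M}$ and let the maximum defining $\mathcal{M}$ be attained at one of its three terms. For each large $j$ I would take the corresponding extremal level-$j$ configuration from the optimisation above, place it inside a fixed cube $Q\subset\subset\Omega$, pass to the associated function, restrict it to $\Omega$ and normalise it in $\MBe(\Omega)$ to obtain $f_j$. The same computation that produced the lower bound for $\|T_j\|$ then gives $\|f_j\mid\MBz(\Omega)\|\gtrsim 2^{jd(\mathcal{M}-\frac{s_1-s_2}{d})}\ge c>0$, and since the $f_j$ have wavelet coefficients supported at mutually distinct scales, $\|f_j-f_{j'}\mid\MBz(\Omega)\|\gtrsim 1$ for $j\ne j'$. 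Hence $(f_j)_j$ is bounded in $\MBe(\Omega)$ but $(\id_{\mathcal N}f_j)_j$ has no convergent subsequence, so $\id_{\mathcal N}$ is not compact; the statement for $\id_{\mathcal E}$ follows from \eqref{elem} and \eqref{elem-class} as above. This yields the asserted equivalence.

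The step I expect to be the main obstacle is the sharp upper estimate $\|T_\nu\|\lesssim 2^{-\nu(s_1-s_2)+\nu d\mathcal{M}}$: one must show that \emph{no} level-$\nu$ configuration beats the three extremal patterns, i.e.\ obtain the clean bound and not merely a matching lower bound from examples. This is a genuine two-parameter optimisation, made delicate by the nested supremum in the Morrey quasi-norm \eqref{3-0} and by the fact that $p_1,p_2,u_1,u_2$ (and, for the block summation, $q_1,q_2$) all vary independently; I would control it by first reducing, via monotonicity, to constant-modulus configurations supported on unions of whole subcubes, where the optimisation becomes explicit. A secondary technical point is to make the reduction of the first paragraph rigorous for the Morrey quasi-norm, whose defining supremum runs over all of $\Rn$: one checks that for coefficient sequences supported near the bounded set $\Omega$ only finitely many localisation scales contribute, so the restricted sequence space is indeed the correct model.
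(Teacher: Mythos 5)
First, a point of reference: the paper does not prove Theorem~\ref{comp} at all --- it is recalled verbatim from \cite{hs12b} (case $\mathcal{A}=\mathcal{N}$) and \cite{hs14} (case $\mathcal{A}=\mathcal{E}$), so there is no in-paper proof to compare against line by line. Judged against the machinery the paper does develop, your strategy is the right one and is consistent with it: the wavelet transfer and the two commutative diagrams are exactly those of the ``Preparation'' in Section~\ref{entro_func}; the reduction of $\mathcal{E}$ to $\mathcal{N}$ via \eqref{elem} and the $q$-independence of \eqref{cond-comp} is the same trick used in the proof of Theorem~\ref{ek_classical_fu}; and your central quantitative claim $\|T_\nu\|\sim 2^{-\nu(s_1-s_2)}2^{\nu d\mathcal{M}}$ is precisely Lemma~\ref{lemma15030} after inserting the weight $2^{-\nu\delta}$, $\delta=\sigma_1-\sigma_2-\frac{d}{u_1}+\frac{d}{u_2}$ (one checks case by case that \eqref{1503-0}--\eqref{1503-a} combine with this weight to give exactly the exponent $\mathcal{M}$). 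Your three extremal patterns are also the correct ones; in particular the sparse pattern with $\lfloor 2^{\nu d(1-p_1/u_1)}\rfloor$ marked cubes is literally the sequence $\lambda^{(j)}$ used in Step~3 of the proof of Theorem~\ref{reallymorreycase}. The block-diagonal structure is legitimate because each $j$-summand in \eqref{3-0} depends only on the level-$j$ coefficients, and your treatment of the tail (sup of block norms for $q_1\le q_2$, an $\ell_r$-sum via H\"older for $q_1>q_2$) is the standard correct argument.

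The genuine gap is the one you flag yourself: the sharp \emph{upper} bound for $\|T_\nu\|$, i.e.\ the content of Lemma~\ref{lemma15030}, which this paper also does not prove but imports from \cite{HaSk-sm}. Two cautions on your proposed route to it. First, the reduction ``by monotonicity to constant-modulus configurations supported on unions of whole subcubes'' is unproblematic for producing lower bounds (test sequences) but is not automatic for an operator-norm upper bound; you would have to justify that general $\lambda$ cannot beat such configurations, and this is exactly where the nested supremum in \eqref{3-0} bites. The cleaner mechanism, visible in Step~1 of the proof of Proposition~\ref{slargebis}, is a H\"older-type interpolation inequality such as \eqref{uu1a}, $\|\lambda|m^{2^{jd}}_{u_2,p_2}\|\le 2^{jd(\frac{1}{u_2}-\frac{p_1}{p_2u_1})}\|\lambda|\ell_\infty\|^{1-p_1/p_2}\|\lambda|m^{2^{jd}}_{u_1,p_1}\|^{p_1/p_2}$, combined with $\|\lambda|\ell_\infty\|\le\|\lambda|m^{2^{jd}}_{u_1,p_1}\|$; this gives the hardest case ($p_1<p_2$, $\frac{p_2}{u_2}>\frac{p_1}{u_1}$) directly, without any optimisation over configurations. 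Second, in the necessity argument make sure the lower bound for $\|T_j\|$ you invoke is the two-sided equivalence and not merely the upper bound; as verified above it does hold in all four parameter regimes, so your single-level witnesses $f_j$, supported at mutually distinct scales inside a fixed cube $Q\subset\subset\Omega$, do yield $\|f_j-f_{j'}|\MBz(\Omega)\|\gtrsim 1$ and hence non-compactness. With Lemma~\ref{lemma15030} granted (or reproved via the H\"older route), your argument closes.
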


In case of the target space $L_\infty(\Omega)$ we obtained in \cite{hs12b,hs14} the following result, again for bounded $C^\infty$ domains $\Omega$.

\begin{proposition}\label{MorreyintoLinfty}
 Let $s\in\R$, $0<p < u <\infty$ and $q\in(0,\infty]$. Then
$\MA(\Omega) \hookrightarrow L_{\infty}(\Omega)$\quad \text{is compact, if, and only if,}\quad 
$s> \frac{d}{u}$.
\end{proposition}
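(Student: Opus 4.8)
\textbf{Proof proposal for Proposition~\ref{MorreyintoLinfty}.}
The plan is to reduce the statement to known facts about smoothness Morrey spaces on $\Rn$ together with the embedding and extension machinery for spaces on bounded domains. First I would observe that, by the elementary embeddings \eqref{elem} (for the $\mathcal{E}$-scale) and the monotonicity of $\MB$ in the fine parameter $q$, it suffices to treat the case $\mathcal{A}=\mathcal{N}$; indeed $\MF(\Omega)\hookrightarrow {\cal N}^s_{u,p,\infty}(\Omega)$, so compactness of ${\cal N}^s_{u,p,\infty}(\Omega)\hookrightarrow L_\infty(\Omega)$ yields the same for $\MF(\Omega)$, and conversely ${\cal N}^s_{u,p,\min\{p,q\}}(\Omega)\hookrightarrow\MF(\Omega)$ transfers non-compactness. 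Similarly the monotonicity ${\cal N}^s_{u,p,q_0}(\Omega)\hookrightarrow{\cal N}^s_{u,p,q_1}(\Omega)$ for $q_0\le q_1$ lets me fix a convenient value of $q$ in each of the two implications.

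For the \emph{sufficiency} part, assume $s>\frac{d}{u}$. The standard strategy is to first establish the \emph{continuous} embedding $\MA(\Rn)\hookrightarrow L_\infty(\Rn)$ (or rather into $C(\Rn)$), which in the Morrey setting is known: $\MB(\Rn)\hookrightarrow B^{s-d/u}_{\infty,q}(\Rn)\hookrightarrow L_\infty(\Rn)$ when $s>d/u$, via the elementary embedding of Besov-Morrey spaces into classical Besov spaces with shifted smoothness and integrability $\infty$ (cf.\ \cite{HaSk-bm1,Saw2}). Restricting to $\Omega$ and using that $\Omega$ is bounded with the extension property (Definition~\ref{D-spaces-Omega}) gives the continuous embedding $\MA(\Omega)\hookrightarrow L_\infty(\Omega)$. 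To upgrade continuity to compactness, I would pick an auxiliary parameter $\tilde s$ with $\frac{d}{u}<\tilde s<s$, so that $\MA(\Omega)\hookrightarrow{\cal N}^{\tilde s}_{u,p,q}(\Omega)$ is compact (this is covered by Theorem~\ref{comp} applied with $s_1=s$, $s_2=\tilde s$, $u_1=u_2$, $p_1=p_2$, since then \eqref{cond-comp} reads $\frac{s-\tilde s}{d}>0$) and factor $\MA(\Omega)\hookrightarrow{\cal N}^{\tilde s}_{u,p,q}(\Omega)\hookrightarrow L_\infty(\Omega)$; compactness of the first factor and boundedness of the second give compactness of the composition.

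For the \emph{necessity} part, suppose $\MA(\Omega)\hookrightarrow L_\infty(\Omega)$ is compact, hence in particular continuous; I must show $s>\frac{d}{u}$. Here I would argue by contradiction using the wavelet characterisation (Theorem~\ref{wavemorrey}, Remark~\ref{wavemorreyrem}): if $s\le\frac{d}{u}$ one constructs, inside $\Omega$, a sequence of normalised elements of $\MA(\Omega)$ built from wavelets $\psi_{i,\nu,m}$ concentrated on a single dyadic cube $Q_{\nu,m}\subset\Omega$ at each scale $\nu$. Using $\|\chi^{(u)}_{\nu,m}\mid\M\|=1$ one checks that a single-term sequence $\lambda_{\nu,m}=2^{-\nu(\sigma-d/u)}=2^{-\nu(s-d/u)-\nu d/2}$ has unit norm in $\msib$, while the corresponding function $f_\nu=\langle\text{coeff}\rangle\,\psi_{i,\nu,m}$ has $L_\infty$-norm $\sim 2^{\nu d/2}\cdot 2^{-\nu(s-d/u)-\nu d/2}=2^{-\nu(s-d/u)}$, which stays bounded away from $0$ when $s\le d/u$; moreover these $f_\nu$ form a sequence with no convergent subsequence in $L_\infty(\Omega)$ (their supports shrink, so any limit would be $0$, contradicting the uniform lower bound on the norm). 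This shows the embedding cannot even be bounded when $s<d/u$ and fails to be compact at the endpoint $s=d/u$. The main obstacle is handling the borderline case $s=\frac{d}{u}$ cleanly — there the embedding into $L_\infty$ is \emph{bounded} (since $B^0_{\infty,q}\hookrightarrow L_\infty$ fails for $q>1$, one actually has to be a little careful and may instead use the endpoint embedding $\MB(\Omega)\hookrightarrow B^{0}_{\infty,\infty}(\Omega)$ combined with $B^0_{\infty,\infty}\not\hookrightarrow L_\infty$, or work directly with the bmo-type target) — so the non-compactness at $s=d/u$ has to be extracted from a genuinely non-trivial lack of convergence argument rather than from unboundedness. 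I would therefore spend most of the effort making the single-cube wavelet construction precise and verifying that the resulting sequence is simultaneously normalised in $\MA(\Omega)$ and bounded below in $L_\infty(\Omega)$ with no Cauchy subsequence, at $s=d/u$.
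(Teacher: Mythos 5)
The paper itself offers no proof of Proposition~\ref{MorreyintoLinfty}: it is simply recalled from \cite{hs12b,hs14}, so there is no in-text argument to compare against. Judged on its own, your proposal is correct and follows the standard route used in those references. The sufficiency part is sound: the continuous embedding $\MB(\Rn)\hookrightarrow B^{s-d/u}_{\infty,\infty}(\Rn)\hookrightarrow L_\infty(\Rn)$ for $s>d/u$ (via the Morrey--Nikolskii inequality for band-limited functions), transferred to $\Omega$ by the extension operator, combined with the compact factorisation $\MA(\Omega)\hookrightarrow{\cal N}^{\tilde s}_{u,p,q}(\Omega)\hookrightarrow L_\infty(\Omega)$ for some $d/u<\tilde s<s$ (the first factor compact by Theorem~\ref{comp} with equal $u$'s and $p$'s), gives compactness. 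The necessity part is also sound: a single wavelet per scale, normalised via $\|\chi^{(u)}_{\nu,m}\mid\M\|=1$ so that $\|f_\nu\mid\MA(\Omega)\|\lesssim 1$ while $\|f_\nu\|_\infty\sim 2^{-\nu(s-d/u)}$, rules out boundedness for $s<d/u$; and at $s=d/u$, placing the wavelets on pairwise disjoint cubes gives $\|f_\nu-f_{\nu'}\|_\infty\geq c>0$, so the image of the unit ball is not totally bounded regardless of whether the embedding happens to be bounded there -- your parenthetical worry about the precise boundedness status of the endpoint embedding is therefore immaterial to the argument. The reduction of the ${\cal E}$-case to the ${\cal N}$-case via \eqref{elem} is legitimate since the condition $s>d/u$ does not involve $q$. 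I see no gap.
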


Finally we state some outcome on real interpolation of Besov-Morrey spaces on $\Omega$ which will be of great use for us in the sequel. It can be proved similarly to the proof of \cite[Thm.~1.110]{T-F3},  relying on the corresponding assertions  with $\Omega$ replaced by $\Rn$
that can be found in \cite[Thm.~2.2, Prop.~2.3]{s011a}, 
and on the existence of a common extension operator, cf. \cite[Thm.~5.4]{Saw2010}.

\begin{theorem}\label{ThmInterpolation}
Let  $u, q,q_0,q_1\in(0,\infty]$ and $\theta \in(0,1)$. 
\bli
\item[{\hfill\bf (i)\hfill}]  Let $0< p\le u < \infty$ and  $s_0,s_1\in\R$ with $s_0\neq s_1$. Then  
\[
\MB(\Omega)=\left( {\cal N}^{s_0}_{u,p,q_0}(\Omega),  {\cal N}^{s_1}_{u,p,q_1}(\Omega)  \right)_{\theta,q}  \qquad \text{if} \qquad   s=(1-\theta)s_0+\theta s_1. 
\]
\item[{\hfill\bf (ii)\hfill}] Let $1\le p\le u < \infty$ and $s\in\R$. Then 
\[
\MB(\Omega)=\left( {\cal N}^{s}_{u,p,q_0}(\Omega),  {\cal N}^{s}_{u,p,q_1}(\Omega)  \right)_{\theta,q} \qquad \text{if} \qquad \frac 1q=\frac{1-\theta}{q_0}+\frac{\theta}{q_1}.
\]
 \eli
\end{theorem}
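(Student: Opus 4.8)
\textbf{Proof strategy for Theorem~\ref{ThmInterpolation}.}
The plan is to reduce the statement on the domain $\Omega$ to the corresponding interpolation identities on $\Rn$ by means of a linear extension operator that is simultaneously bounded on all the spaces involved, together with the obvious restriction operator. First I would invoke \cite[Thm.~5.4]{Saw2010}: for a bounded $C^\infty$ domain $\Omega$ there is a common extension operator $\ext$, i.e. a linear map $\ext\colon \MB(\Omega)\to\MB(\Rn)$ with $\re\circ\ext=\Id$ on $\MB(\Omega)$ (where $\re$ denotes restriction to $\Omega$), and the same $\ext$ works for the whole relevant family of parameters $(s,q)$ simultaneously. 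This is the crucial structural ingredient; once it is available, the argument is the standard retraction/co-retraction trick for interpolation of complemented subspaces, exactly as in the proof of \cite[Thm.~1.110]{T-F3}.

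The key steps are then as follows. In part~(i), I would start from the $\Rn$-identity
\[
\MB(\Rn)=\left({\cal N}^{s_0}_{u,p,q_0}(\Rn),\,{\cal N}^{s_1}_{u,p,q_1}(\Rn)\right)_{\theta,q},\qquad s=(1-\theta)s_0+\theta s_1,
\]
which is \cite[Thm.~2.2]{s011a} (and similarly part~(ii) uses \cite[Prop.~2.3]{s011a} for the case $s_0=s_1$, $1/q=(1-\theta)/q_0+\theta/q_1$, which needs $p\ge1$). Applying the interpolation functor to the bounded maps $\re$ between the $\Rn$-spaces and their $\Omega$-counterparts, and using that $\re$ is a retraction with co-retraction $\ext$, one gets $\re$ bounded from $\left({\cal N}^{s_0}_{u,p,q_0}(\Rn),{\cal N}^{s_1}_{u,p,q_1}(\Rn)\right)_{\theta,q}=\MB(\Rn)$ onto $\left({\cal N}^{s_0}_{u,p,q_0}(\Omega),{\cal N}^{s_1}_{u,p,q_1}(\Omega)\right)_{\theta,q}$, and $\ext$ bounded in the reverse direction; the identity $\re\circ\ext=\Id$ then forces the interpolation space on $\Omega$ to coincide (with equivalent quasi-norms) with $\re\big(\MB(\Rn)\big)=\MB(\Omega)$. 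Care must be taken that $(\,\cdot\,,\,\cdot\,)_{\theta,q}$ still behaves well as a quasi-Banach interpolation method when some $q_i<1$; here one relies on the fact that the real method retains the retraction property in the quasi-Banach range, which is already implicit in the cited $\Rn$-results.

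The main obstacle is not any single estimate but rather checking that a \emph{single} extension operator does the job across the entire parameter range appearing in the statement — in particular simultaneously for $({\cal N}^{s_0}_{u,p,q_0},{\cal N}^{s_1}_{u,p,q_1})$ with possibly different smoothness and fine indices. This is precisely where the $C^\infty$-assumption on $\Omega$ and the strength of \cite[Thm.~5.4]{Saw2010} enter; the universal wavelet-based extension constructed there is bounded on $\MB(\Rn)$ for all admissible $(s,p,u,q)$ at once, so the co-retraction is genuinely common to source and target. Beyond that, the proof is a routine transcription of the classical domain-interpolation argument, so I would simply point to \cite[Thm.~1.110]{T-F3} for the remaining details rather than reproduce them.
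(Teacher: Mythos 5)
Your proposal follows exactly the route the paper indicates: it reduces the domain statement to the $\Rn$-interpolation identities of \cite[Thm.~2.2, Prop.~2.3]{s011a} via the common extension operator of \cite[Thm.~5.4]{Saw2010} and the standard retraction/co-retraction argument as in \cite[Thm.~1.110]{T-F3}. This matches the paper's own (sketched) proof, so nothing further is needed.
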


\subsection{Entropy numbers}\label{sect-ek}
As explained in the beginning already, our main concern in this paper is to characterise the compactness of embeddings in further detail. Therefore we briefly recall the concept of entropy numbers.

\begin{definition}\label{defi-ak}
Let $ X $ and $Y$ be two complex (quasi-) Banach spaces, 
$k\in\nat\ $ and let $\ T\in\mathcal{L}(X,Y)$ be a linear and 
continuous operator from $ X $ into $Y$. 
The {\em k\,th entropy number} $\ e_k(T)\ $ of $\ T\ $ is the
infimum of all numbers $\ \varepsilon>0\ $ such that there exist $\ 2^{k-1}\ $ balls
in $\ Y\ $ of radius $\ \varepsilon\ $ which cover the image $\ T\,B_X$ of the unit ball $\ B_X=\{x\in
X:\;\|x|X\|\leq 1\}$.
\end{definition}

\begin{ownremark}
For details and properties of entropy numbers we refer to \cite{CS,EE,Koe,Pie-s} (restricted to the case of Banach spaces), and \cite{ET} for some extensions to quasi-Banach spaces. Among other
features we only want to mention 
the multiplicativity of entropy numbers: let $X,Y,Z$
be complex (quasi-) Banach spaces and $\ T_1 \in\mathcal{L}(X,Y)$, $ T_2 \in\mathcal{L}(Y,Z)$. Then
\beq
e_{k_1+k_2-1} (T_2\circ T_1) \leq e_{k_1}(T_1)\,
e_{k_2} (T_2),\quad k_1, k_2\in\nat.
\label{e-multi}
\eeq

Note that one has  
$\ \lim_{k\rightarrow\infty} e_k(T)= 0\ $ {if, and only if,} 
$\ T\ $ {is compact}. 
The last equivalence justifies the saying that entropy numbers measure
`how compact' an operator acts. This is one reason to study the asymptotic
behaviour of entropy numbers (that is, their decay) for compact operators in
detail. Another one is the application to estimate eigenvalues; we refer to the monographs \cite{CS,EE,ET,Koe,Pie-s} for further details.

\ignore{
Another very prominent concept is that of approximation numbers  $a_k(T)$  of an operator $T \in\mathcal{L}(X,Y)$, defined by 
$$a_k(T) = \inf \{ \| T - S \| : S \in {\mathcal L}(X,Y), \,
\mathrm{rank}\, S < k \},\quad k\in\nat.$$
They can -- unlike entropy numbers -- be regarded as special {\itshape
  $s$-numbers}, a concept introduced by {Pietsch} \cite[Sect.~11]{Pia}. Of special importance is the close connection of both concepts, entropy numbers as well as approximation numbers, with spectral theory, in particular, the estimate of eigenvalues. We refer to the monographs \cite{CS,EE,ET,Koe,Pie-s} for further details.}
\end{ownremark}

We recall the following property of entropy numbers which was obtained (in the Banach case situation) in \cite[Sect.~1.16.2]{T-I} and \cite[12.1]{Pia} and extended to the quasi-Banach case setting in  \cite[Theorem~3.2]{HT1} and \cite[Theorem~1.3.2]{ET}. Let $A$ be a
quasi-Banach space and let $\{ B_0 , B_1 \}$ be an interpolation couple of
quasi-Banach spaces. Let $0 < \theta <1$ and let $B_\theta$ be a quasi-Banach
space such that
$B_0 \cap B_1 \hra B_\theta \hra B_0 + B_1 $  \mbox{(naturally quasi-normed)}
and
\begin{equation}      \label{5.25}
\| b \, | B_\theta \| \le \| b \, | B_0 \|^{1- \theta} \, \| b \, | B_1
\|^\theta \quad \mbox{for all} \quad b \in B_0 \cap B_1.
\end{equation}
Let $T \in \mathcal{L}(A, B_0 \cap B_1).$ Then there is a number $c>0$ such that for all $k \in \nat$,
\begin{equation}      \label{ek_ipol}
e_{2k}(T  : A \hra B_\theta ) \le c \, e^{1- \theta}_k (T : A \hra B_0 ) \, e^\theta_k (T : A \hra B_1 ).
\end{equation}

Now we benefit from the technique of quasi-normed operator ideals. In the context of entropy numbers the approach goes back to Carl \cite{Carl81}. It was used for Sobolev embeddings first time in \cite{kuehn-exp,K-L-S-S}. For a bounded linear operator $T \in \cl (X,Y)$, where
$X$ and $Y$ are (quasi)-Banach spaces,  and a positive real number $r$ we put 
\begin{equation}\label{3-2-14}
L_{r,\infty}^{(e)} (P) := \sup_{n \in \N} \, n^{1/r} \, e_n (P) \, .
\end{equation}
This is a quasi-norm (in general not a norm)
for the operator ideal of all operators $P$ with
$L_{r,\infty}^{(e)} (P) < \infty$, cf. Pietsch \cite{Pia,Pie-s}. 

One of the main tools in our arguments 
will be the characterisation of the asymptotic 
behaviour of 
the entropy numbers of the embedding $\ell_{p_1}^N 
\hookrightarrow \ell_{p_2}^N$. For all $n \in \N$ we have
\begin{equation}\label{Schuett}
e_k \Big(\id: \, \ell_{p_1}^N \hra \ell_{p_2}^N\Big) \sim
\begin{cases}
1 & \mbox{if}\quad 1 \le k \le \log 2N , \\
\Big( \frac{\log (1+\frac{N}{k})}{k}\Big)^{\frac{1}{p_1}-\frac{1}{p_2}} 
& \mbox{if}\quad \log 2N \le k \le 2N, \\
2^{-\frac{k}{2N}} \, N^{\frac{1}{p_2}-\frac{1}{p_1}}
& \mbox{if}\quad 2N \le k, 
\end{cases}
\end{equation}
and if $0<p_2<p_1\le \infty $ it holds 
\begin{equation}\label{Pietsch}  
e_k \Big(\id: \, \ell_{p_1}^N \hra \ell_{p_2}^N\Big) \sim 2^{-\frac{k}{2N}} \, N^{\frac{1}{p_2}-\frac{1}{p_1}} \qquad \text{for all}\quad k\in \N. 
\end{equation}
In the case $1 \le p_1,p_2 \le \infty$ this has been proved by
Sch\"utt \cite{Sch}. For $p_1<1$ and/or $p_2<1$  
we refer to Edmunds and Triebel \cite{ET} and
Triebel \cite[7.2,\, 7.3]{TrFS} (with a little supplement in \cite{Kue2}).

The characterisation of the asymptotic behaviour of the entropy numbers $e_k\big( \ell^N_{p_1} \hookrightarrow \ell^N_{p_2} \big)$, recall \eqref{Schuett} and \eqref{Pietsch}, implies
\begin{equation}\label{3-2-16}
\lrie (\id : \ell_{p_1}^{N} \hra \ell_{p_2}^{N})  
\sim   
N^{\frac 1r - \frac{1}{p}} \quad  \mbox{if}\quad \frac 1 r > \max \Big(0, \frac{1}{p}\Big) \ \text{and} 
\ \frac1p=\frac{1}{p_1}-\frac{1}{p_2}.
\end{equation}

\section{Entropy numbers of compact embeddings in sequence spaces}\label{entro_seq}

Recall our remarks about the wavelet characterisation of spaces $\MB(\Rn)$, in particular, Theorem~\ref{wavemorrey}. Our strategy in estimating corresponding entropy numbers will be to transfer the question to the appropriate sequence spaces via wavelet decompositions. This method will be explained in further detail in the beginning of Section~\ref{entro_func} below. We begin with the sequence space problem and have thus to adapt our sequence spaces to the spaces on bounded domains first.

\begin{ownremark}
Note that we use the most convenient setting for this purpose: we assume (implicitly) that the supports of the corresponding distributions are inside the domain, thus avoiding boundary wavelets. It turns out that this apparently simpler situation is already sufficient for our problem. This justifies our approach below. Note that we already used a similar argument in \cite{hs12b,hs14}  when dealing with the compactness assertion.
\end{ownremark}

Let $Q$ be a unit cube, $0<p\leq u<\infty$, $\sigma\in\R$, $0<q\leq\infty$.  We define a sequence space $\mbt(Q)$ putting 
\[
\mbt(Q) :=   \Big\{ \lambda = 
\{\lambda_{j,m}\}_{j,m} : \lambda_{j,m} \in \C ,\   Q_{j,m}\subset Q,\ \text{and} \ \|  \lambda \, |\mbt\|<\infty\Big\},
\]
where
\[\|  \lambda \, |\mbt\| = \left(\sum_{j=0}^\infty 2^{jq(\sigma-\frac{d}{u}) }  
\!\!\!\!\sup_{\nu: \nu \le j; k: Q_{\nu,k}\subset Q}\! 2^{qd(j-\nu)(\frac 1 u - \frac 1 p )}\Big(\!\sum_{m:Q_{j,m}\subset Q_{\nu,k}}\!\!\!|\lambda_{j,m}|^p\Big)^{\frac q p}\right)^{\frac1q},
\]
with the usual modification when $q=\infty$. Moreover for fixed $j\in \N_0$ we put  
\begin{align*}
\mmb = \{ \lambda = 
\{\lambda_{j,m}\}_{j,m} : \quad     \lambda_{j,m} \in \C \quad  Q_{j,m}\subset Q \quad \text{and}\quad\|\lambda|\mmb\|< \infty\},
\end{align*}
{where} \quad
$\ds \|\lambda|\mmb\| = \sup_{\nu: \nu \le j; k\in \Z^d}\! 2^{d(j-\nu)(\frac 1 u - \frac 1 p )}\Big(\sum_{m:Q_{j,m}\subset Q_{\nu,k}\subset Q}\!\!|\lambda_{j,m}|^p\Big)^{\frac 1 p}$.

\begin{ownremark}\label{seq-n-b}
If $p=u$, then the spaces $\tilde{n}^\sigma_{p,p,q}(Q)=\tilde{b}^\sigma_{p,q}(Q)$ coincide with the spaces $\ell_q(2^{j(\sigma-\frac{d}{p})}\ell_p^{M_j})$, $M_j\sim 2^{jd}$, as dealt with in \cite[Sect.~8.1]{TrFS}. Then in case of the (sequence space version of the) compact embedding \eqref{bd1comp} recalled in Theorem~\ref{comp}, that is,
$\ \id: \mbet \hookrightarrow \mbzt$,
it can always be extended to the chain of embeddings
$\tilde{b}^{\sigma_1}_{u_1,q_1} \hookrightarrow \mbet \hookrightarrow \mbzt \hookrightarrow \tilde{b}^{\sigma_2}_{p_2,q_2}$,
where the first embedding is just by monotonicity and the last using H\"older's inequality and the boundedness of $Q$. Thus the multiplicativity of entropy numbers \eqref{e-multi} immediately leads to
\beq\label{ek_below_class}
e_k\left(\mbet \hookrightarrow \mbzt\right) \geq \ c\ e_k\left(\tilde{b}^{\sigma_1}_{u_1,q_1} \hookrightarrow \tilde{b}^{\sigma_2}_{p_2,q_2}\right)\sim \ k^{-\frac{\sigma_1-\sigma_2}{d}}
\eeq
in all cases admitted by \eqref{cond-comp}; the latter equivalence can be found in \cite[Thm.~8.2]{TrFS}.
\end{ownremark}

\begin{ownremark}
  It is obvious from Theorem~\ref{wavemorrey} that the smoothness parameters $s$ and $\sigma$, appearing on the function spaces and sequence spaces side, respectively, are linked by $\sigma=  s+\frac{d}{2}$, so in all cases below where the difference $\sigma_1-\sigma_2$ appears, we could equally call it $s_1-s_2$. 
\end{ownremark}

We collect some recent results from our paper \cite{HaSk-sm} needed for our arguments below.

\begin{lemma}[cf. \cite{HaSk-sm}]\label{lemma15030}
Let $0< p_i\le u_i<\infty$, $i=1,2$, and $j\in \no$ 
be given. Then the norm of the compact identity operator  
\begin{equation}\label{id_j-m}
 \id_j: \mmbet\hookrightarrow \mmbzt
\end{equation}
satisfies
\begin{equation}\label{1503-0}
\|\id_j\| = 
\begin{cases}
1 &\qquad \text{if} \quad p_1\ge p_2\quad \text{and }\quad u_2\ge u_1,\quad  \\
 1 &\qquad \text{if} \quad p_1 < p_2\quad \text{and }\quad \frac{p_2}{u_2} \le \frac{p_1}{u_1},\\  
 2^{jd(\frac{1}{u_2}-\frac{1}{u_1})}&\qquad \text{if} \quad p_1\ge p_2\quad \text{and }\quad u_2 < u_1,\\
 \end{cases}
\end{equation}
and in the remaining case, there is a constant $c$, $0<c\le 1$, independent of $j$ such that 
\begin{equation}\label{1503-a}
c\, 2^{jd(\frac{1}{u_2}-\frac{p_1}{u_1p_2})} \le\|\id_j\| \leq 
2^{jd(\frac{1}{u_2}-\frac{p_1}{u_1p_2})}  \qquad \text{if} \quad  
p_1 < p_2 \ \text{and } \ \frac{p_2}{u_2} > \frac{p_1}{u_1}\ .
\end{equation}
\end{lemma}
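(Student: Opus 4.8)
The plan is to compute the norm of $\id_j$ by analyzing the finite-dimensional sequence space $\mmb$ directly. The key observation is that $\mmb$ is built over the dyadic sub-cubes $Q_{j,m}\subset Q$; there are $\sim 2^{jd}$ of them, and the quasi-norm is a supremum over "containing" dyadic cubes $Q_{\nu,k}$ with $\nu\le j$ of a weighted $\ell_p$-norm of the coefficients sitting inside $Q_{\nu,k}$. First I would reduce to the extremal structure of this quasi-norm: for a fixed level $\nu$ the inner expression $2^{d(j-\nu)(\frac1u-\frac1p)}\big(\sum_{m:Q_{j,m}\subset Q_{\nu,k}}|\lambda_{j,m}|^p\big)^{1/p}$ groups the $2^{(j-\nu)d}$ coefficients in $Q_{\nu,k}$ with the weight $2^{d(j-\nu)(\frac1u-\frac1p)}$, so on each such block it behaves like a scaled $\ell_p^{2^{(j-\nu)d}}$-norm, and the full quasi-norm is the maximum over all scales $\nu=0,\dots,j$ and all positions $k$ of these block-norms. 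Thus $\id_j$ is essentially a "maximum over scales" of embeddings between rescaled $\ell_p$-spaces of varying dimensions, and $\|\id_j\|$ is the worst such constant.

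Next I would carry out the case analysis according to the monotonicity of the relevant exponents. For $\ell_{p_1}^M\hookrightarrow\ell_{p_2}^M$ the norm is $1$ if $p_1\ge p_2$ and $M^{1/p_2-1/p_1}$ if $p_1<p_2$; combining this with the $u$-dependent weights $2^{d(j-\nu)(\frac1{u_i}-\frac1{p_i})}$ on source and target and optimizing over the scale parameter $t:=j-\nu\in\{0,\dots,j\}$ gives, in each regime, either $1$ (when the optimizing exponent of $2^{dt}$ is $\le 0$, so $t=0$ is best) or a growing power $2^{jd\cdot(\text{exponent})}$ (when the exponent is $>0$, so $t=j$ is best, corresponding to the single cube $Q_{0,k}=Q$). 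Tracking the exponents: in the case $p_1\ge p_2$, $u_2\ge u_1$ the block-norm constant is $\le 2^{dt(\frac1{u_2}-\frac1{p_2})-dt(\frac1{u_1}-\frac1{p_1})}=2^{dt(\frac1{u_2}-\frac1{u_1})}\cdot(\text{something}\le1)$, whose exponent is $\le0$, giving $\|\id_j\|=1$; in the case $p_1\ge p_2$, $u_2<u_1$ the same exponent is positive and $t=j$ yields $2^{jd(\frac1{u_2}-\frac1{u_1})}$. When $p_1<p_2$ one picks up the dimension factor $M^{1/p_2-1/p_1}$ with $M\sim 2^{td}$ on the target block; collecting powers of $2^{dt}$ gives exponent $\frac1{u_2}-\frac1{p_2}+(\frac1{p_2}-\frac1{p_1})-(\frac1{u_1}-\frac1{p_1})=\frac1{u_2}-\frac1{u_1}+\text{(correction)}$ — more precisely one must be careful that the target $\ell_{p_2}$-norm only "sees" the $2^{td}$ coefficients of the containing cube, so the exponent works out to $\frac1{u_2}-\frac{p_1}{u_1 p_2}$ after also accounting for how the source quasi-norm constrains the coefficients. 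This exponent is $\le0$ exactly when $\frac{p_2}{u_2}\le\frac{p_1}{u_1}$, giving $\|\id_j\|=1$, and $>0$ otherwise, giving the two-sided bound $2^{jd(\frac1{u_2}-\frac{p_1}{u_1 p_2})}$ up to constants.

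For the lower bounds I would exhibit explicit near-extremal sequences: in the growing cases, take $\lambda$ supported on all $2^{jd}$ cubes inside $Q$ with suitably chosen (constant or single-atom) values so that the target quasi-norm is realized by the coarsest scale $\nu=0$ while the source quasi-norm stays controlled — concretely, either all coefficients equal to $1$ (when the worst target scale is $\nu=0$ and the source is dominated by a finer scale) or a single coefficient equal to $1$, whichever makes the ratio largest. This pins down $\|\id_j\|$ from below by the claimed power, matching the upper bound up to a constant independent of $j$ in the last case and exactly in the others. The main obstacle I expect is bookkeeping: correctly identifying, in the mixed regime $p_1<p_2$, $\frac{p_2}{u_2}>\frac{p_1}{u_1}$, which scale $\nu$ simultaneously maximizes the target-to-source ratio — one has to check that intermediate scales $0<\nu<j$ never beat the endpoints, which follows from the monotonicity (affinity in $t$) of the exponent of $2^{dt}$, but the precise form of the exponent $\frac1{u_2}-\frac{p_1}{u_1 p_2}$ (rather than a naive $\frac1{u_2}-\frac1{u_1}$) requires care about how the source-space supremum over $\nu$ and the target-space supremum over $\nu$ interact, since they are taken independently. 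Once that interaction is correctly analyzed, the three exact cases and the fourth two-sided case fall out directly, and the constant $c\le1$ in \eqref{1503-a} comes simply from the equivalence constants in $\mmb$ and the crude estimate on $\ell_{p_1}^M\hookrightarrow\ell_{p_2}^M$.
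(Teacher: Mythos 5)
First, note that the paper itself does not prove this lemma; it is quoted from \cite{HaSk-sm}. Judging your plan on its own merits: the reduction to block-norms over scales $t=j-\nu$ and the treatment of the two cases with $p_1\ge p_2$ (including the extremal sequences: a single atom for $u_2\ge u_1$, the all-ones sequence for $u_2<u_1$) are correct. The gap lies entirely in the cases $p_1<p_2$, which are precisely the cases that produce the non-classical exponent. For the upper bound you say one "picks up the dimension factor $M^{1/p_2-1/p_1}$" on the target block; but for $p_1<p_2$ the inequality $\|x\|_{p_2}\le M^{1/p_2-1/p_1}\|x\|_{p_1}$ is false (it reverses), and the trivial bound $\|x\|_{p_2}\le\|x\|_{p_1}$ yields the exponent $\frac1{u_2}-\frac1{p_2}-\frac1{u_1}+\frac1{p_1}$, which can be positive even when $\frac{p_2}{u_2}\le\frac{p_1}{u_1}$ (e.g.\ $p_1=1,u_1=2,p_2=2,u_2=4$), so neither the value $1$ in the second case nor the upper bound in \eqref{1503-a} follows from your bookkeeping. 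The missing ingredient is the interpolation $\sum_m|\lambda_m|^{p_2}\le\|\lambda\|_\infty^{p_2-p_1}\sum_m|\lambda_m|^{p_1}$ combined with the fact that the \emph{source} quasi-norm controls $\|\lambda\|_\infty$ through its finest-scale ($\nu=j$) blocks; this is exactly the "interaction between the two suprema" you flag but do not resolve, and it is what produces $\frac1{u_2}-\frac{p_1}{u_1p_2}$ rather than $\frac1{u_2}-\frac1{u_1}$.

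The lower bound in the fourth case is also not reachable by your proposed extremal sequences: the all-ones sequence gives only the ratio $2^{jd(\frac1{u_2}-\frac1{u_1})}$ and a single atom gives $1$, both strictly smaller than $2^{jd(\frac1{u_2}-\frac{p_1}{u_1p_2})}$ when $p_1<p_2$ and $\frac{p_2}{u_2}>\frac{p_1}{u_1}$. What is needed is a sequence with $k_j\sim 2^{jd(1-\frac{p_1}{u_1})}$ coefficients equal to $1$, distributed in a self-similar (Cantor-like) way so that every dyadic cube $Q_{\nu,k}$ contains about $2^{(j-\nu)d(1-\frac{p_1}{u_1})}$ of them; then each source block-norm is $\sim1$ while the coarsest target block gives $2^{jd(\frac1{u_2}-\frac{p_1}{u_1p_2})}$. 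This is precisely the sequence $\lambda^{(j)}$ that the present paper imports from \cite{HaSk-sm} in Step 3 of the proof of Theorem~\ref{reallymorreycase}, and it explains why the constant $c$ in \eqref{1503-a} is only $\le 1$ rather than $=1$. Without these two ingredients the plan does not establish the second and fourth cases of the lemma.
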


Using Sch\"utt's result \eqref{Schuett} and \eqref{Pietsch} together with some embedding arguments we obtained in \cite{HaSk-sm} first entropy number estimates.

\begin{lemma}[cf. \cite{HaSk-sm}]\label{lemma1503}
Let $j\in\nat$, $0<p_i\leq u_i<\infty$, $i=1,2$, and $k\in \no$ with $k\gtrsim 2^{jd}$. 
Then
\begin{equation}\label{1503-1}
e_k(\id_j:\mmbet \rightarrow \mmbzt) \sim  2^{-k2^{-jd}} \ 2^{jd\left(\frac{1}{u_2}-\frac{1}{u_1}\right)}.
\end{equation}
\end{lemma}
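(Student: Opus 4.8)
\textbf{Proof plan for Lemma \ref{lemma1503}.}

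The plan is to reduce the entropy numbers of $\id_j:\mmbet\to\mmbzt$ to those of a finite-dimensional $\ell_{p_1}^N\to\ell_{p_2}^N$ identity, for which Sch\"utt's asymptotics \eqref{Schuett}--\eqref{Pietsch} apply. First I observe that $\mmb$ is a finite-dimensional space: the only indices that enter are $m$ with $Q_{j,m}\subset Q$, so $\dim\mmb = N$ with $N\sim 2^{jd}$. The key structural point is that the quasi-norm of $\mmb$ is equivalent to a weighted $\ell_{p}$-type quasi-norm on $\C^N$, but with a nontrivial "Morrey" supremum built in. The strategy is to sandwich $\mmb$ between two plain $\ell_p^N$-spaces (with appropriate normalising factors) so that the composition of the embeddings recovers $\id_j$ up to the constants governed by $\|\id_j\|$ from Lemma~\ref{lemma15030}.

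Concretely, I would argue as follows. For the upper estimate, factor $\id_j$ through $\ell_{p_1}^N$ and $\ell_{p_2}^N$: one has continuous maps $\mmbet\hookrightarrow \ell_{p_1}^N(\text{weighted})$ (taking the worst scale $\nu=0$, i.e.\ the coarsest cube, in the Morrey supremum gives the dominant term up to a factor $2^{jd(1/u_i-1/p_i)}$-type) and then $\ell_{p_2}^N(\text{weighted})\hookrightarrow\mmbzt$, with the norms of these two maps and of the intermediate $\ell_{p_1}^N\to\ell_{p_2}^N$ multiplying up — via multiplicativity \eqref{e-multi} — to the right-hand side $2^{jd(1/u_2-1/u_1)}$ (matching $\|\id_j\|$ in the generic case of Lemma~\ref{lemma15030}). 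Since $k\gtrsim 2^{jd}\sim N$, we are in the second or third regime of \eqref{Schuett}, and in fact for $k\gtrsim N\log\cdots$ the behaviour is purely the exponential one $2^{-k/(2N)}N^{1/p_2-1/p_1}$; the factor $N^{1/p_2-1/p_1}\sim 2^{jd(1/p_2-1/p_1)}$ combines with the embedding-norm factors to produce exactly $2^{-k2^{-jd}}\,2^{jd(1/u_2-1/u_1)}$. For the lower estimate, I would go the other way: $\mmbet$ contains an isometric (up to constants) copy of a weighted $\ell_{p_1}^{N'}$ with $N'\sim N$ — e.g.\ by restricting to sequences supported on the finest level so the Morrey supremum trivialises — and $\mmbzt$ is continuously contained in a weighted $\ell_{p_2}^{N'}$, so that $e_k(\id_j)\gtrsim e_k(\ell_{p_1}^{N'}\to\ell_{p_2}^{N'})$ with matching weights, and \eqref{Pietsch} (if $p_1>p_2$) or the third regime of \eqref{Schuett} (if $p_1<p_2$) gives the matching lower bound.

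The main obstacle is bookkeeping the weights in the Morrey supremum correctly across all four cases of Lemma~\ref{lemma15030}, i.e.\ making sure that the two sandwiching $\ell_p$-embeddings carry exactly the operator norm $\|\id_j\|$ and not more, so that the product in the upper bound is sharp and agrees with the lower bound. In particular the cases where $\|\id_j\|=1$ versus $\|\id_j\|=2^{jd(1/u_2-1/u_1)}$ versus the two-sided estimate \eqref{1503-a} must all be shown to yield the same asymptotics $2^{-k2^{-jd}}2^{jd(1/u_2-1/u_1)}$ for the entropy numbers once $k\gtrsim 2^{jd}$ — which works because in the exponential regime the dominant decay is $2^{-k/(2N)}$ and the polynomial-in-$N$ prefactor is exactly the combination that matches. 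One should also be slightly careful that "$k\gtrsim 2^{jd}$" is enough to sit in the regime of \eqref{Schuett} where the clean exponential form holds (strictly it is the regime $k\ge 2N$, so one may need $k\ge c\,2^{jd}$ with a suitable $c$, or invoke the doubling/monotonicity of entropy numbers to absorb the logarithmic transition zone), but this is a routine adjustment.
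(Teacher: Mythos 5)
Your overall strategy --- finite dimension $N\sim 2^{jd}$, sandwiching $\mmb$ between plain $\ell_q^N$-spaces, Sch\"utt's asymptotics \eqref{Schuett}--\eqref{Pietsch} and multiplicativity \eqref{e-multi} --- is the right one; it is how the paper argues in the proof of Lemma~\ref{l:u2p1} and how the cited source \cite{HaSk-sm} proceeds. But the specific sandwich you choose, namely $\ell_{p_1}^N\to\ell_{p_2}^N$ on both sides, does not close: the two one-sided estimates it produces differ by a genuine power of $2^{jd}$, not by a constant. The available embeddings with controlled norms are (cf.\ \eqref{balls}) $\|\id:\ell_{u}^N\to\mmb\|\le 1$, $\|\id:\mmb\to\ell_{p}^N\|\le 2^{jd(\frac1p-\frac1u)}$ and $\|\id:\ell_{p}^N\to\mmb\|\le 1$. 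Hence your upper bound $\mmbet\to\ell_{p_1}^N\to\ell_{p_2}^N\to\mmbzt$ yields the prefactor
\[
2^{jd(\frac{1}{p_1}-\frac{1}{u_1})}\, N^{\frac{1}{p_2}-\frac{1}{p_1}}\cdot 1 \;=\; 2^{jd(\frac{1}{p_2}-\frac{1}{u_1})},
\]
which exceeds the claimed $2^{jd(\frac{1}{u_2}-\frac{1}{u_1})}$ by the factor $2^{jd(\frac{1}{p_2}-\frac{1}{u_2})}$ whenever $p_2<u_2$, while your lower bound via a copy of $\ell_{p_1}^{N'}$ in the source gives at best $2^{-k2^{-jd}}2^{jd(\frac{1}{u_2}-\frac{1}{p_1})}$, short by $2^{jd(\frac{1}{p_1}-\frac{1}{u_1})}$ whenever $p_1<u_1$. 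No choice of scalar weights on the $\ell_{p_i}^N$ repairs this, since a rescaling cancels in the product of operator norm times entropy number. So your assertion that ``the polynomial-in-$N$ prefactor is exactly the combination that matches'' fails for this factorisation, as does the identification of the prefactor with $\|\id_j\|$: by Lemma~\ref{lemma15030} the norm equals $2^{jd(\frac{1}{u_2}-\frac{1}{u_1})}$ in only one of the four cases, whereas the entropy prefactor is always that quantity.

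The missing idea is that the two sides of the sandwich must use \emph{different} Lebesgue exponents. For the upper bound factor as $\mmbet\to\ell_{p_1}^N\to\ell_{u_2}^N\to\mmbzt$ (leave the source through $\ell_{p_1}$, but enter the target through $\ell_{u_2}$, whose embedding into $\mmbzt$ has norm one), giving $2^{jd(\frac{1}{p_1}-\frac{1}{u_1})}\,2^{-k2^{-jd}}N^{\frac{1}{u_2}-\frac{1}{p_1}}=2^{-k2^{-jd}}2^{jd(\frac{1}{u_2}-\frac{1}{u_1})}$. For the lower bound factor $\ell_{u_1}^N\to\mmbet\to\mmbzt\to\ell_{p_2}^N$ (the unit ball of $\ell_{u_1}^N$ sits inside that of $\mmbet$), giving $e_k(\id_j)\ge 2^{-jd(\frac{1}{p_2}-\frac{1}{u_2})}\,2^{-k2^{-jd}}N^{\frac{1}{p_2}-\frac{1}{u_1}}$, which is again the claimed quantity. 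Your description of the lower-bound subspace as an ``isometric copy of a weighted $\ell_{p_1}^{N'}$ supported on the finest level'' is also not meaningful here: $\mmb$ lives on the single level $j$, and the relevant isometric-up-to-constants subset of the unit ball is that of $\ell_{u_1}^N$, not of $\ell_{p_1}^{N'}$. Your remarks on the transition zone $k\sim 2^{jd}$ between the second and third regimes of \eqref{Schuett} are correct and unproblematic.
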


Now we are able to extend this in some sense. Recall that for a bounded subset $K$ of a finite-dimensional (quasi)-Banach space $Y$, the $k$-th entropy number $e_k(K,Y)$ is defined as 
\begin{equation*}
e_k(K,Y) = \min\{\varepsilon>0 : \; K\subset \bigcup_{i=1}^{2^{k-1}} (x_i+\varepsilon B_Y))\; \text{for some} \; x_1, \ldots x_{2^{k-1}}\in Y\}\, .
\end{equation*}  
If $X$ denotes the vector space $Y$ equipped with another (quasi)-norm, then 
$e_k(B_X,Y) \,=\, e_k(\id: X\rightarrow Y)$.
One can also easily check that 
$e_k(rK,Y) = r e_k(K,Y)$
if $r>0$.

\begin{lemma}\label{l:u2p1}
Let $j\in\nat$, $0<p_2\leq u_2<p_1\le u_1<\infty$  and $k\in \no$. 
Then
\begin{equation}\label{1503-2}
e_k(\id_j:\mmbet \rightarrow \mmbzt) \sim  2^{-k2^{-jd}} \ 2^{jd\left(\frac{1}{u_2}-\frac{1}{u_1}\right)}.
\end{equation}
\end{lemma}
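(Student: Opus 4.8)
The plan is to reduce the statement in Lemma~\ref{l:u2p1} to the already-established estimate of Lemma~\ref{lemma1503} by handling separately the two regimes $k\gtrsim 2^{jd}$ and $k\lesssim 2^{jd}$. In the first regime, $k\gtrsim 2^{jd}$, there is nothing to do: \eqref{1503-1} is exactly \eqref{1503-2}, so the claim already holds. The remaining task is the range $1\le k\lesssim 2^{jd}$, where I want to show both sides are $\sim 2^{jd(\frac{1}{u_2}-\frac{1}{u_1})}$ (since $2^{-k2^{-jd}}\sim 1$ there). The key structural fact that makes this case special is the hypothesis $p_2\le u_2<p_1\le u_1$, which forces $\frac{p_2}{u_2}\le 1<\frac{p_1}{u_1}$ is false --- rather one checks $\frac{p_2}{u_2}>\frac{p_1}{u_1}$ fails too; in fact here $p_1>u_2\ge p_2$ and the relevant case of Lemma~\ref{lemma15030} is the third one, $p_1\ge p_2$ and $u_2<u_1$, giving $\|\id_j\|=2^{jd(\frac{1}{u_2}-\frac{1}{u_1})}$.

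First I would establish the upper bound for $1\le k\lesssim 2^{jd}$. Since $e_1(\id_j)\le \|\id_j\| = 2^{jd(\frac{1}{u_2}-\frac{1}{u_1})}$ by Lemma~\ref{lemma15030} (third case), and entropy numbers are non-increasing in $k$, we get $e_k(\id_j)\le 2^{jd(\frac{1}{u_2}-\frac{1}{u_1})}$ for all $k\ge 1$, which is the desired bound (up to the constant $2^{-k2^{-jd}}\sim 1$). For the lower bound I would use a volume/monotonicity argument: either invoke the general principle that for a compact identity between $N$-dimensional spaces one has $e_k\gtrsim c\,\|\id_j\|$ for $k$ in a range like $1\le k\le \log(2N)$ with $N\sim 2^{jd}$ (this is the first branch of \eqref{Schuett} transferred to this setting via suitable embeddings into and out of $\ell_p^N$-type spaces), or more directly factor through one-dimensional subspaces. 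Concretely, picking a single fixed index $m_0$ with $Q_{j,m_0}\subset Q$, the restriction of $\id_j$ to the one-dimensional span of $e_{m_0}$ has norm comparable to $2^{jd(\frac{1}{u_2}-\frac{1}{u_1})}$ by direct computation of the two $\mmb$-quasi-norms on such an atom, and this already yields $e_k(\id_j)\gtrsim 2^{jd(\frac{1}{u_2}-\frac{1}{u_1})}$ for $k=1$; to extend this to all $k\lesssim 2^{jd}$ I would instead use that $\mmbet$ and $\mmbzt$ both embed (with constants independent of $j$, up to the normalisation factors) into chains of $\ell_p$-spaces on $N\sim 2^{jd}$ coordinates, so that \eqref{Schuett} applied to $\id:\ell_{p_1}^N\to\ell_{p_2}^N$ in the middle range, combined with the norm computation \eqref{1503-a}/\eqref{1503-0}, produces the lower bound of the correct order.

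A cleaner route for the lower bound, which I would prefer to present, is to exploit the embedding structure noted in Remark~\ref{seq-n-b}: one has $\tilde b^{\sigma_1}_{u_1}$-type monotonicity into $\mmbet$ and a Hölder embedding of $\mmbzt$ into $\tilde b^{\sigma_2}_{p_2}$-type spaces at the level of fixed $j$, i.e. $\ell_{u_1}^N\hookrightarrow\mmbet$ and $\mmbzt\hookrightarrow \ell_{p_2}^N$ with $N\sim 2^{jd}$ and appropriate scaling powers of $2^{jd}$ absorbed into the quasi-norms. Then by multiplicativity of entropy numbers the lower bound $e_k(\id_j)\gtrsim 2^{jd(\cdots)} e_k(\ell_{u_1}^N\hookrightarrow\ell_{p_2}^N)$ and the latter is $\sim 1$ for $1\le k\le\log(2N)\sim jd$ by \eqref{Schuett}; matching the powers of $2^{jd}$ with $\|\id_j\|$ from Lemma~\ref{lemma15030} closes the gap. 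Combined with the upper bound this gives \eqref{1503-2} in the range $1\le k\lesssim 2^{jd}$, and together with Lemma~\ref{lemma1503} for $k\gtrsim 2^{jd}$ the proof is complete.

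I expect the main obstacle to be bookkeeping of the normalisation exponents: making sure that the $u$-normalised characteristic functions $\chi^{(u)}_{\nu,m}$ interact correctly with the supremum over dyadic sub-cubes $Q_{\nu,k}\subset Q_{j,m}$ in the definition of $\|\cdot\,|\mmb\|$, so that the embeddings into and out of the model $\ell_p^N$-spaces carry exactly the factor $2^{jd(\frac{1}{u_2}-\frac{1}{u_1})}$ (and not some other power of $2^{jd}$), consistent with Lemma~\ref{lemma15030}. Once the norm identities at level $j$ are pinned down, the entropy-number estimates follow mechanically from Schütt's result \eqref{Schuett} and multiplicativity \eqref{e-multi}.
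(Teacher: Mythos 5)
Your overall architecture --- sandwiching the unit ball of $\mmb$ between $\ell_u^N$- and $\ell_p^N$-balls ($N=2^{jd}$), invoking the known entropy numbers of finite-dimensional $\ell_p$-embeddings, and bookkeeping the norms from Lemma~\ref{lemma15030} via multiplicativity --- is exactly the paper's, and your upper bound ($e_k(\id_j)\le\|\id_j\|=2^{jd(\frac1{u_2}-\frac1{u_1})}$ for $k\lesssim 2^{jd}$, Lemma~\ref{lemma1503} for $k\gtrsim 2^{jd}$) is sound. But the lower bound contains a genuine error: you assert that $e_k\big(\ell_{u_1}^N\hookrightarrow\ell_{p_2}^N\big)\sim 1$ for $1\le k\le\log 2N$ ``by \eqref{Schuett}''. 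The first branch of \eqref{Schuett} applies only when the source exponent does not exceed the target exponent; here the hypothesis $p_2\le u_2<p_1\le u_1$ forces $p_2<u_1$, so the relevant asymptotics is \eqref{Pietsch}, namely $e_k\big(\ell_{u_1}^N\hookrightarrow\ell_{p_2}^N\big)\sim 2^{-k/(2N)}\,N^{\frac1{p_2}-\frac1{u_1}}$ for \emph{all} $k$, which in your range is of order $2^{jd(\frac1{p_2}-\frac1{u_1})}\gg 1$. This is not cosmetic: dividing by $\|\id:\mmbzt\to\ell_{p_2}^N\|=2^{jd(\frac1{p_2}-\frac1{u_2})}$, your value $\sim 1$ yields only $e_k(\id_j)\gtrsim 2^{-jd(\frac1{p_2}-\frac1{u_2})}\le 1$, falling short of the claimed $2^{jd(\frac1{u_2}-\frac1{u_1})}$ by the factor $2^{jd(\frac1{p_2}-\frac1{u_1})}$; the powers of $2^{jd}$ do not ``match'' as asserted. (Your fallback principle $e_k(\id_j)\gtrsim\|\id_j\|$ for $k\le\log 2N$ for a general pair of quasi-norms is likewise not justified and is not among the paper's tools; similarly, invoking the ``middle range'' of \eqref{Schuett} for $\ell_{p_1}^N\to\ell_{p_2}^N$ is out of place because $p_2<p_1$ here.)

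The repair is precisely the observation you circle around but do not exploit: the hypothesis $u_2<p_1$ places \emph{both} relevant finite-dimensional embeddings, $\ell_{p_1}^N\to\ell_{u_2}^N$ (upper bound) and $\ell_{u_1}^N\to\ell_{p_2}^N$ (lower bound), in the regime of \eqref{Pietsch}, whose single formula holds for all $k$. Inserting $e_k(\ell_{u_1}^N\to\ell_{p_2}^N)\sim2^{-k2^{-jd}}2^{jd(\frac1{p_2}-\frac1{u_1})}$ into your own factorisation $\ell_{u_1}^N\hookrightarrow\mmbet\to\mmbzt\hookrightarrow\ell_{p_2}^N$, with outer norms $1$ and $2^{jd(\frac1{p_2}-\frac1{u_2})}$ coming from the inclusions $B_{u}(0,1)\subset B_{u,p}(0,1)\subset B_{p}(0,2^{jd(\frac1p-\frac1u)})$ and Lemma~\ref{lemma15030}, gives $e_k(\id_j)\gtrsim 2^{-k2^{-jd}}2^{jd(\frac1{u_2}-\frac1{u_1})}$ for all $k$ simultaneously; the case split $k\lessgtr 2^{jd}$, and with it the appeal to Lemma~\ref{lemma1503}, then becomes unnecessary. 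This corrected version is in substance the paper's proof.
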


\begin{proof}
Let $B_q(0,r)$, $r>0$, denote the ball of radius $r$ centred at the origin in the space $\ell_q^{2^{jd}}$ and $B_{u,p}(0,r)$ the corresponding ball in the space $\mmb$. 
Directly from the definition of the norm in the space $\mmbet$ it follows that
\begin{equation}\label{balls}
B_{u}(0,1) \subset B_{u,p}(0,1)\subset B_{p}(0,2^{jd(\frac{1}{p}-\frac{1}{u})}).
\end{equation} 
In consequence,  if $q< p_1\le u_1$, then 
\begin{align}\label{ebb}
e_k(B_{u_1,p_1}(0,1),\ell_q) \ge & \ e_k(B_{u_1}(0,1),\ell_q^{2^{jd}}) \sim 2^{-k2^{-jd}} \ 2^{jd\left(\frac{1}{q}-\frac{1}{u_1}\right)}\\
\intertext{and} 
e_k\big(B_{u_1,p_1}(0,1),\ell_q^{2^{jd}}\big) \le & \ e_k\big(B_{p_1}(0,2^{jd(\frac{1}{p_1}-\frac{1}{u_1})}),\ell_q^{2^{jd}}\big) \nonumber\\
=  & \   
2^{jd(\frac{1}{p_1}-\frac{1}{u_1})} e_k\big(B_{p_1}(0,1),\ell_q^{2^{jd}}\big) \nonumber\\
\sim &\ 2^{-k2^{-jd}}  2^{jd\left(\frac{1}{q}-\frac{1}{u_1}\right)}.\label{eba}
\end{align}
Now, taking $q=u_2$ in \eqref{eba}, we obtain
\begin{align}
  e_k(\id_j:\mmbet \rightarrow \mmbzt)
  & \le e_k(\id_j:\mmbet \rightarrow \ell_{u_2}^{2^{jd}}) \|\id: \ell_{u_2}^{2^{jd}} \rightarrow 
 \mmbzt\| \nonumber\\
 &\sim 2^{-k2^{-jd}}  2^{jd\left(\frac{1}{u_2}-\frac{1}{u_1}\right)}  \label{eba1}
\end{align} 
since $\|\id: \ell_{u_2}^{2^{jd}} \rightarrow  \mmbzt\|=1$, cf. Lemma~\ref{lemma15030}. 
On the other hand, taking $q=p_2$ in \eqref{ebb}, we get 
\begin{align}
  c 2^{-k2^{-jd}}  2^{jd\left(\frac{1}{p_2}-\frac{1}{u_1}\right)} & \le e_k(\id_j:\mmbet \rightarrow \ell_{p_2}^{2^{jd}}) \nonumber\\
\nonumber  & \le e_k(\id_j:\mmbet \rightarrow \mmbzt) \|\id: \mmbzt  \rightarrow \ell_{p_2}^{2^{jd}}\| \\ & \sim  2^{jd\left(\frac{1}{p_2}-\frac{1}{u_2}\right)} e_k(\id_j:\mmbet \rightarrow \mmbzt) , \label{ebb1}
\end{align} 
 in view of $\|\id:\mmbzt  \rightarrow \ell_{p_2}^{2^{jd}} \|=  2^{jd\left(\frac{1}{p_2}-\frac{1}{u_2}\right)}$, cf. Lemma~\ref{lemma15030}. This proves \eqref{1503-2}. 
\end{proof}

The next result is well-known in the Banach case,  cf.  \cite{Carl81}, \cite[p.63]{pisier} or 
\cite[Prop.~II.1.3]{EE}. The extension to the  quasi-Banach case is standard.

\begin{lemma}\label{dimN}
Let $(E, \|\cdot\|)$ be an $N$-dimensional complex quasi-Banach space. Then 
\[
e_k(\id: E\rightarrow E) \sim 
\begin{cases} 1 & \text{if}\qquad k\le 2N, \\
2^{-\frac{k-1}{2N}} & \text{if}\qquad k > 2N. 
\end{cases}
\] 
\end{lemma}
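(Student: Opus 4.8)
The statement to be proved is Lemma~\ref{dimN}: for an $N$-dimensional complex quasi-Banach space $E$, the entropy numbers of the identity $\id:E\to E$ satisfy $e_k(\id)\sim 1$ for $k\le 2N$ and $e_k(\id)\sim 2^{-(k-1)/2N}$ for $k>2N$.

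\medskip

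\emph{Plan of proof.} The estimate is a volume-counting argument, entirely independent of the particular quasi-norm up to equivalence constants (which is why the $\sim$ is uniform only in the dimension $N$, not in $E$). First I would reduce to the case of a genuine norm: by the Aoki--Rolewicz theorem every quasi-norm on $E$ is equivalent to a $\rho$-norm for some $\rho\in(0,1]$, and since $E$ is finite-dimensional all norms (and $\rho$-norms) are equivalent, so $\id:E\to E$ up to fixed multiplicative constants behaves like the identity on $(\C^N,\|\cdot\|_2)\cong(\R^{2N},\|\cdot\|_2)$; the constants absorbed here depend only on $\rho$ and on $E$, but for the final statement we only claim $\sim$, and the standard reference result is stated for a fixed equivalence. (In fact the cleanest route is to invoke the known Banach-space result, e.g. \cite[Prop.~II.1.3]{EE} or \cite{Carl81}, for the real Hilbert space $\R^{2N}$ and then transfer; the quasi-Banach extension is exactly this reduction, as the remark preceding the lemma notes.)

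\medskip

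For $k\le 2N$: the lower bound $e_k(\id)\gtrsim 1$ is immediate because $\id(B_E)=B_E$ cannot be covered by fewer than $2^{2N}$ balls of radius, say, $1/2$ — a standard packing/volume bound in real dimension $2N$ shows one needs at least $(1/\varepsilon)^{2N}\cdot(\text{const})$ balls of radius $\varepsilon$ to cover $B_E$, hence for $\varepsilon$ bounded away from some fixed small constant and $2^{k-1}\le 2^{2N-1}$ balls this fails; and the upper bound $e_k(\id)\le 1$ is trivial since one ball of radius $1$ covers $B_E$. So both bounds give $e_k\sim 1$ on this range.

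\medskip

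For $k>2N$: the upper bound comes from iterating the observation that $B_E$ can be covered by at most $C\cdot(1/\varepsilon)^{2N}$ translates of $\varepsilon B_E$ for any $\varepsilon\in(0,1)$ — choose $\varepsilon$ so that $C\,\varepsilon^{-2N}\le 2^{k-1}$, i.e. $\varepsilon\sim (C/2^{k-1})^{1/2N}\sim 2^{-(k-1)/2N}$, which yields $e_k(\id)\lesssim 2^{-(k-1)/2N}$. The lower bound again uses the volume argument in the reverse direction: if $B_E$ is covered by $2^{k-1}$ balls of radius $\varepsilon$, comparing $2N$-dimensional Lebesgue volumes gives $\mathrm{vol}(B_E)\le 2^{k-1}\varepsilon^{2N}\mathrm{vol}(B_E)$, hence $\varepsilon\ge 2^{-(k-1)/2N}$, so $e_k(\id)\gtrsim 2^{-(k-1)/2N}$. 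Combining, $e_k(\id)\sim 2^{-(k-1)/2N}$ for $k>2N$.

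\medskip

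\emph{Main obstacle.} There is no real obstacle — the result is classical and elementary; the only point requiring a word of care is the passage from the Banach to the quasi-Banach setting, where one must check that the quasi-triangle inequality constant of $E$ enters only into the $\sim$-constants and not into the exponent. This is handled by the Aoki--Rolewicz reduction together with finite-dimensionality, exactly as in the references cited before the lemma, so I would simply invoke that standard extension rather than reprove the volume estimates from scratch.
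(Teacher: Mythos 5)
Your volume-counting argument in real dimension $2N$ is correct and is essentially the proof the paper itself has in mind (a maximal $K\varepsilon$-separated set plus a Lebesgue-volume comparison, with the quasi-triangle constant $K$ entering only the equivalence constants). One small caution: the preliminary reduction via ``all norms on a finite-dimensional space are equivalent'' would make the constants depend on $E$ itself, which would not suffice for the later applications where $E=\mmb$ varies with $j$; but your direct packing/covering argument avoids this and yields constants depending only on the quasi-triangle constant, as required.
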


\ignore{
\begin{proof}
Let $B(0,r)$ denote a ball of radius $r$ in $E$ and  $K$  the constant  in the quasi-triangle inequality. 
Let $\mu$ be the Lebesgue measure on $E=\R^{2N}$ such that $\mu\big(B(0,1)\big)=1$. For given $\varepsilon$, $0<\varepsilon\le 1$, let $I_\varepsilon$ be a maximal family of points $x_1,\ldots , x_m$ in  $B(0,1)$ such that  $\|x_i-x_j\|\ge K\varepsilon$ if $i\not= j$. Then the balls $x_i+B(0,\varepsilon/2)$ are pairwise disjoint.  
Moreover any ball $x_i+B(0,\varepsilon/2)$ is contained in $B(0,2K)$.  So by the volume argument we get 
\[ m \cdot \big(\varepsilon/2\big)^{2N} \le \big(2K\big)^{2N}. 
\]
For sufficiently large $k$, $k\ge 2N + \log 4K$,  we choose $\varepsilon = 4K 2^{-(k-1)/2N}$.  
We get $m\le 2^{k-1}$. So $2^{k-1}$ balls of radius $K\varepsilon$ cover the unit ball. If $k$ is small, then $e_k \le 1 =\|\id\| $. This proves the lemma.  
\end{proof}
}
This leads to the following operator ideal estimate.

\begin{lemma}\label{idealmm}
Let $0< p_i\le u_i<\infty$, $i=1,2$, and $j\in \no$ be given. Assume that 
$p_1\ge p_2$,  or $p_1 < p_2$  and $\frac{p_2}{u_2}\le \frac{p_1}{u_1}$. Let $\frac{1}{u}=\frac{1}{u_1}-\frac{1}{u_2}$. Then
\begin{equation} \label{idelmm1}
\lrie (\id : \,\mmbet \rightarrow \mmbzt) \sim 
2^{jd(\frac 1r - \frac{1}{u})} \quad  \mbox{if}\quad \frac 1 r > \max \Big(0, \frac{1}{u}\Big).
\end{equation}  
\end{lemma}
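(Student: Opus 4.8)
The plan is to derive \eqref{idelmm1} by combining the entropy number asymptotics for $\id_j$ already established with the elementary behaviour of entropy numbers on a fixed finite-dimensional space, exactly in the spirit of Carl's inequality linking the $L^{(e)}_{r,\infty}$-quasi-norm to a uniform entropy estimate. First I would observe that under the stated hypotheses ($p_1\ge p_2$, or $p_1<p_2$ with $\frac{p_2}{u_2}\le\frac{p_1}{u_1}$) Lemma~\ref{lemma15030} gives $\|\id_j\|=1$, so in particular $e_k(\id_j)\le 1$ for all $k$; and for $k\gtrsim 2^{jd}$ Lemma~\ref{lemma1503} gives $e_k(\id_j:\mmbet\to\mmbzt)\sim 2^{-k2^{-jd}}\,2^{jd(1/u_2-1/u_1)} = 2^{-k2^{-jd}}\,2^{-jd/u}$. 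This provides the two regimes of decay of $e_k(\id_j)$ that I need: essentially constant ($\approx 1$) for $k\lesssim 2^{jd}$, and geometric decay $2^{-k2^{-jd}}2^{-jd/u}$ for $k\gtrsim 2^{jd}$, which is precisely the shape in Lemma~\ref{dimN} up to the fixed factor $2^{-jd/u}$ coming from $\|\id_j\|$ being measured between two different norms on the same $2^{jd}$-dimensional space.

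Next I would compute $\sup_{n\in\N} n^{1/r} e_n(\id_j)$ by splitting the supremum at $n\sim 2^{jd}$. For $n\lesssim 2^{jd}$ one has $e_n(\id_j)\lesssim 1$, so $n^{1/r}e_n(\id_j)\lesssim n^{1/r}\le (2^{jd})^{1/r}=2^{jd/r}$, and this bound is attained up to constants near $n\sim 2^{jd}$ since there $e_n(\id_j)\sim 2^{-jd/u}$ giving $n^{1/r}e_n(\id_j)\sim 2^{jd/r}2^{-jd/u}=2^{jd(1/r-1/u)}$ — note the hypothesis $\frac1r>\frac1u$ (when $\frac1u>0$) and $\frac1r>0$ in general is what makes $n^{1/r}$ grow fast enough that the maximum sits at the transition point rather than at small $n$. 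For $n\gtrsim 2^{jd}$, write $n = t\,2^{jd}$ with $t\gtrsim 1$; then $n^{1/r}e_n(\id_j)\sim (t2^{jd})^{1/r}2^{-t}2^{-jd/u} = 2^{jd(1/r-1/u)}\, t^{1/r}2^{-t}$, and since $t^{1/r}2^{-t}$ is bounded over $t\ge c$ the whole expression is $\lesssim 2^{jd(1/r-1/u)}$, matching the lower contribution. Hence $\lrie(\id_j)\sim 2^{jd(1/r-1/u)}$, which is \eqref{idelmm1}.

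The two small technical points to be careful about are: the case $\frac1u\le 0$ (i.e. $u_1\le u_2$), where $\frac1u=\frac1{u_1}-\frac1{u_2}$ may be nonpositive and the hypothesis degenerates to $\frac1r>0$; here $2^{jd(1/r-1/u)}$ still makes sense and one checks the same splitting argument works, with the decay factor $2^{-jd/u}\ge 1$ absorbed into constants uniformly in $j$ only after noting it multiplies a geometrically decaying term — actually the cleaner route is to invoke Lemma~\ref{dimN} directly: since $\mmbet$ and $\mmbzt$ are both $2^{jd}$-dimensional, factor $\id_j = \id_j\circ\id_{\mmbet}$ and use multiplicativity together with $\|\id_j\|\sim 2^{-jd/u}$ to transfer the sharp two-regime estimate of Lemma~\ref{dimN}. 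The main (and only real) obstacle is bookkeeping the constants so that they do not depend on $j$: the equivalence constants in Lemmas~\ref{lemma1503} and \ref{dimN} are $j$-independent, the constant $\|\id_j\|$ is computed exactly in Lemma~\ref{lemma15030}, and the supremum over $n$ is controlled by the single function $t\mapsto t^{1/r}2^{-t}$ whose maximum is an absolute constant — so uniformity follows, but this needs to be stated rather than left implicit. Everything else is the routine optimisation sketched above.
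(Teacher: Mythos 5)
Your argument has a genuine gap in the upper estimate, and it occurs precisely in the main case in which the lemma is applied. Suppose $u_1<u_2$, so that $\frac1u=\frac{1}{u_1}-\frac{1}{u_2}>0$ and the target bound $2^{jd(\frac1r-\frac1u)}=2^{jd/r}\,2^{-jd/u}$ is \emph{exponentially smaller} than $2^{jd/r}$. In the intermediate range $\log 2^{jd}\lesssim n\lesssim 2^{jd}$ the only control you invoke is $e_n(\id_j)\le\|\id_j\|=1$, which yields $n^{1/r}e_n(\id_j)\lesssim 2^{jd/r}$ --- too large by the factor $2^{jd/u}$. Your assertion that ``the maximum sits at the transition point'' is not a consequence of the bounds you have quoted: monotonicity and $\|\id_j\|=1$ are perfectly consistent with $e_n(\id_j)\sim 1$ throughout $n\lesssim 2^{jd}$, in which case the supremum would be $\sim 2^{jd/r}$, not $2^{jd(\frac1r-\frac1u)}$. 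What is missing is an actual decay estimate for $e_n(\id_j)$ already for $n\ge\log 2^{jd}$. The paper supplies it by factoring $\id_j$ through $\ell_{u_2}^{2^{jd}}$, using $B_{u_1,p_1}(0,1)\subset B_{p_1}(0,2^{jd(\frac{1}{p_1}-\frac{1}{u_1})})$ as in \eqref{balls} and then the \emph{middle} regime of Sch\"utt's estimate \eqref{Schuett} for $\ell_{p_1}^{2^{jd}}\hookrightarrow\ell_{u_2}^{2^{jd}}$, which gives $k^{1/r}e_k\le c\,2^{jd(\frac1r+\frac{1}{u_2}-\frac{1}{u_1})}$ on $\log 2^{jd}\le k\le c2^{jd}$; this step needs $p_1\le u_2$, and the complementary case $u_2<p_1$ is treated separately via Lemma~\ref{l:u2p1}. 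Your treatment of the ranges $n\lesssim\log 2^{jd}$ (where $(\log 2^{jd})^{1/r}$ is only polynomial in $j$) and $n\gtrsim 2^{jd}$ (via Lemma~\ref{lemma1503}) and of the lower bound is fine, as is the whole argument when $\frac1u\le 0$.

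The proposed ``cleaner route'' does not repair this. First, $\|\id_j\|\sim 2^{-jd/u}$ is false in the relevant case: by Lemma~\ref{lemma15030} one has $\|\id_j\|=\max(1,2^{-jd/u})$ under the stated hypotheses, which equals $1$ when $u_1<u_2$. Second, Lemma~\ref{dimN} gives $e_k(\id\colon E\to E)\sim 1$ for all $k\le 2\cdot 2^{jd}$, so factoring $\id_j$ through the identity of $\mmbet$ cannot produce any decay in the intermediate range either; the decay there is a two-norm (Sch\"utt) phenomenon, not a dimension phenomenon.
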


\begin{proof}
If $u_2<p_1$, then \eqref{idelmm1} follows from Lemma~\ref{l:u2p1}. 

Let $p_1\le u_2$ and let $e_k=e_k(\id_j:\mmbet \rightarrow \mmbzt)$. Lemma~\ref{lemma1503} implies 
\begin{equation}\label{klarge}
\sup_{k\ge c2^{jd}} k^\frac{1}{r} e_k \;\sim\;  2^{jd(\frac 1r - \frac{1}{u})} .
\end{equation}  
Thus to verify \eqref{idelmm1} it is sufficient to prove the estimates from above for $1\le k\le c2^{jd}$. 
If $\log c2^{jd}\le k \le c2^{jd}$, then by the Sch\"utt estimates \eqref{Schuett} we get 
\begin{align*}
  k^\frac{1}{r} e_k  &\le  k^\frac{1}{r} e_k(\id_j:\mmbet \rightarrow \ell_{u_2}^{2^{jd}}) \le  k^\frac{1}{r} 2^{jd(\frac{1}{p_1}-\frac{1}{u_1})} e_k\big(B_{p_1}(0,1),\ell_{u_2}\big) 
  \\
& \le k^\frac{1}{r} 2^{jd(\frac{1}{u_2}-\frac{1}{u_1})} \big(\frac{k}{2^{jd}}\big)^{\frac{1}{u_2}-\frac{1}{p_1}} \big(\log\big(1+\frac{2^{jd}}{k}\big)\big)^{\frac{1}{p_1}-\frac{1}{u_2}} \le c 2^{jd(\frac{1}{r} +  \frac{1}{u_2}-\frac{1}{u_1})}.  \nonumber
\end{align*} 
If  $1\le k \le \log c2^{jd}$, then Lemma~\ref{dimN} implies 
\begin{align*}
k^\frac{1}{r} e_k &\le k^\frac{1}{r} e_k(\id: \mmbet \rightarrow \mmbet)\, \|\id: \mmbet \rightarrow \mmbzt \| \le k^\frac{1}{r} 
\max(1,2^{-jd\frac{1}{u}}) \\
& \le c 2^{jd(\frac{1}{r}-\frac{1}{u})},
\end{align*}
since $k^{1/r}\le c 2^{jd(\frac{1}{r}-\frac{1}{u})}$ if $\frac{1}{r}>  \frac{1}{u}>0$ and $1<k\le \log c2^{jd}$. 
This proves \eqref{idelmm1}.
\end{proof}

\begin{proposition}\label{slarge}
	Let  $\sigma_i\in \R$, $0<q_i\leq\infty$, $0<p_i\leq u_i<\infty$, $i=1,2$.  Assume $p_1\le u_2$ and  
	\begin{equation}\label{bd3acomplarge}
	\frac{\sigma_1-\sigma_2}{d} >  \frac{1}{p_1}- \frac{1}{u_2} .
	\end{equation}
Then there is some $c>0$ such that 
\begin{equation}\label{1603-1large}
  e_k \big(\mbet\hookrightarrow \mbzt\big) 
  \le c   k^{-\frac{\sigma_1-\sigma_2}{d}} .  
\end{equation}	
\end{proposition}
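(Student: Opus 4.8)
The plan is to decompose the identity operator on the sequence space $\mbet$ according to the dyadic levels $j$ and to reduce everything to the building-block estimates from Lemmas~\ref{lemma1503}, \ref{idealmm} and the elementary operator ideal bound \eqref{3-2-16}. First I would split $\id = \sum_{j\ge 0} \id^{(j)}$, where $\id^{(j)}$ acts on the block of coordinates $\{\lambda_{j,m}: Q_{j,m}\subset Q\}$; there are $M_j\sim 2^{jd}$ of them at level $j$. Since $\mbet$ and $\mbzt$ are built from weighted $\ell_{q}$-sums over $j$ of the finite-dimensional norms $\mmb$, the weight carried by level $j$ in the source is $2^{j(\sigma_1-d/u_1)}$ and in the target $2^{j(\sigma_2-d/u_2)}$, so the restriction of $\id$ to level $j$ is, up to these weights, exactly the operator $\id_j:\mmbet\to\mmbzt$ studied in Lemmas~\ref{lemma15030}--\ref{idealmm}. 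Concretely, $\id^{(j)}$ as a map $\mbet\to\mbzt$ has norm $\sim 2^{-j(\sigma_1-\sigma_2)/d\cdot d}\,\|\id_j\|$ after accounting for the differing $d/u_i$ normalisations — or, more usefully, the full family $\{2^{j(\sigma_2-\sigma_1)}\id^{(j)}\}_j$ is uniformly controlled by $\|\id_j\|$.

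The core of the argument is then a standard "distribution of entropy across levels" scheme (as in \cite{TrFS}, \cite{ET}). I would fix $k\in\N$ and choose a splitting parameter $J=J(k)$ with $2^{Jd}\sim k$. For the low levels $j\le J$ one uses, for each $j$, the operator ideal estimate \eqref{idelmm1} of Lemma~\ref{idealmm} (valid precisely in the regime $p_1\ge p_2$ or $p_2/u_2\le p_1/u_1$, which is what the hypothesis $p_1\le u_2$ together with $p_2\le u_2$ guarantees after checking $p_2/u_2\le 1\le p_1/u_2\cdot(u_2/u_1)$... — here one must verify that the lemma's case distinction is met, or invoke Lemma~\ref{l:u2p1} if $u_2<p_1$, but that is excluded by $p_1\le u_2$). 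Summing the contributions of levels $j\le J$ with the geometric weight $2^{-j(\sigma_1-\sigma_2)}$ and distributing roughly $k_j\sim 2^{jd}$ of the total budget $k$ to level $j$, one gets a bound of order $\sum_{j\le J} 2^{-j(\sigma_1-\sigma_2)}\cdot 2^{jd(1/r-1/u)}$ with an appropriate $r$; the condition \eqref{bd3acomplarge}, i.e. $(\sigma_1-\sigma_2)/d>1/p_1-1/u_2\ge 1/u$, makes this a convergent geometric series dominated by its first term, giving $O(k^{-(\sigma_1-\sigma_2)/d})$. For the high levels $j>J$ one uses instead the exponential-in-$k$ decay from Lemma~\ref{lemma1503}/\ref{l:u2p1}: $e_k(\id_j)\sim 2^{-k2^{-jd}}2^{jd(1/u_2-1/u_1)}$, distribute a budget $k_j\sim 2^{jd-J d+j}$ or similar so that $\sum_j k_j\lesssim k$ and each $2^{-k_j 2^{-jd}}$ decays geometrically in $j-J$; combined with the weight $2^{-j(\sigma_1-\sigma_2)}$ this tail is again $O(2^{-J(\sigma_1-\sigma_2)})=O(k^{-(\sigma_1-\sigma_2)/d})$. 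Finally one assembles the two pieces using the subadditivity/additivity of entropy numbers under finite sums of operators (if $T=\sum T_i$ and $\sum k_i\le k+$const, then $e_k(T)\lesssim \sum e_{k_i}(T_i)$, up to quasi-norm constants).

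The main obstacle I anticipate is bookkeeping the quasi-norm constants in the additivity step: because $\mbzt$ is only a quasi-Banach space, the triangle-type inequality $e_{\sum k_i -?}(\sum T_i)\lesssim \sum e_{k_i}(T_i)$ picks up a constant depending on the number of terms summed, which here grows with $J\sim \log k$; one must therefore either work with a dyadic grouping of the levels (so only $O(\log J)$ groups each treated with a bounded quasi-norm constant) or use the known refinement for quasi-Banach spaces where the loss is at most polynomial in the number of terms and is absorbed by the slack in \eqref{bd3acomplarge} — I would take the latter route, noting that the strict inequality in \eqref{bd3acomplarge} leaves an $\varepsilon$ of room that kills any $(\log k)^{c}$ factor. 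A secondary point to get right is the precise exponent relation between the sequence-space weights and $\|\id_j\|$, in particular that the effective single-level map has norm exactly $2^{-j(\sigma_1-\sigma_2)/d\cdot d}=2^{j(\sigma_2-\sigma_1)}$ times a bounded factor under the standing hypotheses, so that the geometric series has ratio $2^{-(\sigma_1-\sigma_2)+d/u}<1$ by \eqref{bd3acomplarge}; this is where the hypothesis is used and it should be stated explicitly at that point.
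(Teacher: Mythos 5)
Your overall architecture -- decompose $\id$ into the level operators $\id_j$, control each level by an operator--ideal (or entropy--budget) estimate, and sum a geometric series whose convergence is guaranteed by \eqref{bd3acomplarge} -- is exactly the paper's strategy. But there is a genuine gap in the per-level estimate. You invoke Lemma~\ref{idealmm} for the low levels, and that lemma is only valid when $p_1\ge p_2$, or when $p_1<p_2$ and $\frac{p_2}{u_2}\le\frac{p_1}{u_1}$. The hypotheses of Proposition~\ref{slarge} ($p_1\le u_2$ and \eqref{bd3acomplarge}) do \emph{not} imply this case distinction: the configuration $p_1<p_2$ with $\frac{p_2}{u_2}>\frac{p_1}{u_1}$ is perfectly admissible here, and your attempted verification ``$p_2/u_2\le 1\le p_1/u_2\cdot(u_2/u_1)$'' reduces to the claim $\frac{p_2}{u_2}\le\frac{p_1}{u_1}$, which is false in general. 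Worse, in that excluded configuration the bound you want, namely $\lrie(\id_j:\mmbet\to\mmbzt)\lesssim 2^{jd(\frac1r-\frac1{u_1}+\frac1{u_2})}$ for every $\frac1r>\max\{0,\frac1{u_1}-\frac1{u_2}\}$, cannot hold: if it did, your summation scheme would yield the classical rate $k^{-(\sigma_1-\sigma_2)/d}$ under the sole condition $\frac{\sigma_1-\sigma_2}{d}>\max\{0,\frac1{u_1}-\frac1{u_2}\}$, contradicting the lower bound of Theorem~\ref{reallymorreycase}(ii), which shows a strictly slower decay when $\frac{p_1}{u_1}(\frac1{p_1}-\frac1{p_2})<\frac{\sigma_1-\sigma_2}{d}\le\frac1{p_1}-\frac1{p_2}$. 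So the step is not merely unjustified; as stated it proves something false.

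The missing idea is the factorisation the paper actually uses: for each $j$, write
\[
\mmbet\;\hookrightarrow\;\ell_{p_1}^{2^{jd}}\;\hookrightarrow\;\ell_{u_2}^{2^{jd}}\;\hookrightarrow\;\mmbzt ,
\]
where the first embedding has norm $2^{jd(\frac1{p_1}-\frac1{u_1})}$ (from \eqref{balls}) and the last has norm $1$ by Lemma~\ref{lemma15030}. Applying \eqref{3-2-16} to the middle embedding gives
\[
\lrie(\id_j:\mmbet\to\mmbzt)\;\le\;C\,2^{jd(\frac1r-\frac1{u_1}+\frac1{u_2})}\qquad\text{for }\tfrac1r>\max\bigl\{0,\tfrac1{p_1}-\tfrac1{u_2}\bigr\},
\]
valid in \emph{all} cases with $p_1\le u_2$, at the price of the more restrictive condition on $r$. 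That price is exactly what the hypothesis \eqref{bd3acomplarge} pays for: it opens the nonempty window $\frac1{p_1}-\frac1{u_2}<\frac1r<\frac{\sigma_1-\sigma_2}{d}$ needed to make the tail $\sum_{j>M}$ converge, while for the head $\sum_{j\le M}$ one takes $\frac1r$ larger than $\frac{\sigma_1-\sigma_2}{d}$. With this replacement your summation and quasi-norm bookkeeping (which the paper handles via the $\varrho$-triangle inequality \eqref{3-2-19l} for $\lrie$) goes through; your description of the tail budget should likewise be routed through the ideal quasi-norm rather than Lemma~\ref{lemma1503}, whose hypothesis $k\gtrsim 2^{jd}$ fails for the levels $j>J$.
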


\begin{proof}
	Let $M\in \N$.  We decompose the operator $\id: \mbet\rightarrow \mbzt$ into the sum of two operators 
\begin{equation}\label{P_M}
P_M= \sum_{j=0}^M \id_j ,\quad Q_M= \sum_{j=0}^M \id_j ,  \quad\text{where}\quad
(\id_j \lambda)_{\nu,\ell} := 
	\begin{cases} 
	\lambda_{\nu,\ell} & \text{if}\ \nu=j, \\
	0 & \text{if}\ \nu\not= j . 
	\end{cases}
        \end{equation}
	Then by \eqref{1503-0} we have 
	\begin{align*}
	e_k(\id_j:\mbet\rightarrow \mbzt)  \le  & 2^{-j\delta} e_k \big(\mmbet\hookrightarrow \mmbzt\big) \\   \le& 2^{-j\delta} \|\mmbet\hookrightarrow\ell^{2^{jd}}_{p_1}\| e_k(\ell^{2^{jd}}_{p_1} \rightarrow \ell^{2^{jd}}_{u_2}) \|\ell^{2^{jd}}_{u_2}\hookrightarrow \mmbzt\|  \nonumber\\ \le &
	C 2^{-j\delta}  2^{jd(\frac{1}{p_1}-\frac{1}{u_1})}e_k(\ell^{2^{jd}}_{p_1} \rightarrow \ell^{2^{jd}}_{u_2}), \nonumber
	\end{align*} 
	where $\delta=\sigma_1-\sigma_2-\frac{d}{u_1}+\frac{d}{u_2}$. In consequence,
	\begin{align}
	  \lrie(\id_j) & \le\ C   2^{-j\delta}  2^{jd(\frac{1}{p_1}-\frac{1}{u_1})} 2^{jd(\frac{1}{r}+\frac{1}{u_2}-\frac{1}{p_1})}\nonumber\\
          &= \ C 2^{jd(\frac{1}{r} - \frac{\sigma_1-\sigma_2}{d})},   \quad \text{if}\quad\frac{1}{r} > \max \big( 0, \frac{1}{p_1}-\frac{1}{u_2}\big),   
\label{1603-2l}	\end{align}
	cf. \eqref{3-2-16}. 
	We recall, that for any $r>0$ there exist positive numbers $\rho\le 1$ and $C>0$ such that 
	\begin{equation}\label{3-2-19l}
	\lrie (\sum_j T_j)^\varrho \le \ C\ \sum_{j}  \lrie (T_{j})^\varrho\, , 
	\end{equation} 
	cf. Pietsch \cite[6.2]{Pia} and K\"onig, \cite[1.c.5]{Koe}.
	Hence, \eqref{1603-2l} and \eqref{3-2-19l} yield
	\begin{align*} 
	\lrie (P_M)^\varrho  \le  \sum_{j=0}^M \lrie (\id_{j})^\varrho 
	\le  c\, 
	\sum_{j=0}^M  2^{\varrho jd(\frac{1}{r} - \frac{\sigma_1-\sigma_2}{d})} \le c' 2^{\varrho Md(\frac{1}{r} - \frac{\sigma_1-\sigma_2}{d})} ,
	\end{align*}
	if we choose $r$ such that  
$\	\frac{1}{r} - \frac{\sigma_1-\sigma_2}{d}>0 \ $ and $\ \frac{1}{r} > \frac{1}{p_1}-\frac{1}{u_2}$. 
	Thus 
	\begin{equation}\label{1603-3l}
	e_{2^{Md}} (P_M) \le C 2^{-Md(\frac{\sigma_1-\sigma_2}{d})} .
	\end{equation}
	Similarly, 
	\begin{align*} 
	\lrie (Q_M)^\varrho  \le  \sum_{j=M+1}^\infty \lrie (\id_{j})^\varrho 
	\le  c\, 
	\sum_{j=M+1}^\infty  2^{\varrho jd(\frac{1}{r} - \frac{\sigma_1-\sigma_2}{d})} \le c' 2^{\varrho Md(\frac{1}{r} - \frac{\sigma_1-\sigma_2}{d})} ,
	\end{align*}
	if we choose $r$ such that  
$\	0 \le  \frac{1}{p_1}-\frac{1}{u_2} <\frac{1}{r}< \frac{\sigma_1-\sigma_2}{d} $. 
	Thus 
	\begin{equation}\label{1603-4l}
	e_{2^{Md}} (Q_M) \le C 2^{-Md(\frac{\sigma_1-\sigma_2}{d})} .
	\end{equation} 
	Now by \eqref{1603-3l} and \eqref{1603-4l} we can conclude that 
	\begin{equation}\label{1603-5}
	e_{2^{Md}} (\id: \mbet\rightarrow \mbzt) \le C 2^{-Md(\frac{\sigma_1-\sigma_2}{d})} 
	\end{equation} 
	and by standard arguments
	$\ e_{k} (\id: \mbet\rightarrow \mbzt) \le C k^{-\frac{\sigma_1-\sigma_2}{d}}$, $k\in\nat$.
        
	\end{proof}

Now we are ready to establish our first main result in this context which reflects the `classical' situation as we shall see below.

\begin{theorem}\label{ek_classical}
	Let  $\sigma_i\in \R$, $0<q_i\leq\infty$, $0<p_i\leq u_i<\infty$, $i=1,2$. Assume that 
\bli
\item[{\hfill\bf (i)\hfill}] either
	\begin{equation}\label{bd3acomp-0}
	\frac{ \sigma_1-\sigma_2}{d} > \max\bigg\{0,\frac{1}{u_1} - \frac{1}{u_2}, 
	\frac{p_1}{u_1} \Big(\frac{1}{p_1}- \frac{1}{p_2}\Big) \bigg\}= 0,
	\end{equation}
\item[{\hfill\bf (ii)\hfill}] or
\begin{equation}\label{bd3acompu}
\frac{\sigma_1-\sigma_2}{d} > \max\bigg\{0,\frac{1}{u_1} - \frac{1}{u_2}, 
\frac{p_1}{u_1} \Big(\frac{1}{p_1}- \frac{1}{p_2}\Big) \bigg\} = \frac{1}{u_1} - \frac{1}{u_2}>0.
\end{equation}
Then
\begin{equation}\label{1603-1n}
  e_k \big(\mbet\hookrightarrow \mbzt\big) \sim 
  k^{-\frac{\sigma_1-\sigma_2}{d}} .  
\end{equation}
\eli
\end{theorem}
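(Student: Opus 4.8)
The lower bound in \eqref{1603-1n} is already available: in both cases (i) and (ii) one has $\frac{\sigma_1-\sigma_2}{d}>0$, so the chain of embeddings recalled in Remark~\ref{seq-n-b} together with the multiplicativity of entropy numbers yields $e_k(\mbet\hookrightarrow\mbzt)\gtrsim e_k(\tilde{b}^{\sigma_1}_{u_1,q_1}\hookrightarrow\tilde{b}^{\sigma_2}_{p_2,q_2})\sim k^{-\frac{\sigma_1-\sigma_2}{d}}$, cf. \eqref{ek_below_class}. So the whole work is in the upper estimate, and the plan is to reduce it to Proposition~\ref{slarge} by a suitable intermediate-space argument together with Lemma~\ref{idealmm}.

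First I would treat case (i), i.e. $\max\{0,\tfrac1{u_1}-\tfrac1{u_2},\tfrac{p_1}{u_1}(\tfrac1{p_1}-\tfrac1{p_2})\}=0$. Here both $u_2\ge u_1$ and $p_2\ge p_1$ (the latter from $\tfrac{p_1}{u_1}(\tfrac1{p_1}-\tfrac1{p_2})\le0$), so $\|\id_j\|=1$ by the first line of \eqref{1503-0}. One then repeats the decomposition $\id=P_M+Q_M$ from the proof of Proposition~\ref{slarge}, but now using Lemma~\ref{idealmm} with $u_2\ge u_1$, which gives $\tfrac1u=\tfrac1{u_1}-\tfrac1{u_2}\le0$ and hence $\lrie(\id_j:\mmbet\to\mmbzt)\sim 2^{jd/r}$ for every $r>0$; combined with $\|\id_j\|=1$ this yields $\lrie(\id_j:\mbet\to\mbzt)\lesssim 2^{jd(\frac1r-\frac{\sigma_1-\sigma_2}{d})}$ for any $r>0$, and since $\frac{\sigma_1-\sigma_2}{d}>0$ we may pick $0<\frac1r<\frac{\sigma_1-\sigma_2}{d}$ for $Q_M$ and $\frac1r>\frac{\sigma_1-\sigma_2}{d}$ for $P_M$, so that the summation \eqref{3-2-19l} over $j$ converges (resp. telescopes) exactly as in Proposition~\ref{slarge}, giving $e_{2^{Md}}(\id)\lesssim 2^{-Md\frac{\sigma_1-\sigma_2}{d}}$ and then $e_k(\id)\lesssim k^{-\frac{\sigma_1-\sigma_2}{d}}$ by the usual monotonicity-in-$k$ interpolation.

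For case (ii), $\frac1{u_1}-\frac1{u_2}>0$ and $\frac{p_1}{u_1}(\frac1{p_1}-\frac1{p_2})\le\frac1{u_1}-\frac1{u_2}$, I distinguish according to whether $p_1\le u_2$ or $u_2<p_1$. If $p_1\le u_2$, then the hypothesis $\frac{\sigma_1-\sigma_2}{d}>\frac1{u_1}-\frac1{u_2}$ can be used to check that also $\frac{\sigma_1-\sigma_2}{d}>\frac1{p_1}-\frac1{u_2}$ is \emph{not} automatic, so one does not directly land in Proposition~\ref{slarge}; instead one again runs the $P_M+Q_M$ split, now using $\|\id_j\|\le 2^{jd(\frac1{u_2}-\frac1{u_1})}$ (line three of \eqref{1503-0} when $p_1\ge p_2$, or \eqref{1503-a} when $p_1<p_2$, noting $\frac1{u_2}-\frac{p_1}{u_1p_2}\le\frac1{u_2}-\frac1{u_1}$ under $\frac{p_1}{u_1}(\frac1{p_1}-\frac1{p_2})\le\frac1{u_1}-\frac1{u_2}$) together with Lemma~\ref{idealmm} (which applies since in the present subcase $p_1\ge p_2$ or $\frac{p_2}{u_2}\le\frac{p_1}{u_1}$, both giving $\|\id_j^{(m)}\|=1$ in \eqref{1503-0}) in the form $\lrie(\id_j:\mmbet\to\mmbzt)\sim 2^{jd(\frac1r-\frac1{u_1}+\frac1{u_2})}$ for $\frac1r>\frac1{u_1}-\frac1{u_2}$. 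Multiplying by the norm bound $2^{jd(\frac1{u_2}-\frac1{u_1})}$ would double the exponent, which is too much; so instead one intertwines with an $\ell_{u_2}^{2^{jd}}$ space exactly as in Proposition~\ref{slarge}: $\lrie(\id_j)\le 2^{-j\delta}\|\mmbet\hookrightarrow\ell^{2^{jd}}_{u_2}\|\,\lrie(\ell^{2^{jd}}_{u_2}\to\ell^{2^{jd}}_{u_2})$ is not enough either — rather one should route through $\ell_{p_1}^{2^{jd}}\to\ell_{u_2}^{2^{jd}}$ when $p_1\le u_2$ (using \eqref{3-2-16}) or use Lemma~\ref{lemma1503}/Lemma~\ref{idealmm} directly when $u_2<p_1$ (here Lemma~\ref{l:u2p1} gives the sharp $\lrie$-asymptotics). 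In every subcase one arrives at $\lrie(\id_j:\mbet\to\mbzt)\lesssim 2^{jd(\frac1r-\frac{\sigma_1-\sigma_2}{d})}$ valid for all $\frac1r>\frac1{u_1}-\frac1{u_2}$, and since $\frac{\sigma_1-\sigma_2}{d}>\frac1{u_1}-\frac1{u_2}$ one can again choose $r$ on either side of $\frac{\sigma_1-\sigma_2}{d}$ for $Q_M$, $P_M$ respectively and conclude via \eqref{3-2-19l} as before.

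\textbf{Main obstacle.} The delicate point is bookkeeping the exponents so that, after multiplying the dimension-$2^{jd}$ entropy estimate by the operator norm $\|\id_j\|$, the resulting $j$-exponent is exactly $d(\frac1r-\frac{\sigma_1-\sigma_2}{d})$ with the \emph{same} threshold $\frac1r>(\frac1{u_1}-\frac1{u_2})_+$ in all of cases (i) and (ii) and all sub-cases ($p_1\lessgtr p_2$, $p_1\lessgtr u_2$). This requires using the sharp two-sided bounds of Lemma~\ref{l:u2p1} in the regime $u_2<p_1$ (rather than the cruder $\|\id_j\|$ estimate) and, in the regime $p_1\le u_2$, routing through $\ell_{p_1}^{2^{jd}}\hookrightarrow\ell_{u_2}^{2^{jd}}$ and invoking \eqref{3-2-16} exactly as in Proposition~\ref{slarge}. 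Once the uniform exponent $2^{jd(\frac1r-\frac{\sigma_1-\sigma_2}{d})}$ is secured, the summation over $j$ via \eqref{3-2-19l} and the passage from $e_{2^{Md}}$ to general $e_k$ are routine and identical to the end of the proof of Proposition~\ref{slarge}.
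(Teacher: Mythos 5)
Your overall strategy is the one the paper uses: split $\id=P_M+Q_M$ into level operators $\id_j$, bound the operator-ideal quasi-norms $\lrie(\id_j)$ by $2^{jd(\frac1r-\frac{\sigma_1-\sigma_2}{d})}$ using Lemma~\ref{idealmm} (whose proof splits into the regimes $u_2<p_1$, via Lemma~\ref{l:u2p1}, and $p_1\le u_2$, via Sch\"utt and Lemma~\ref{dimN}), choose $\frac1r$ on either side of $\frac{\sigma_1-\sigma_2}{d}$, and sum via \eqref{3-2-19l} exactly as in Proposition~\ref{slarge}; the lower bound via \eqref{ek_below_class} (or Lemmas~\ref{lemma1503}, \ref{l:u2p1}) is also what the paper does. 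So the architecture is right.

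However, your parameter analysis in case (i) is backwards, and this infects several intermediate claims. The condition $\max\{0,\frac1{u_1}-\frac1{u_2},\frac{p_1}{u_1}(\frac1{p_1}-\frac1{p_2})\}=0$ forces $\frac1{u_1}\le\frac1{u_2}$ and $\frac1{p_1}\le\frac1{p_2}$, i.e. $u_2\le u_1$ and $p_2\le p_1$ — not $u_2\ge u_1$, $p_2\ge p_1$ as you write. Consequently the \emph{third} line of \eqref{1503-0} applies (when $u_2<u_1$), so $\|\id_j\|=2^{jd(\frac1{u_2}-\frac1{u_1})}\ge1$ rather than $1$; and Lemma~\ref{idealmm} with $\frac1u=\frac1{u_1}-\frac1{u_2}\le0$ gives $\lrie(\id:\mmbet\to\mmbzt)\sim2^{jd(\frac1r+\frac1{u_2}-\frac1{u_1})}$, not $2^{jd/r}$. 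Your two misstatements happen to cancel: multiplying the \emph{correct} estimate $2^{jd(\frac1r-\frac1u)}$ by the correct level weight $2^{-j\delta}$ with $\delta=\sigma_1-\sigma_2-\frac{d}{u_1}+\frac{d}{u_2}$ still yields exactly $2^{jd(\frac1r-\frac{\sigma_1-\sigma_2}{d})}$ for every $\frac1r>0$, so the conclusion of case (i) survives, but as written the step is not justified. The analogous slip occurs in case (ii), where $u_1<u_2$, so the third line of \eqref{1503-0} is inapplicable (the first line gives $\|\id_j\|=1$ there); your eventual fallback to Lemma~\ref{idealmm}, valid since case (ii) is equivalent to ($p_1\ge p_2$) or ($p_1<p_2$ and $\frac{p_2}{u_2}\le\frac{p_1}{u_1}$), is the correct and sufficient route, exactly as in the paper's Step 2. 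In short: no structural gap, but the direction of the inequalities and the resulting choice of lines in \eqref{1503-0} and exponents in Lemma~\ref{idealmm} must be corrected before the bookkeeping is sound.
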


\begin{proof}
{\em Step 1.}\quad We prove (i). 
It follows from \eqref{bd3acomp-0} that $u_2\le u_1$  and $p_2\le p_1$. So we may consider two cases: $p_2\le u_2 < p_1\le u_2$ or $p_2\le p_1\le u_2\le u_1$.   

\emph{Substep 1.1}.	In the first case  Lemma~\ref{l:u2p1} implies  that 
	\begin{eqnarray}
	\lrie (\id:\mmbet\hookrightarrow \mmbzt ) \sim 2^{jd(\frac{1}{r}-\frac{1}{u_1}+\frac{1}{u_2})}, \qquad \frac{1}{r} > 0 . 
	\end{eqnarray}
	In consequence, 
	$\ \lrie (\id_j ) \sim 2^{jd(\frac{1}{r}-\frac{\sigma_1-\sigma_2}{d})}$
	for any $\frac{1}{r} > 0 $. Now we can prove the estimate from above in the same way as in Proposition~\ref{slarge}. The estimate from below follows from Lemma~\ref{l:u2p1} and the obvious inequality
	\[
     	e_{2^{jd}} \big(\mmbet\hookrightarrow \mmbzt\big)\le 2^{j\delta} e_{2^{jd}} \big(\mbet\hookrightarrow \mbzt\big) .
	\]

        \emph{Substep 1.2}. Let now $p_2\le p_1\le u_2\le u_1$. We consider the diagram
        
\begin{equation}
\begin{tikzcd}
\mmbet \arrow[rd, "\Id_1"] \arrow[r, "\id"] & \mmbzt \\
& \mmbet \arrow[u, "\Id_2"]
\end{tikzcd}
\end{equation}
By Lemma~\ref{lemma15030} we have $\|\id_2\|= 2^{jd(\frac{1}{u_2}-\frac{1}{u_1})}$. Moreover Lemma~\ref{dimN} implies 
$\lrie(\Id_1) \le c 2^{\frac{jd}{r}}$. 
In consequence,
\begin{equation}\label{3103-1}
\lrie(\id_j) \le 2^{-j\delta} 2^{jd(\frac{1}{u_2}-\frac{1}{u_1})} \lrie(\Id_1) \le 2^{jd(\frac{1}{r}-\frac{\sigma_1-\sigma_2}{d})}  . 
\end{equation} 
In the same way as above we show that 
$\ e_{2^{Md}} (P_M) \le C 2^{-Md(\frac{\sigma_1-\sigma_2}{d})} \ $ {and} $\ e_{2^{Md}} (Q_M) \le C 2^{-Md(\frac{\sigma_1-\sigma_2}{d})}$ . 
The estimate from below follows from Lemma~\ref{lemma1503}. \\
{\em Step 2.}\quad We prove (ii). 
Let $p_1\ge p_2$  and $u_1< u_2$, or  $p_1< p_2$ and $\frac{p_2}{u_2} \le \frac{p_1}{u_1}$. In both cases we can use Lemma~\ref{idealmm}. So the estimate from above can be proved in the same way as in Proposition~\ref{slarge}. The estimate from below follows once more from Lemma~\ref{lemma1503}. 
\end{proof}


\begin{ownremark}
We call this the `classical' setting, as in case of $p=u$ the sequence spaces $\tilde{n}^\sigma_{p,p,q}=\tilde{b}^\sigma_{p,q}$ coincide, recall Remark~\ref{seq-n-b}. As mentioned in \eqref{ek_below_class}, it is well-known that
$\ e_k(\tilde{b}^{\sigma_1}_{u_1,q_1} \hookrightarrow \tilde{b}^{\sigma_2}_{p_2,q_2})\sim \ k^{-\frac{\sigma_1-\sigma_2}{d}} $
in all cases admitted by \eqref{cond-comp}; see \cite[Thm.~8.2]{TrFS}. This behaviour is now extended to spaces $\mbt$ for all $\ 0<p\leq u<\infty$ whenever \eqref{bd3acomp-0} or \eqref{bd3acompu} are satisfied.
\end{ownremark}

In view of our compactness result Theorem~\ref{comp}, in particular, \eqref{cond-comp}, and Theorem~\ref{ek_classical}, it remains to deal with the case when
\[
\max\Big\{0,\frac{1}{u_1} - \frac{1}{u_2}\Big\} 
< \frac{p_1}{u_1} \Big(\frac{1}{p_1}- \frac{1}{p_2}\Big)  
\]
which is equivalent to  $p_1 < p_2$   and $\frac{p_2}{u_2} > \frac{p_1}{u_1}$. This excludes, in particular, the setting $p_1=u_1$. We first give the counterpart of Proposition~\ref{slarge} in this case.

\begin{proposition}\label{slargebis}
	Let  $\sigma_i\in \R$, $0<q_i\leq\infty$, $0<p_i\leq u_i<\infty$, $i=1,2$.  Assume that
$p_1<p_2$, $\frac{p_1}{u_1}< \frac{p_2}{u_2}$ and  
	\begin{equation}\label{bd3acompsmall}
	\frac{\sigma_1-\sigma_2}{d} >  \frac{1}{p_1}- \frac{1}{p_2} .
	\end{equation}
	Then there is some $c>0$ such that 
	\begin{equation}\label{1603-1small}
	  e_k \big(\mbet\hookrightarrow \mbzt\big) 
          \le c   k^{-\frac{\sigma_1-\sigma_2}{d}} .  
	\end{equation}	
\end{proposition}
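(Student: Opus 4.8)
The plan is to follow the template of Proposition~\ref{slarge}. I would write $\id:\mbet\to\mbzt$ as $P_M+Q_M$ with $P_M=\sum_{j=0}^M\id_j$ and $Q_M=\sum_{j>M}\id_j$, the $\id_j$ being the level-$j$ block projections as in \eqref{P_M}, estimate $\lrie(\id_j)$ for each $j$, sum up with the $\varrho$-triangle inequality \eqref{3-2-19l}, and conclude $e_{2^{Md}}(\id)\le C\,2^{-Md(\frac{\sigma_1-\sigma_2}{d})}$, hence $e_k(\id)\le C\,k^{-\frac{\sigma_1-\sigma_2}{d}}$ by monotonicity. Since a level-$j$ sequence satisfies $\|\lambda\mid\mbet\|=2^{j(\sigma_1-d/u_1)}\|\lambda\mid\mmbet\|$ and likewise for $\mbzt$, one has $\lrie(\id_j:\mbet\to\mbzt)=2^{-j\delta}\,\lrie(\id_j:\mmbet\to\mmbzt)$ with $\delta=\sigma_1-\sigma_2-\frac{d}{u_1}+\frac{d}{u_2}$, so the whole matter reduces to a sharp bound for $\lrie(\id_j:\mmbet\to\mmbzt)$ in the finite-dimensional Morrey spaces.

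The key claim -- the proper-Morrey counterpart of Lemma~\ref{idealmm}, now under $p_1<p_2$, $\frac{p_1}{u_1}<\frac{p_2}{u_2}$ -- would be, with $\frac1u=\frac1{u_1}-\frac1{u_2}$,
\[
\lrie(\id_j:\mmbet\to\mmbzt)\ \le\ C\,2^{jd\left(\frac1r-\frac1u\right)}\qquad\text{for all }\ \frac1r>\frac1{p_1}-\frac1{p_2}.
\]
Granting it, $\lrie(\id_j:\mbet\to\mbzt)\le C\,2^{jd(\frac1r-\frac{\sigma_1-\sigma_2}{d})}$ for $\frac1r>\frac1{p_1}-\frac1{p_2}$, and then, exactly as in Proposition~\ref{slarge}, one takes $\frac1r>\frac{\sigma_1-\sigma_2}{d}$ for the $P_M$-part and $\frac1{p_1}-\frac1{p_2}<\frac1r<\frac{\sigma_1-\sigma_2}{d}$ for the $Q_M$-part; the second choice is possible exactly because $\frac{\sigma_1-\sigma_2}{d}>\frac1{p_1}-\frac1{p_2}$, which is where the hypothesis enters. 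The remaining steps ($\lrie(P_M)^\varrho\le c\sum_{j\le M}\lrie(\id_j)^\varrho$, geometric sums, $e_{2^{Md}}\le\lrie\cdot 2^{-Md/r}$) are then identical to that proof.

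To prove the key $\lrie$-estimate I would split the range of $k$. For $k\gtrsim 2^{jd}$, Lemma~\ref{lemma1503} applies verbatim and gives $\sup_{k\gtrsim 2^{jd}}k^{1/r}e_k(\id_j:\mmbet\to\mmbzt)\sim 2^{jd(\frac1r-\frac1u)}$, the model for the whole bound. For $1\le k\lesssim\log 2^{jd}$, Lemma~\ref{dimN} together with $\|\id_j:\mmbet\to\mmbzt\|\sim 2^{jd(\frac1{u_2}-\frac{p_1}{u_1p_2})}$ (Lemma~\ref{lemma15030}) gives $k^{1/r}e_k\lesssim(\log 2^{jd})^{1/r}2^{jd(\frac1{u_2}-\frac{p_1}{u_1p_2})}$, and since $\frac1r>\frac1{p_1}-\frac1{p_2}>\frac{p_1}{u_1}\big(\frac1{p_1}-\frac1{p_2}\big)=\frac1{u_1}-\frac{p_1}{u_1p_2}$ (using $p_1<u_1$), the exponent $\frac1{u_1}-\frac{p_1}{u_1p_2}-\frac1r$ is negative and the logarithm is absorbed into $2^{jd(\frac1r-\frac1u)}$. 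The genuinely hard range is $\log 2^{jd}\lesssim k\lesssim 2^{jd}$: here a factorisation of $\id_j$ through a single $\ell_q^{2^{jd}}$ is \emph{not} enough -- routing the target through $\ell_{u_2}$ gives only $\frac1r>\frac1{p_1}-\frac1{u_2}$, i.e.\ reproduces Proposition~\ref{slarge}, and routing through $\ell_{p_2}$, or routing $\id_j$ through an intermediate Morrey space such as $m^{2^{jd}}_{u_1,p_2}$ or $m^{2^{jd}}_{u_2,p_1}$, loses exactly as much. Instead one must use the Morrey structure of $\mmbzt$ directly: decompose the level-$j$ coefficient block inside $Q$ into the $2^{\nu d}$ dyadic sub-blocks of each intermediate generation $\nu$, $0\le\nu\le j$, split the $\mmbzt$-norm into a ``fine'' part (scales $\nu\ge\nu_0$, a direct sum over generation-$\nu_0$ sub-blocks) and a ``coarse'' part, apply Sch\"utt's finite-dimensional estimates \eqref{Schuett} to the induced maps $\ell_{p_1}\to\ell_{p_2}$ on the sub-blocks carrying the Morrey weights $2^{d(j-\nu)(\frac1{u_2}-\frac1{p_2})}$, and optimise over $\nu_0$; this should turn the crude threshold $\frac1{p_1}-\frac1{u_2}$ into $\frac1{p_1}-\frac1{p_2}$.

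The main obstacle is precisely this last step: the sharp finite-dimensional Morrey entropy estimate for $\log 2^{jd}\lesssim k\lesssim 2^{jd}$, with constants uniform in $j$. All the cheap reductions collapse the Morrey structure and only recover the weaker threshold of Proposition~\ref{slarge}; pushing it down to $\frac1{p_1}-\frac1{p_2}$ requires the dyadic-sub-block decomposition above, and the combinatorial bookkeeping -- choosing the splitting generation $\nu_0$ as a function of $k$ and $j$ and keeping everything $j$-independent -- is the technical heart of the argument.
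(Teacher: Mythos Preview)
Your overall architecture is right and matches the paper exactly: decomposition $\id=P_M+Q_M$, reduction to the level-$j$ block estimate $\lrie(\id_j:\mmbet\to\mmbzt)\le C\,2^{jd(\frac1r-\frac1u)}$ for $\frac1r>\frac1{p_1}-\frac1{p_2}$, then the $\varrho$-quasi-triangle summation with two choices of $r$. Your treatment of the ranges $k\gtrsim 2^{jd}$ and $1\le k\lesssim\log 2^{jd}$ is also fine.

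The gap is in the middle range. You correctly diagnose that routing through a single $\ell_q$-space fails, but your proposed cure --- a dyadic sub-block decomposition with an optimisation over a splitting generation $\nu_0$ --- is only sketched, and you yourself flag the bookkeeping as the unresolved ``technical heart''. The paper avoids this combinatorics entirely by a one-line H\"older-type inequality: since $p_1<p_2$ and $\frac{p_2}{u_2}>\frac{p_1}{u_1}$, for any $\lambda$ and any scale $0\le\nu\le j$,
\[
2^{(j-\nu)d(\frac{p_2}{u_2}-1)}\!\!\sum_{k:Q_{j,k}\subset Q_{\nu,m}}\!\!|\lambda_{j,k}|^{p_2}
\le 2^{(j-\nu)d(\frac{p_2}{u_2}-\frac{p_1}{u_1})}\Big(\sup_k|\lambda_{j,k}|\Big)^{p_2-p_1}
\cdot 2^{(j-\nu)d(\frac{p_1}{u_1}-1)}\!\!\sum_{k}|\lambda_{j,k}|^{p_1},
\]
and taking the supremum over $\nu,m$ gives
\[
\|\lambda\mid m^{2^{jd}}_{u_2,p_2}\|\ \le\ 2^{jd(\frac1{u_2}-\frac{p_1}{p_2u_1})}\,
\|\lambda\mid\ell^{2^{jd}}_\infty\|^{\,1-\frac{p_1}{p_2}}\,
\|\lambda\mid m^{2^{jd}}_{u_1,p_1}\|^{\,\frac{p_1}{p_2}}.
\]
After rescaling the target norm this is exactly the interpolation hypothesis \eqref{5.25} with $\theta=\frac{p_1}{p_2}$, $B_0=\ell^{2^{jd}}_\infty$, $B_1=m^{2^{jd}}_{u_1,p_1}$. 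The interpolation property \eqref{ek_ipol} then yields
\[
e_{2k}(m^{2^{jd}}_{u_1,p_1}\hookrightarrow m^{2^{jd}}_{u_2,p_2})
\le c\,2^{jd(\frac1{u_2}-\frac{p_1}{p_2u_1})}\,
e_k(m^{2^{jd}}_{u_1,p_1}\hookrightarrow\ell^{2^{jd}}_\infty)^{1-\frac{p_1}{p_2}}\,
e_k(\id:m^{2^{jd}}_{u_1,p_1}\to m^{2^{jd}}_{u_1,p_1})^{\frac{p_1}{p_2}}.
\]
The first factor is handled by $B_{u_1,p_1}(0,1)\subset B_{p_1}(0,2^{jd(\frac1{p_1}-\frac1{u_1})})$ together with Sch\"utt \eqref{Schuett}, the second by Lemma~\ref{dimN}; multiplying out gives precisely the desired $\lrie$-bound with threshold $\frac1r>\frac1{p_1}-\frac1{p_2}$. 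This interpolation trick is the idea you are missing --- it replaces your proposed sub-block combinatorics by a single inequality and two already-known entropy estimates.
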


\begin{proof} 

	{\em Step 1.}\quad First we   prove that
	\begin{equation}\label{uu1}
	\lrie (m^{2^{jd}}_{u_1,p_1}\hookrightarrow m^{2^{jd}}_{u_2,p_2})\le c 2^{jd(\frac{1}{r}-\frac{1}{u_1}+\frac{1}{u_2})},\quad \text{whenever}  \quad \frac{1}{r}> \frac{1}{p_1}-\frac{1}{p_2}.  
	\end{equation} 
	Since $p_1<p_2$ we have for any $\nu$, $0\le \nu\le j$,
	\begin{align*}
	  2^{(j-\nu) d (\frac{p_2}{u_2}-1)}&\!\!\!\! \sum_{k:Q_{j,k}\subset Q_{\nu,m}}\!\!\! |\lambda_{j,k}|^{p_2}  \\
          \le 	
	&\ 2^{(j-\nu) d (\frac{p_2}{u_2}-\frac{p_1}{u_1})}\big(\!\!\sup_{k:Q_{j,k}\subset Q_{\nu,m}} |\lambda_{j,k}|\big)^{p_2-p_1}   \;
	2^{(j-\nu) d (\frac{p_1}{u_1}- 1)}\!\!\!\!\sum_{k:Q_{j,k}\subset Q_{\nu,m}} \!\!\!|\lambda_{j,k}|^{p_1}.   
	\end{align*}
	Moreover $\frac{p_2}{u_2}>\frac{p_1}{u_1}$,  so taking the supremum we get
	\begin{equation}\label{uu1a}
	\|\lambda|m^{2^{jd}}_{u_2,p_2}\| \le 2^{jd(\frac{1}{u_2}-\frac{p_1}{p_2u_1})} \|\lambda|\ell^{2^{jd}}_\infty\|^{1-\frac{p_1}{p_2}}  \|\lambda|m^{2^{jd}}_{u_1,p_1}\|^{\frac{p_1}{p_2}} .
	\end{equation}	
	
	Let $m^{*}_{u_2,p_2}$ denote the space $m^{2^{jd}}_{u_2,p_2}$ equipped with the norm 
	\begin{equation}\label{uu1c}\|\lambda|m_{u_2,p_2}^\ast\| =  2^{-jd(\frac{1}{u_2}-\frac{p_1}{p_2u_1})} \|\lambda|m^{2^{jd}}_{u_2,p_2}\| .\end{equation}  
	Thus \eqref{uu1a} implies
	\begin{equation}\label{uu1b}
	\|\lambda|m^\ast_{u_2,p_2}\| \le \left\|\lambda|\ell^{2^{jd}}_\infty\right\|^{1-\frac{p_1}{p_2}}  \left\|\lambda|m^{2^{jd}}_{u_1,p_1}\right\|^{\frac{p_1}{p_2}} .
	\end{equation}	
        
	Now we estimate the entropy numbers  of the embedding $m^{2^{jd}}_{u_1,p_1}\hookrightarrow m^{*}_{u_2,p_2}$. 
	%
	Lemma~\ref{lemma15030} implies 
	\begin{align}
	e_k(m^{2^{jd}}_{u_1,p_1}\hookrightarrow \ell_\infty^{2^{jd}}) \le 2^{jd(\frac{1}{p_1}-\frac{1}{u_1})} e_k(\ell^{2^{jd}}_{p_1}\hookrightarrow \ell_\infty^{2^{jd}})
	\end{align}
	So using the inequality \eqref{uu1b},   Sch\"utt's estimates \eqref{Schuett}, Lemma~\ref{dimN} and the interpolation properties of the entropy numbers  \eqref{5.25}, \eqref{ek_ipol} with $\theta=\frac{p_1}{p_2}$, we get
	\begin{align}\nonumber
	  e_{2k-1}&(m^{2^{jd}}_{u_1,p_1}  \hookrightarrow m^{*}_{u_2,p_2})\\
           \le &\ c e_k(m^{2^{jd}}_{u_1,p_1}\hookrightarrow \ell_\infty^{2^{jd}})^{1-\frac{p_1}{p_2}} e_k(m^{2^{jd}}_{u_1,p_1}\hookrightarrow m^{2^{jd}}_{u_1,p_1})^{\frac{p_1}{p_2}}\nonumber  \\
	 \le &\ c 2^{jd(\frac{1}{p_1}-\frac{1}{u_1})(1- \frac{p_1}{p_2})}
	\begin{cases}
	1 &  \text{if}\  1\le k \le \log 2^{jd+1},\\
	\Big(k^{-1}\log \big((1+\frac{2^{jd}}{k})\big)\Big)^{\frac{1}{p_1}-\frac{1}{p_2}} & \text{if}\ \log 2^{jd+1}\le k \le 2^{jd+1},\\
	2^{-k 2^{-jd}} 2^{-jd(\frac{1}{p_1}-\frac{1}{p_2})} & \text{if} \ k \ge 2^{jd+1}.
	\end{cases}
\label{uu2}	\end{align}
	Let us take $r>0$ such that $\frac{1}{r}> \frac{1}{p_1}- \frac{1}{p_2}$. Then  using the estimates \eqref{uu2} we can prove that 
	\begin{equation}\label{uu3}
	k^\frac{1}{r} e_{k}(m^{2^{jd}}_{u_1,p_1}  \hookrightarrow m^{*}_{u_2,p_2}) \le c 
	2^{jd(\frac{1}{r} -\frac{1}{p_1}+\frac{1}{p_2})} 2^{jd(\frac{1}{p_1}-\frac{1}{u_1})(1- \frac{p_1}{p_2})} = 2^{jd(\frac{1}{r} -\frac{p_1}{u_1}(\frac{1}{p_1}-\frac{1}{p_2}))}.
	\end{equation}
	But the relation \eqref{uu1c} between the norms leads to 
	\[
	e_{k}(m^{2^{jd}}_{u_1,p_1}  \hookrightarrow m^{2^{jd}}_{u_2,p_2}) =  2^{jd(\frac{1}{u_2}-\frac{p_1}{p_2u_1})}  e_{k}(m^{2^{jd}}_{u_1,p_1}  \hookrightarrow m^{*}_{u_2,p_2})
	\]	
	So \eqref{uu3} gives 	
	\begin{equation}\label{uu1d}
          k^\frac{1}{r} e_{k}(m^{2^{jd}}_{u_1,p_1}  \hookrightarrow m^{2^{jd}}_{u_2,p_2}) \le c\ 2^{jd(\frac1r-\frac{1}{u_1}+\frac{1}{u_2})}.
        \end{equation} 
	This proves \eqref{uu1}. 	\\
	
	{\em Step 2.}\quad We proceed similar to the proof of Proposition~\ref{slarge}. 
	Let $M\in \N$.  We decompose the operator $\id: \mbet\rightarrow \mbzt$ as in \eqref{P_M}.

	The elementary properties of entropy numbers imply 
	\begin{align}\label{uu1e}
	e_k(\id_j:\mbet\rightarrow \mbzt)  \le  & \ 2^{-j\delta} e_k \big(\mmbet\hookrightarrow \mmbzt\big) \end{align} 
	where $\delta=\sigma_1-\sigma_2-\frac{d}{u_1}+\frac{d}{u_2}$. Consequently, using \eqref{uu1}, \eqref{uu1d} and \eqref{uu1e} we arrive at 
	\begin{equation}\label{uu4}
	\lrie(\id_j) \le C   2^{-j\delta}  2^{jd(\frac{1}{r}-\frac{1}{u_1}+\frac{1}{u_2})}  = C 2^{jd(\frac{1}{r} - \frac{\sigma_1-\sigma_2}{d})},   \qquad \frac{1}{r} >  \frac{1}{p_1}-\frac{1}{p_2}.   
	\end{equation}
	Hence for some  positive number  $\rho\le 1$,  \eqref{uu4} and \eqref{3-2-19l} yield
	\begin{align*} 
	\lrie (P_M)^\varrho  \le  \sum_{j=0}^M \lrie (\id_{j})^\varrho 
	\le  c\, 
	\sum_{j=0}^M  2^{\varrho jd(\frac{1}{r} - \frac{\sigma_1-\sigma_2}{d})} \le c' 2^{\varrho Md(\frac{1}{r} - \frac{\sigma_1-\sigma_2}{d})} ,
	\end{align*}
	if we choose $r$ such that  
$\	\frac{1}{r} > \frac{\sigma_1-\sigma_2}{d} > \frac{1}{p_1}-\frac{1}{p_2}$. 
	Thus 
	\begin{equation}\label{1603-3la}
	e_{2^{Md}} (P_M) \le C 2^{-Md(\frac{\sigma_1-\sigma_2}{d})} .
	\end{equation}
	
	Similarly, 
	\begin{align*} 
	\lrie (Q_M)^\varrho  \le  \sum_{j=M+1}^\infty \lrie (\id_{j})^\varrho 
	\le  c\, 
	\sum_{j=M+1}^\infty  2^{\varrho jd(\frac{1}{r} - \frac{\sigma_1-\sigma_2}{d})} \le c' 2^{\varrho Md(\frac{1}{r} - \frac{\sigma_1-\sigma_2}{d})} ,
	\end{align*}
	if we choose $r$ this time such that  
$\ 	0 \le  \frac{1}{p_1}-\frac{1}{p_2} <\frac{1}{r}< \frac{\sigma_1-\sigma_2}{d}$. 
	Thus 
	\begin{equation}\label{1603-4la}
	e_{2^{Md}} (Q_M) \le C 2^{-Md(\frac{\sigma_1-\sigma_2}{d})} .
	\end{equation} 
	Now \eqref{1603-3la} and \eqref{1603-4la} give us 
$\ 	e_{2^{Md}} (\id: \mbet\rightarrow \mbzt) \le C 2^{-Md(\frac{\sigma_1-\sigma_2}{d})} $
	and by standard arguments
$	e_{k} (\id: \mbet\rightarrow \mbzt) \le C k^{-\frac{\sigma_1-\sigma_2}{d}}$, $ k\in\nat$,
as desired.
\end{proof}

Now we can present the (almost) complete counterpart of Theorem~\ref{ek_classical} showing some surprising phenomenon.

\begin{theorem}\label{reallymorreycase}
Let  $\sigma_i\in \R$, $0<q_i\leq\infty$, $0<p_i\leq u_i<\infty$, $i=1,2$, and   
{\begin{equation}\label{bd3acompn}
\frac{\sigma_1-\sigma_2}{d} >  \frac{p_1}{u_1} \Big(\frac{1}{p_1}- \frac{1}{p_2}\Big)   >  \max\bigg\{0,\frac{1}{u_1} - \frac{1}{u_2}\bigg\}.
\end{equation}}
\bli
\item[{\upshape\bfseries (i)}] If $\ \frac{\sigma_1- \sigma_2}{d} >  \frac{1}{p_1} -\frac{1 }{p_2}$, then
 \begin{equation}\label{morreyslarge}
    e_k \big(\mbet\hookrightarrow \mbzt\big) \sim     k^{-\frac{\sigma_1-\sigma_2}{d}} .  
\end{equation}

\item[{\upshape\bfseries (ii)}]
  If $\ \frac{p_1}{u_1} \Big(\frac{1}{p_1}- \frac{1}{p_2}\Big)< \frac{\sigma_1- \sigma_2}{d} \le  \frac{1}{p_1} -\frac{1 }{p_2}$, then there exists some $c>0$ and for any $\varepsilon>0$ some $c_\varepsilon>0$ such that for all $k\in\nat$, 
\begin{align} \label{morreyssmall-2}
  c k^{- \frac{u_1}{u_1-p_1}(\frac{\sigma_1-\sigma_2}{d}-\frac{p_1}{u_1}(\frac{1}{p_1}- \frac{1}{p_2}) )} & \le    e_k \big(\mbet\hookrightarrow \mbzt\big) \\
  &\le c_\varepsilon k^{- \frac{u_1}{u_1-p_1}(\frac{\sigma_1-\sigma_2}{d}-\frac{p_1}{u_1}(\frac{1}{p_1}- \frac{1}{p_2}) )+\varepsilon}.
\nonumber\end{align}
\eli
\end{theorem}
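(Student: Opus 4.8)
The plan is to treat the two parts separately, exactly mirroring the structure already used for Theorem~\ref{ek_classical}, but now drawing on the new ingredients prepared above — Lemma~\ref{l:u2p1}, Lemma~\ref{idealmm} (which does \emph{not} apply here, since we are in the genuinely Morrey regime $p_1<p_2$, $\frac{p_1}{u_1}<\frac{p_2}{u_2}$) and, above all, the two-step estimate of Proposition~\ref{slargebis} together with the $\id_j$-norm bound \eqref{1503-a} of Lemma~\ref{lemma15030}. The overall scheme is always: decompose $\id=\sum_j \id_j$ into levels, bound $\lrie(\id_j)$ via the one-level estimate, sum up with the $\varrho$-triangle inequality \eqref{3-2-19l} over $P_M$ and $Q_M$, and finally convert back to $e_k$; for the lower bounds, restrict to a single level $j$ and invoke the lower estimates for $e_k(\id_j)$ from Lemma~\ref{lemma1503} or Lemma~\ref{l:u2p1}.

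For part (i), the upper bound $e_k\lesssim k^{-(\sigma_1-\sigma_2)/d}$ is precisely the content of Proposition~\ref{slargebis}, whose hypotheses are exactly $p_1<p_2$, $\frac{p_1}{u_1}<\frac{p_2}{u_2}$ and $\frac{\sigma_1-\sigma_2}{d}>\frac{1}{p_1}-\frac{1}{p_2}$, all granted here. For the matching lower bound I would use \eqref{ek_below_class} from Remark~\ref{seq-n-b}: the chain $\tilde b^{\sigma_1}_{u_1,q_1}\hookrightarrow \mbet\hookrightarrow\mbzt\hookrightarrow\tilde b^{\sigma_2}_{p_2,q_2}$ together with \eqref{e-multi} and the classical $e_k(\tilde b^{\sigma_1}_{u_1,q_1}\hookrightarrow\tilde b^{\sigma_2}_{p_2,q_2})\sim k^{-(\sigma_1-\sigma_2)/d}$ gives $e_k(\mbet\hookrightarrow\mbzt)\gtrsim k^{-(\sigma_1-\sigma_2)/d}$ in all cases of compactness. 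Together these yield \eqref{morreyslarge}.

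For part (ii) — the surprising case — the exponent changes to $\alpha=\frac{u_1}{u_1-p_1}\big(\frac{\sigma_1-\sigma_2}{d}-\frac{p_1}{u_1}(\frac{1}{p_1}-\frac{1}{p_2})\big)$. The upper bound: on each level $j$, Lemma~\ref{lemma15030}\eqref{1503-a} gives $\|\id_j:\mmbet\to\mmbzt\|\le 2^{jd(\frac{1}{u_2}-\frac{p_1}{u_1p_2})}$, hence $e_k(\id_j:\mbet\to\mbzt)\le 2^{-j\delta}\,2^{jd(\frac{1}{u_2}-\frac{p_1}{u_1p_2})}$ with $\delta=\sigma_1-\sigma_2-\frac{d}{u_1}+\frac{d}{u_2}$; the exponent $d(\frac{1}{u_2}-\frac{p_1}{u_1p_2})-\delta = -(\sigma_1-\sigma_2)+d\,\frac{p_1}{u_1}(\frac1{p_1}-\frac1{p_2})\cdot\frac{?}{}$ — one checks it equals $-d\cdot\frac{u_1-p_1}{u_1}\cdot\frac{1}{p_2}\cdot(\dots)$; the point is that in this regime $\delta - d(\frac{1}{u_2}-\frac{p_1}{u_1p_2})>0$ is \emph{not} available when $\frac{\sigma_1-\sigma_2}{d}\le\frac{1}{p_1}-\frac{1}{p_2}$, so a naive summation of $\lrie(\id_j)$ over all $j$ diverges and one cannot get the $k^{-(\sigma_1-\sigma_2)/d}$ rate. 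Instead I would optimise the level of truncation: split $\id=P_M+Q_M$, use the trivial bound $e_k(P_M)\le\sum_{j\le M}\|\id_j\|$-type estimate combined with $e_k(\id_j)\le c\,2^{-j\delta}2^{jd(\frac{1}{u_2}-\frac{p_1}{u_1p_2})}$ for the low levels and the decay $e_k(\id_j)\lesssim 2^{-j\delta+\dots}$ for the high levels $Q_M$, and balance $k$ against $M$. The extra $\varepsilon$ in $k^{-\alpha+\varepsilon}$ comes from the logarithmic factors in Sch\"utt's estimates \eqref{Schuett} and from summing geometric-type series that are only \emph{almost} summable at the critical exponent — this is why one cannot remove it by this method. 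For the lower bound $e_k\gtrsim k^{-\alpha}$ I would fix a single scale $j$ and test against $e_k(\id_j:\mmbet\to\mmbzt)$, which by Lemma~\ref{lemma1503} (valid once $k\gtrsim 2^{jd}$) behaves like $2^{-k2^{-jd}}2^{jd(\frac1{u_2}-\frac1{u_1})}$; after inserting the renormalisation factor $2^{-j\delta}$ and choosing $j\sim j(k)$ so that $k\sim 2^{jd}$ (up to a constant), optimising over $j$ produces exactly the exponent $\alpha$. The key algebraic identity to verify is that the exchange $\frac{u_1}{u_1-p_1}$ arises precisely from solving the balance equation $-j\delta + jd(\frac{1}{u_2}-\frac{p_1}{u_1p_2}) = $ (decay rate) $\cdot\log k$ with $k\sim 2^{jd}$.

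The main obstacle, I expect, is the sharpness of the upper bound in part (ii): one must be careful that the interpolation trick of Proposition~\ref{slargebis} (interpolating $m^{2^{jd}}_{u_1,p_1}\hookrightarrow\ell_\infty^{2^{jd}}$ with the identity, with $\theta=p_1/p_2$) is applied \emph{per level} and then the levels are recombined with the correct truncation parameter $M=M(k)$; getting the bookkeeping of the two competing geometric sums (over $j\le M$ and over $j>M$) to meet at the right power of $k$, and tracking exactly where the unavoidable $k^{\varepsilon}$ enters, is the delicate part. The lower bound, by contrast, is essentially a one-scale computation and should go through cleanly using Lemma~\ref{lemma1503} and the scaling relation $e_{2^{jd}}(\mmbet\hookrightarrow\mmbzt)\le 2^{j\delta}e_{2^{jd}}(\mbet\hookrightarrow\mbzt)$ already used in Substep~1.1 of the proof of Theorem~\ref{ek_classical}.
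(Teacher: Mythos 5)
Your part (i) is fine: the upper bound is exactly Proposition~\ref{slargebis}, and the lower bound via the chain \eqref{ek_below_class} works under \eqref{bd3acompn} (the paper instead quotes Lemma~\ref{lemma1503}; both give $k^{-\frac{\sigma_1-\sigma_2}{d}}$). For the upper bound in (ii) you depart from the paper, which interpolates (via Theorem~\ref{ThmInterpolation} and \eqref{ek_ipol}) between $\widetilde{n}^{\tau_1}_{u_1,p_1,v_1}\hookrightarrow\mbzt$ with $\tau_1-\sigma_2>d(\frac1{p_1}-\frac1{p_2})$, where part (i) applies, and the limiting continuous embedding at $\tau_2=\sigma_2+d\frac{p_1}{u_1}(\frac1{p_1}-\frac1{p_2})$; the $\varepsilon$-loss comes from having to keep $\tau_1$ strictly above the critical line. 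Your direct level-by-level balancing can be made to work, but your splitting is stated the wrong way round: the operator-norm bound $\|\id_j\|\le 2^{-jd\beta}$, $\beta=\frac{\sigma_1-\sigma_2}{d}-\frac{p_1}{u_1}(\frac1{p_1}-\frac1{p_2})>0$, must be used on the \emph{tail} $Q_M$ (giving $\|Q_M\|\lesssim 2^{-Md\beta}$), while the head $P_M$ is controlled by the per-level estimate \eqref{uu1d}, i.e.\ $\lrie(\id_j)\lesssim 2^{jd(\frac1r-\frac{\sigma_1-\sigma_2}{d})}$ for any $\frac1r>\frac1{p_1}-\frac1{p_2}$ (Step 1 of Proposition~\ref{slargebis} does not need \eqref{bd3acompsmall}); optimising $M$ against $k$ and letting $\frac1r\downarrow\frac1{p_1}-\frac1{p_2}$ then indeed yields $k^{-\alpha+\varepsilon}$, so this is a legitimate alternative to the interpolation argument.

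The genuine gap is the lower bound in (ii), which is the heart of the theorem. You propose to take $k\sim 2^{jd}$ and invoke Lemma~\ref{lemma1503}; but then $e_k(\mbet\hookrightarrow\mbzt)\ge 2^{-j\delta}e_k(\id_j)\sim 2^{-j\delta}2^{jd(\frac1{u_2}-\frac1{u_1})}=2^{-j(\sigma_1-\sigma_2)}\sim k^{-\frac{\sigma_1-\sigma_2}{d}}$, which is the \emph{classical} lower bound and is strictly weaker than the claimed $ck^{-\alpha}$, since $\alpha<\frac{\sigma_1-\sigma_2}{d}$ whenever $\frac{\sigma_1-\sigma_2}{d}<\frac1{p_1}-\frac1{p_2}$. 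No choice of $j$ within the range $k\gtrsim 2^{jd}$ where Lemma~\ref{lemma1503} is valid can do better, because the factor $2^{-k2^{-jd}}$ collapses once $k\gg 2^{jd}$; and your "balance equation" with $k\sim 2^{jd}$ produces the exponent $\beta$, not $\alpha=\frac{u_1}{u_1-p_1}\beta$. The correct argument (the paper's Step 3) tests at $k_j=\whole{2^{jd(1-\frac{p_1}{u_1})}}\ll 2^{jd}$, where Lemma~\ref{lemma1503} is \emph{not} applicable, and instead factorises $\ell_\infty^{k_j}\to\ell_{p_2}^{k_j}$ through $\mmbet\to\mmbzt$ by means of a sparse family of $k_j$ unit cubes (the sequence $\lambda^{(j)}$ constructed in \cite{HaSk-sm}) on which the $\mmbet$-quasi-norm reduces to the sup-norm, while the projection onto $\ell_{p_2}^{k_j}$ costs only $2^{jd(\frac1{p_2}-\frac1{u_2})}$; Pietsch's estimate \eqref{Pietsch} for $\ell_\infty^{k_j}\to\ell_{p_2}^{k_j}$ then gives $e_{k_j}(\mmbet\hookrightarrow\mmbzt)\gtrsim 2^{jd(\frac1{u_2}-\frac{p_1}{u_1p_2})}$, hence $e_{k_j}(\mbet\hookrightarrow\mbzt)\gtrsim 2^{-jd\beta}=k_j^{-\alpha}$. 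This sparse-subset construction, which exhibits an $\ell_\infty$-like block of dimension only $2^{jd(1-p_1/u_1)}$ inside $\mmbet$, is the missing idea; without it your argument proves only $e_k\gtrsim k^{-\frac{\sigma_1-\sigma_2}{d}}$ and therefore does not establish \eqref{morreyssmall-2}, nor the failure of the classical rate.
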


\begin{proof}
Recall that the condition \eqref{bd3acompn}, that is,  
\begin{equation}
\max\Big\{0,\frac{1}{u_1} - \frac{1}{u_2}\Big\} 
< \frac{p_1}{u_1} \Big(\frac{1}{p_1}- \frac{1}{p_2}\Big)  
\end{equation}
is equivalent to  $p_1 < p_2$   and $\frac{p_2}{u_2} > \frac{p_1}{u_1}$. All other cases are already covered by Theorem~\ref{ek_classical}.

{\em Step 1.}\quad We prove (i).
The statement follows immediately from Proposition~\ref{slargebis} and Lemma~\ref{lemma1503}. 

{\em Step 2.}\quad We first show the upper estimate in (ii). Here we make use of the interpolation result, Theorem~\ref{ThmInterpolation}, in its sequence space version, together with the interpolation property of entropy numbers \eqref{ek_ipol}.

Let $\varepsilon_1>0$ and $\tau_1=\sigma_2+d(\frac{1}{p_1}-\frac{1}{p_2})+d\varepsilon_1$ be such that $\tau_1 -\sigma_2 > d(\frac{1}{p_1}-\frac{1}{p_2})$. Hence  we can apply (i) to the compact embedding 
$\id_1: \widetilde{n}^{\tau_1}_{u_1,p_1,v_1} \hookrightarrow \mbzt$, 
where $0<v_1\leq\infty$ is arbitrary. Thus \eqref{morreyslarge} leads to
\beq
e_k(\id_1: \widetilde{n}^{\tau_1}_{u_1,p_1,v_1} \hookrightarrow \mbzt)\leq \ c\ k^{-\frac{\tau_1-\sigma_2}{d}} = c_{\varepsilon_1} \ k^{-(\frac{1}{p_1}-\frac{1}{p_2})- \varepsilon_1}.
\eeq
We choose $\ \tau_2=\sigma_2+d \frac{p_1}{u_1}(\frac{1}{p_1}-\frac{1}{p_2})\ $ and $v_2$ with 
$\  0<v_2\leq \min(1,\max(1,\frac{u_2}{u_1})\frac{p_1}{p_2})q_2$. 
Then by \cite[Thm.~3.1]{hs12b} the embedding
$\id_2 : \widetilde{n}^{\tau_2}_{u_1,p_1,v_2} \hookrightarrow \mbzt $ 
is continuous, that is,

$\ e_k(\id_2: \widetilde{n}^{\tau_2}_{u_1,p_1,v_2} \hookrightarrow \mbzt)\leq \ c$.
Now we determine the number $\theta\in (0,1)$ appropriately such that
$\sigma_1 = (1-\theta)\tau_1 + \theta \tau_2$, 
which is always possible in case (ii). The sequence space version of Theorem~\ref{ThmInterpolation}(i) yields then
\beq
\mbet = \left(\widetilde{n}^{\tau_1}_{u_1,p_1,v_1}, \ 
\widetilde{n}^{\tau_2}_{u_1,p_1,v_2}\right)_{\theta, q_1}\ .
\eeq
Thus the interpolation property \eqref{ek_ipol} leads to
\[
e_{2k}(\mbet\hookrightarrow \mbzt) \leq \ c \ e_k(\id_1)^{1-\theta} e_k(\id_2)^\theta \leq \ c'_{\varepsilon_1}\ k^{-(1-\theta)(\frac{1}{p_1}-\frac{1}{p_2} + \varepsilon_1)}
\]
and it remains to show that 
\[
-\frac{u_1}{u_1-p_1} \left(\frac{\sigma_1-\sigma_2}{d}-\frac{p_1}{u_1}\left(\frac{1}{p_1}- \frac{1}{p_2} \right) \right)+\varepsilon = -(1-\theta)\left(\frac{1}{p_1}-\frac{1}{p_2} +\varepsilon_1\right)
\]
and appropriately chosen $\varepsilon$ (in dependence on $\varepsilon_1$). {Moreover, $\varepsilon_1\rightarrow 0$ implies $\varepsilon\rightarrow 0$}.  However, 
this is obvious by straightforward calculation.

{\em Step 3.}\quad It remains to show the lower estimate in (ii).
Let $k_j=\whole{2^{jd(1-\frac{p_1}{u_1})}}$. We consider the following commutative diagram
\[ 
\begin{CD}
\ell^{k_j}_{\infty} @>\id>> \ell^{k_j}_{p_2}\\
@V{P_1}VV @AA{P_2}A\\
\mmbet @>\id>> \mmbzt\, .
\end{CD}
 \]

The operator $P_1$ is defined in the following way. Let $\lambda^{(j)}=\big(\lambda^{(j)}_\ell\big)$ be a sequence constructed in Substep 2.2 of the proof of Theorem 2.1 in \cite{HaSk-sm}. The sequence has $k_j$ elements equal to $1$ and the remaining elements are $0$. Moreover $\|\lambda^{(j)}|\mmbet\|=1$.  Let $T$ be a one-to-one correspondence between the set $\{1,\ldots , k_j\}$ and the cubes $Q_{0,\ell}$ such that $\lambda^{(j)}_\ell=1$. For  $\mu = (\mu_m)\in \ell^{k_j}_{\infty}$ we put
\[
P_1(\mu)_\ell = 
\begin{cases} 
\mu_m & \text{if}\qquad T(m)=Q_{0,\ell}\\
0 & \text{otherwise}. 
\end{cases}
\]
Then $P_1$ is a linear operator mapping $\ell^{k_j}_{\infty}$ into $\mmbet$ and it norm equals $1$.  

The operator $P_2$ is a projection,  $P_2(\lambda)_m = \lambda_\ell$ if $T(m)=Q_{0,\ell}$. Then Lemma~\ref{lemma15030} implies that $\|P_2\|\le 2^{jd(\frac{1}{p_2}-\frac{1}{u_2})}$.  We conclude from the above diagram and Sch\"utt's estimates that 
\begin{equation}
c\ 2^{jd(\frac{1}{u_2}- \frac{p_1}{u_1p_2})} \le e_{k_j}(\mmbet\hookrightarrow \mmbzt). 
\end{equation} 
This finally leads to 
 \begin{align*}
   c\ k_j^{- \frac{u_1}{u_1-p_1}(\frac{\sigma_1-\sigma_2}{d}-\frac{p_1}{u_1}(\frac{1}{p_1}- \frac{1}{p_2}) )} & \ = c\ 2^{-jd(\frac{\sigma_1-\sigma_2}{d}-\frac{p_1}{u_1}(\frac{1}{p_1}- \frac{1}{p_2}) )} \\
   & \ \le    e_{k_j} \big(\mbet\hookrightarrow \mbzt\big) .
 \end{align*}
 \end{proof}

\begin{ownremark}
  Note that the `classical' exponent $\ \frac{\sigma_1-\sigma_2}{d}\ $ of the asymptotic decay of entropy numbers, as it appears in Theorem~\ref{ek_classical} already, is extended in part (i) of the above theorem for $\sigma_1-\sigma_2$ `large enough', that is, $\ \sigma_1-\sigma_2 > d(\frac{1}{p_1}-\frac{1}{p_2})$. However, for small differences $\sigma_1-\sigma_2$ and in the proper Morrey case for the source space as dealt with in (ii),  
which does not exist when $p_1=u_1$, we can at the first time observe a completely different behaviour for the asymptotic decay of the entropy numbers. In particular, if 
$\frac{p_1}{u_1} \big(\frac{1}{p_1}- \frac{1}{p_2}\big) <\frac{\sigma_1-\sigma_2}{d}<  \frac{1}{p_1} -\frac{1}{p_2}, $
then 
$\frac{u_1}{u_1-p_1}\left(\frac{\sigma_1-\sigma_2}{d}-\frac{p_1}{u_1}\left(\frac{1}{p_1}- \frac{1}{p_2}\right) \right) < \frac{\sigma_1-\sigma_2}{d}$. 
So in that case the power $\frac{\sigma_1-\sigma_2}{d}$ does not describe the asymptotic behaviour of entropy numbers, and their decay is much slower than usual. 

On the other hand, if $\ \sigma_1-\sigma_2=d(\frac{1}{p_1}-\frac{1}{p_2})$, that is, when the above cases (i) and (ii) meet, then the upper estimate in \eqref{morreyssmall-2} reads as
\[
  e_k \big(\mbet\hookrightarrow \mbzt\big) \le c_\varepsilon   k^{-\frac{\sigma_1-\sigma_2}{d}+\varepsilon} ,  
\]
that is, the same exponent (up to $\varepsilon$) as in \eqref{morreyslarge}. This gives some hint that the  exponent in \eqref{morreyssmall-2} might be close to the precise description. We postpone some further discussion to Remark~\ref{rem-limiting} below.
\end{ownremark} 



\section{Entropy numbers of compact embeddings in smoothness Morrey spaces on bounded domains}\label{entro_func}
{First we note that the asymptotic behaviour of the entropy numbers $e_k \big(\id_{\mathcal{A}}: \MAe(\Omega)\hookrightarrow \MAz(\Omega)\big)$ for an  arbitrary domain, i.e., an  open bounded set $\Omega$ in $\Rn$, is the same as for a $C^\infty$ bounded domain. So we can concentrate on  $C^\infty$ bounded domains afterwards.

\begin{lemma}\label{entlemma}
Let $\Omega$ be a non-empty open bounded set and $B_e(0,1)$ be an open unit ball in $\R^d$. 
Then the embedding $\MAe(\Omega)\hookrightarrow \MAz(\Omega)$ is compact if and only if the embedding $\MAe(B_e(0,1))\hookrightarrow \MAz(B_e(0,1))$ is compact. Moreover, in that case, 
\begin{equation}\label{entlemma1}
e_k\big(\MAe(\Omega)\hookrightarrow \MAz(\Omega)\big)\sim e_k\big(\MAe(B_e(0,1))\hookrightarrow \MAz(B_e(0,1)))\big) .
\end{equation}  
\end{lemma}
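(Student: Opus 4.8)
\textbf{Proof plan for Lemma~\ref{entlemma}.}

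The natural strategy is a sandwich/monotonicity argument: I would first reduce the general bounded open set $\Omega$ to two comparison balls. Since $\Omega$ is non-empty, open and bounded, there exist radii $0<r<R$ and a point $x_0$ such that $B_e(x_0,r)\subset\Omega\subset B_e(x_0,R)$. Translation invariance of the $\Rn$-spaces (hence of the restriction spaces up to isometry) together with the dilation behaviour of the quasi-norms $\|\cdot\mid\MA(\Rn)\|$ shows that $\MAe(B_e(x_0,\rho))$ is isomorphic (with constants depending only on $\rho$, $d$, and the space parameters) to $\MAe(B_e(0,1))$, and likewise for the target space; the same scaling applies simultaneously to source and target, so the embedding $\MAe(B_e(x_0,\rho))\hookrightarrow\MAz(B_e(x_0,\rho))$ has, up to equivalence, the same entropy numbers as $\MAe(B_e(0,1))\hookrightarrow\MAz(B_e(0,1))$. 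Thus it suffices to compare $\Omega$ with a fixed inner ball and a fixed outer ball.

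The second ingredient is the behaviour of these spaces under restriction to smaller sets. The restriction maps $\MAe(B_e(x_0,R))\to\MAe(\Omega)\to\MAe(B_e(x_0,r))$ are all bounded with norm $\le 1$ (the restriction of an extension is again an extension, so the infimum defining the quasi-norm can only increase when passing to a bigger set). The delicate point, which I expect to be the main obstacle, is the reverse: one needs a bounded \emph{extension} operator from functions on the inner ball $B_e(x_0,r)$ to functions on the outer ball $B_e(x_0,R)$ (or on $\Rn$) within the $\MA$-scale, so that the restriction $\MAe(B_e(x_0,R))\to\MAe(B_e(x_0,r))$ actually has a bounded right inverse and the three spaces are, in the appropriate sense, ``uniformly comparable''. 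For balls (which are $C^\infty$ domains) such a common extension operator exists by \cite[Thm.~5.4]{Saw2010}, and it can be chosen to act simultaneously and boundedly on both $\MAe$ and $\MAz$; I would invoke exactly this. With it in hand, one factorises the identity on $\Omega$ through the balls.

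Concretely, write $\id_\Omega:\MAe(\Omega)\hookrightarrow\MAz(\Omega)$ and, for $\rho\in\{r,R\}$, $\id_\rho:\MAe(B_e(x_0,\rho))\hookrightarrow\MAz(B_e(x_0,\rho))$. Using the restriction operators $\mathrm{re}$ and the common extension operator $\mathrm{ext}$ one gets the two factorisations
\[
\id_\Omega = \mathrm{re}\circ \id_R\circ \mathrm{ext},\qquad
\id_r = \mathrm{re}\circ \id_\Omega\circ \mathrm{ext},
\]
with all the $\mathrm{re}$, $\mathrm{ext}$ bounded (norms independent of $k$). By the multiplicativity property \eqref{e-multi} of entropy numbers this yields
\[
e_k(\id_\Omega)\le c\,e_k(\id_R),\qquad e_k(\id_r)\le c\,e_k(\id_\Omega),\quad k\in\nat,
\]
and, combining with Step~1 which identifies $e_k(\id_r)\sim e_k(\id_R)\sim e_k(\id:\MAe(B_e(0,1))\hookrightarrow\MAz(B_e(0,1)))$, one obtains $e_k(\id_\Omega)\sim e_k(\id:\MAe(B_e(0,1))\hookrightarrow\MAz(B_e(0,1)))$, which is \eqref{entlemma1}. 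The compactness equivalence is then immediate, since $e_k\to 0$ characterises compactness; alternatively it follows directly from the two-sided factorisation already. The only genuinely technical step is justifying the uniform common extension operator for the pair of Morrey scales on balls, but this is precisely what the cited extension theorem in \cite{Saw2010} provides, so I would not redo it.
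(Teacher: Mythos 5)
Your reduction of all balls to the unit ball via translations and dilations, and your lower estimate $e_k(\id_r)\le c\,e_k(\id_\Omega)$, are correct and consistent with the paper: for that direction the extension operator you invoke is only needed for the \emph{inner ball}, which is a $C^\infty$ domain, so \cite[Thm.~5.4]{Saw2010} applies and the linear factorisation through $\id_\Omega$ is legitimate. The gap is in the upper estimate. The factorisation $\id_\Omega=\re\circ\id_R\circ\ext$ requires a bounded \emph{linear} extension operator $\ext:\MAe(\Omega)\to\MAe(B_e(x_0,R))$, i.e.\ an extension operator for the arbitrary bounded open set $\Omega$ itself. No such operator is provided by the theorem you cite (which concerns $C^\infty$ domains), and none can be expected in this generality -- indeed the entire purpose of this lemma is to pass from $C^\infty$ domains to arbitrary bounded open sets. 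Since the multiplicativity \eqref{e-multi} applies to compositions of bounded linear operators, the inequality $e_k(\id_\Omega)\le c\,e_k(\id_R)$ does not follow from your factorisation as written.

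The repair, and the route the paper actually takes, is to drop the linear factorisation for this direction and argue directly with coverings, exploiting that $\MAe(\Omega)$ is \emph{defined} by restriction: every $f$ in its unit ball admits some (not linearly selected) extension $g\in\MAe(\Rn)$ with $\|g\mid\MAe(\Rn)\|\le c$, and after multiplying by a cutoff (a bounded operation on $\MA(\Rn)$) one may assume $\supp g\subset B_e(0,R)$, which gives an equivalent quasi-norm on $\MA(\Omega)$. Covering the image of the $c$-ball of $\MAe(B_e(0,R))$ in $\MAz(B_e(0,R))$ by $2^{k-1}$ balls of radius $c\varepsilon$ with centres $g_j$, and restricting the centres to $\Omega$, one gets $\|f-g_m\vert_\Omega\mid\MAz(\Omega)\|\le c\varepsilon$ for a suitable $m$ because restriction is quasi-norm decreasing; hence $e_k(\id_\Omega)\le C\,e_k(\id_R)$. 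So your overall architecture (sandwich between two balls, scaling, two one-sided comparisons) is the right one, but the upper estimate must be run at the level of coverings of sets rather than factorisations of operators.
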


\begin{proof}
The lemma holds for any ball  $\Omega=B_e(x,R)$, centred at $x\in \R^d$ with radius $R>0$, since translations and dilations define bounded operators in the spaces $\MA(\R^d)$. To  prove the  statement for general $\Omega$ it is sufficient to prove \eqref{entlemma1} since an embedding is compact if, and only if, its entropy numbers $e_k$ tend to zero for $k\rightarrow \infty$. 

Let $B_e(0,R)$ be an open  ball such that  $\overline{\Omega} \subset B_e(0,R) $. Multiplication by a smooth compactly supported function defines a bounded operator in $\MA(\R^d)$,  therefore the expression  
\[  \|f|\MA(\Omega)\|_* = \inf\left\{ \|g|\MA(\R^d)\|: \quad  f=g|_\Omega\quad \text{and}\quad \supp g\subset B_e(0,R)\right\}\] 
is an equivalent norm in $\MA(\Omega)$. 
Let $B_i(g,r)$  denote a ball in  the space $\mathcal{A}^{s_i}_{u_i,p_i,q_i}(\Omega)$ centred at $g$ with radius $r$, and let $\widetilde{B}_i(g,r)$  denote a corresponding ball in $\mathcal{A}^{s_i}_{u_i,p_i,q_i}(B_e(0,R))$, $i=1,2$.

Let $c>1$ and $f\in B_1(0,1)$. Then there exists some $g\in \MAe(B_e(0,R))$ such that $f=g|_\Omega$ and  $\|g|\MAe(B_e(x,R))\| \le c\|f|\MAe(\Omega)\|_* < c$. Let $\varepsilon>0$ and choose $k\in\nat$  sufficiently large such that
\[e_k\big(\MAe(B_e(x,R))\hookrightarrow \MAz(B_e(x,R))\big)  <\varepsilon. \]
The definition of entropy numbers implies that there are $g_j\in  \MAz(B_e(x,R))$, $j=1,\ldots, 2^{k-1}$, such that
\begin{equation}\label{entlemma2}
\widetilde{B}_1(0,c)\subset \bigcup_{j=1}^{2^{k-1}} \widetilde{B}_2(g_j, c\varepsilon). 
\end{equation}
Thus there exists $g_m$, $m\in \{1, \dots, 2^{k-1}\}$, with $\|g-g_m| \MAz(B_e(x,R))\|< c\varepsilon$. Then 
$\| f -g_m|_\Omega|\MAz(\Omega) \|_* 
\le \| g -g_m|\MAz(B_e(x,R)) \|\le c\varepsilon$.
 This implies that there exists a positive constant $C$ independent of $k$ such that $e_k\big(\MAe(\Omega)\hookrightarrow \MAz(\Omega)\big)\le C e_k\big(\MAe(B_e(0,1))\hookrightarrow \MAz(B_e(0,1))\big)$.

To prove the opposite inequality we can take a ball $B(x,r)$ such that  $\overline{B(x,r)}\subset \Omega$ and argue in a similar way as above. 
\end{proof}}

Now we can {concentrate on} 
the entropy numbers of the compact embedding characterised by Theorem~\ref{comp}.
\ignore{\begin{equation}
  \label{id_comp_a}
  \id_{\mathcal{A}} : \MAe(\Omega) \hookrightarrow  \MAz(\Omega), 
 \end{equation} 
where $s_i\in \R$, $0<q_i\leq\infty$, $0<p_i\leq u_i<\infty$, $i=1,2$,  with
\begin{equation}
  \label{cond-comp}
\frac{s_1-s_2}{d} > \max\bigg\{0,\frac{1}{u_1} - \frac{1}{u_2}, 
\frac{p_1}{u_1} \Big(\frac{1}{p_1}- \frac{1}{p_2}\Big) \bigg\},
\end{equation}
recall Theorem~\ref{comp}.  }

{\em Preparation.}\quad Recall that $\Omega\subset\Rn$ is a bounded $C^\infty$ domain and the spaces are defined by restriction, see Definition~\ref{D-spaces-Omega}. Let $\Omega_t= \{x\in\Rn: \dist(x, \Omega)< t\}$, for some $t>0$.  We choose a dyadic cube $Q$ (applying some appropriate dilations or translations first, if necessary) such that 
$\ \supp \psi_{i,\nu,m} \subset Q\ $  \text{if}  $\ \supp \psi_{i,\nu,m} \cap \Omega_t \not = \emptyset$,   
and $\ 
\supp \phi_{0,m} \subset Q\ $ \text{if} $\  \supp \phi_{0,m}\cap \Omega_t \not = \emptyset$.  
Let $h\in C^\infty_0(\Rn)$ be a test function such that $\supp h \subset \Omega_t$  and $h(x)=1$ for any $x\in \Omega_{t/2}$. We choose $N\in \N$ such that 
$N^{-1}\le  p_i\le u_i $, $N^{-1}< q_i$ and $|s_i|< N $, $i= 1,2$.  Then there exists an extension operator $\ext_N$, common for both spaces 
$\MBe(\Omega)$ and $\MBz(\Omega)$, with  $\re \circ M_h = \re$, that is, 
$\re \circ M_h \circ \ext_N = \id $ on ${\cal N}^{s_i}_{u_i,p_i,q_i}(\Omega)$, cf. \cite{Saw2010}. Here $M_h$ 
is the natural pointwise multiplication mapping $\ M_h: f \ni  \mathcal{S}(\Rn) \mapsto h \cdot f$.
Now using the wavelet characterisation of the spaces $\MB(\R^d)$ by Daubechies wavelets, cf. Theorem~\ref{wavemorrey},   in particular the isomorphism $T$ defined by  \eqref{wavemorreyrem1}, one can  factorise the embeddings for function spaces via the embeddings of the corresponding sequence spaces. More precisely, the following diagram is commutative 
\[ 
\begin{CD}
\MBe(\Omega )@>\id>> \MBz(\Omega )\\
@V{T\circ M_h\circ\ext_N}VV @AA{\re\circ T^{-1}}A\\
 \mbet( Q )@>\id>> \mbzt(Q)\, ,
\end{CD}
 \]
 with $\sigma_i=s_i+\frac{d}{2}$. Thus the multiplicativity of entropy numbers \eqref{e-multi} immediately yields
 \[
 e_k(\id:\MBe(\Omega) \to \MBz(\Omega))\leq \ c\ e_k(\id:\mbet(Q)\to \mbzt(Q)).
 \]
 Conversely, by the diffeomorphic properties of Besov-Morrey spaces, using translations and dilations if necessary we can assume that the domain $\Omega$ satisfies the following conditions: there exists $\nu_0\ge 0$ such that $Q_{\nu_0,0}\subset \Omega$, \quad and $\ \supp \psi_{i,\nu, m} \subset \Omega$ if $\ Q_{\nu,m} \subset Q_{\nu_0,0}$. 
Let $\widetilde{T}$ denote the restriction of the isomorphism $T^{-1}$ to the subspaces $\mb(Q_{\nu_0,0}) \subset \mb(\Rn)$,  cf. \eqref{wavemorreyrem1}. 
Then $\ \widetilde{T}(\lambda)\in  \MB(\Rn)$ and  $\supp \widetilde{T}(\lambda)\subset \Omega$  \text{for any} $\ \lambda\in \mb(Q_{\nu_0,0})$.  Moreover, there exists a positive $\varepsilon>0$ such that for any $\lambda\in \mb(Q_{\nu_0,0})$, $\dist(\supp \widetilde{T}(\lambda),\Rn\setminus \Omega) > \varepsilon$.  If we denote by $P: \mb(\Rn)\rightarrow \mb(Q_{\nu_0,0})$  the usual projection in sequence spaces, then we get the  following commutative diagram  
\[ 
\begin{CD}
\mbet( Q_{\nu_0,0} )@>\id>> \mbzt(Q_{\nu_0,0})\\
@V{\re \circ \widetilde{T}}VV @AA{P\circ T\circ\ext_N}A\\
 \MBe(\Omega )@>\id>> \MBz(\Omega )\, .
\end{CD}
 \]
Thus obvious arguments immediately yield
 \begin{align*}
 e_k(\id:\mbet(Q)\to \mbzt(Q)) &\sim e_k(\id:\mbet(Q_{\nu_0,0})\to \mbzt(Q_{\nu_0,0}))  \\
 & \le \ c e_k(\id:\MBe(\Omega) \to \MBz(\Omega))\ .
 \end{align*}

Please note that we may take the same system of wavelets, and thus the same operator $\widetilde{T}$, for $\MBe(\Rn)$ and $\MBz(\Rn)$. 

Hence upper and lower estimates for the entropy numbers of sequence space embeddings lead immediately to upper and lower estimates for the  corresponding function space embeddings, using the multiplicativity of entropy numbers \eqref{e-multi} again. So in view of our results in Section~\ref{entro_seq} and the above arguments we have the following results.


\begin{theorem}\label{ek_classical_fu}
{Let $\Omega$ be an arbitrary bounded domain in $\R^d$. }	Let  $s_i\in \R$, $0<q_i\leq\infty$, $0<p_i\leq u_i<\infty$, $i=1,2$. Assume that 
\bli
\item[{\hfill\bf (i)\hfill}] either
	\begin{equation*}
	\frac{s_1-s_2}{d} > \max\bigg\{0,\frac{1}{u_1} - \frac{1}{u_2}, 
	\frac{p_1}{u_1} \Big(\frac{1}{p_1}- \frac{1}{p_2}\Big) \bigg\}= 0,
	\end{equation*}
\item[{\hfill\bf (ii)\hfill}] or
\begin{equation*}
\frac{s_1-s_2}{d} > \max\bigg\{0,\frac{1}{u_1} - \frac{1}{u_2}, 
\frac{p_1}{u_1} \Big(\frac{1}{p_1}- \frac{1}{p_2}\Big) \bigg\} = \frac{1}{u_1} - \frac{1}{u_2}>0.
\end{equation*}
\eli
Then
\begin{equation}\label{ek-class-fu}
    e_k \big(\id_{\mathcal{A}}: \MAe(\Omega)\hookrightarrow \MAz(\Omega)\big) \sim  k^{-\frac{s_1-s_2}{d}} .  
\end{equation}
\end{theorem}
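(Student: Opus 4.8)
The strategy is to transfer the entropy number estimates from the sequence space level (Theorem~\ref{ek_classical}) to the function space level by means of the wavelet decomposition, using the commutative diagrams set up in the \emph{Preparation} paragraph preceding the statement. Concretely, I would first reduce to the case of a $C^\infty$ bounded domain: by Lemma~\ref{entlemma} the asymptotic behaviour of $e_k\big(\id_{\mathcal{A}}: \MAe(\Omega)\hookrightarrow \MAz(\Omega)\big)$ is independent of the particular bounded open set $\Omega$, so it suffices to prove \eqref{ek-class-fu} for a ball, and hence for any convenient $C^\infty$ bounded domain, on which the extension operator $\ext_N$ of \cite{Saw2010} and the wavelet characterisation Theorem~\ref{wavemorrey} are available.

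Next, for the upper estimate, I would invoke the first commutative diagram from the Preparation: $\id = (\re\circ T^{-1})\circ\id_{\text{seq}}\circ(T\circ M_h\circ\ext_N)$, where $T$ is the wavelet isomorphism \eqref{wavemorreyrem1}, $M_h$ the multiplier by the cut-off $h$, and the middle arrow is $\id:\mbet(Q)\to\mbzt(Q)$. Since the outer maps are bounded, multiplicativity of entropy numbers \eqref{e-multi} gives
\[
e_k\big(\id:\MBe(\Omega)\to\MBz(\Omega)\big)\le c\, e_k\big(\id:\mbet(Q)\to\mbzt(Q)\big)\le c'\, k^{-\frac{\sigma_1-\sigma_2}{d}},
\]
the last step by Theorem~\ref{ek_classical} together with $\sigma_i=s_i+\frac d2$, so that $\sigma_1-\sigma_2=s_1-s_2$. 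For the lower estimate I would use the second commutative diagram, which factorises the sequence space embedding $\id:\mbet(Q_{\nu_0,0})\to\mbzt(Q_{\nu_0,0})$ through the function space embedding via $\re\circ\widetilde T$ and $P\circ T\circ\ext_N$; again multiplicativity yields $e_k\big(\id:\mbet(Q_{\nu_0,0})\to\mbzt(Q_{\nu_0,0})\big)\le c\, e_k\big(\id:\MBe(\Omega)\to\MBz(\Omega)\big)$, and the left-hand side is $\gtrsim k^{-\frac{\sigma_1-\sigma_2}{d}}$ by Theorem~\ref{ek_classical}. Combining the two bounds gives \eqref{ek-class-fu} for $\mathcal{A}=\mathcal{N}$.

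Finally, to pass from Besov-Morrey to Triebel-Lizorkin-Morrey spaces $\MF(\Omega)$, I would use Sawano's elementary embedding \eqref{elem}, i.e. ${\cal N}^s_{u,p,\min\{p,q\}}\hookrightarrow\MF\hookrightarrow{\cal N}^s_{u,p,\infty}$ (valid on $\Rn$ and hence, by restriction, on $\Omega$); sandwiching $\id_{\mathcal{E}}$ between two $\mathcal N$-embeddings with the same smoothness parameters $s_1,s_2$ and applying multiplicativity of entropy numbers together with the already-established $\mathcal N$-result yields the matching upper and lower bounds for $\id_{\mathcal{E}}$, since the fine parameters $q_i$ do not enter the exponent $-\frac{s_1-s_2}{d}$. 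The main (minor) obstacle here is bookkeeping: one must verify that the regularity $N_1$ of the wavelets and the parameter $N$ in $\ext_N$ can be chosen large enough simultaneously for all spaces involved (this is exactly the choice $N^{-1}\le p_i\le u_i$, $N^{-1}<q_i$, $|s_i|<N$ made in the Preparation), and that the support conditions on the wavelets relative to $Q$ and $Q_{\nu_0,0}$ are met; all of this is routine given the cited results, so there is no genuine analytic difficulty left — the real content was already in Theorem~\ref{ek_classical}.
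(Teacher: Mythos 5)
Your proposal is correct and follows essentially the same route as the paper's own proof: reduction to a $C^\infty$ domain via Lemma~\ref{entlemma}, transfer of the two-sided sequence-space estimate of Theorem~\ref{ek_classical} through the two wavelet commutative diagrams of the Preparation together with multiplicativity \eqref{e-multi}, and the sandwich \eqref{elem} to pass from $\mathcal{N}$ to $\mathcal{E}$. No gaps; the bookkeeping you flag (choice of $N$, $N_1$ and the support conditions) is exactly what the paper also treats as routine.
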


\begin{proof}
{In view of Lemma \ref{entlemma} it is sufficient to consider a $C^\infty$ domain.} In the Besov-Morrey situation, that is, when $\mathcal{A}=\mathcal{N}$, Theorem~\ref{ek_classical} together with our preceding remarks covers the results. Otherwise, when $\mathcal{A}=\mathcal{E}$, then \eqref{elem} (which remains true in the situation when $\Rn$ is replaced by $\Omega$) and the independence of all above assumptions of the fine parameters $q_i$, $i=1,2$, implies the assertions, where we benefit again from the multiplicativity property of entropy numbers \eqref{e-multi}.
\end{proof}

\begin{ownremark}\label{remark-ek-class}
If $u=p$, then $ \MA(\Omega)=\A(\Omega)$, so we can compare our above result with the well-known situation  of the embedding
$
\id_A: 
\Ae(\Omega) \to \Az(\Omega)$, 
where $s_i\in\R$, $ 0<p_i, q_i\leq\infty$ 
($p_i<\infty $ in the $F$-case), $i=1,2$, and the spaces $\ \A(\Omega)\ $ are defined by restriction. Then $\id_A$ is compact, if, and only if, 
$\frac{s_1-s_2}{d} > \max\big\{0,\frac{1}{p_1} - \frac{1}{p_2} \big\}$,
recall  also \eqref{cond-comp}. Note that in this situation the above cases (i) and (ii) of Theorem~\ref{ek_classical_fu} cover all possible cases of compactness already. {Edmunds} and {Triebel} proved in \cite{ET1,ET2} (see also \cite[Thm. 3.3.3/2]{ET}) that  
\[
e_k(\id_A: \Ae(\Omega) \hookrightarrow \Az(\Omega)) \ \sim \ k^{-\frac{s_1-s_2}{d}},\quad k\in\N,
\]
which perfectly coincides with our findings \eqref{ek-class-fu}.
\end{ownremark}

We return to the non-classical case, that is, when $p_1<u_1$ or $p_2<u_2$.

\begin{theorem}\label{reallymorrey_fu}
{Let $\Omega$ be an arbitrary bounded domain in $\R^d$. } Let  $s_i\in \R$, $0<q_i\leq\infty$, $0<p_i\leq u_i<\infty$, $i=1,2$, and   
{\begin{equation}\label{bd3acompfu}
\frac{s_1-s_2}{d} >  \frac{p_1}{u_1} \Big(\frac{1}{p_1}- \frac{1}{p_2}\Big)   >  \max\bigg\{0,\frac{1}{u_1} - \frac{1}{u_2}\bigg\}.
\end{equation}}
\bli
\item[{\upshape\bfseries (i)}] If $\ \frac{s_1- s_2}{d} >  \frac{1}{p_1} -\frac{1 }{p_2}$, then
\begin{equation}
    e_k \big(\id_{\mathcal{A}}: \MAe(\Omega)\hookrightarrow \MAz(\Omega)\big) \sim     k^{-\frac{s_1-s_2}{d}} .  
\end{equation}
\item[{\upshape\bfseries (ii)}]
  If $\ \frac{p_1}{u_1} \Big(\frac{1}{p_1}- \frac{1}{p_2}\Big)< \frac{s_1- s_2}{d} \le  \frac{1}{p_1} -\frac{1 }{p_2}$, then there exists some $c>0$ and for any $\varepsilon>0$ some $c_\varepsilon>0$ such that for all $k\in\nat$, 
\begin{equation}\label{new_morrey_fu}
  c k^{- \frac{u_1}{u_1-p_1}(\frac{s_1-s_2}{d}-\frac{p_1}{u_1}(\frac{1}{p_1}- \frac{1}{p_2}) )} \le    e_k \big(\id_{\mathcal{A}}\big) \le c_\varepsilon k^{- \frac{u_1}{u_1-p_1}(\frac{s_1-s_2}{d}-\frac{p_1}{u_1}(\frac{1}{p_1}- \frac{1}{p_2}) )+\varepsilon}.
\end{equation}
\eli
\end{theorem}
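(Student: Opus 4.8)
The plan is to transfer everything from the sequence-space results of Section~\ref{entro_seq} to the function-space setting, exactly in the pattern already used for Theorem~\ref{ek_classical_fu}. First I would invoke Lemma~\ref{entlemma} to reduce to the case of a bounded $C^\infty$ domain $\Omega$; this immediately handles the claim that the estimates hold for arbitrary bounded domains. Then I would use the \emph{Preparation} discussed above: the wavelet isomorphism $T$ of Theorem~\ref{wavemorrey} (see \eqref{wavemorreyrem1}), the common extension operator $\ext_N$ of \cite{Saw2010}, and the multiplication operator $M_h$, which together give the two commutative diagrams factorising $\id_{\mathcal{N}}: \MBe(\Omega)\to\MBz(\Omega)$ through $\id:\mbet(Q)\to\mbzt(Q)$ in both directions. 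By multiplicativity of entropy numbers \eqref{e-multi} this yields
\[
e_k\big(\id_{\mathcal{N}}:\MBe(\Omega)\hookrightarrow\MBz(\Omega)\big)\ \sim\ e_k\big(\id:\mbet(Q)\to\mbzt(Q)\big),\qquad \sigma_i=s_i+\tfrac d2,
\]
so part (i) follows from Theorem~\ref{reallymorreycase}(i) and part (ii) from Theorem~\ref{reallymorreycase}(ii), noting that $\sigma_1-\sigma_2=s_1-s_2$ and hence all the exponents match verbatim.

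For the Triebel--Lizorkin--Morrey case $\mathcal{A}=\mathcal{E}$ I would argue, as in the proof of Theorem~\ref{ek_classical_fu}, via the Sawano sandwich \eqref{elem}, which remains valid on $\Omega$: one has
\[
{\cal N}^{s_1}_{u_1,p_1,\min\{p_1,q_1\}}(\Omega)\hookrightarrow\MFe(\Omega)\hookrightarrow{\cal N}^{s_1}_{u_1,p_1,\infty}(\Omega),
\]
and similarly for the source space, so that $\id_{\mathcal{E}}$ is squeezed between two embeddings of Besov--Morrey type with the \emph{same} smoothness, primary, and secondary parameters, differing only in the fine index $q$. Since all hypotheses \eqref{bd3acompfu} and the exponents in \eqref{new_morrey_fu} are independent of $q_1,q_2$, the upper bound for $\id_{\mathcal{E}}$ comes from the $\mathcal{N}$-upper bound with fine index $\infty$ in the target, the lower bound from the $\mathcal{N}$-lower bound with fine index $\min\{p_2,q_2\}$ in the target (and correspondingly for the source), again by \eqref{e-multi}. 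This is precisely the mechanism already spelled out in the proof of Theorem~\ref{ek_classical_fu}.

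The only genuinely new point compared with Theorem~\ref{ek_classical_fu} is that the hypothesis \eqref{bd3acompfu} forces $p_1<p_2$ and $\tfrac{p_2}{u_2}>\tfrac{p_1}{u_1}$, so in particular $p_1<u_1$ — the source space is of proper Morrey type — which is exactly the regime where the continuity result \cite[Thm.~3.1]{hs12b} used inside the proof of Theorem~\ref{reallymorreycase}(ii) applies; but since that has already been absorbed into the sequence-space theorem, nothing extra is needed here. I do not expect a serious obstacle: the proof is essentially a transcription. The one place requiring a line of care is making sure the Sawano embedding \eqref{elem} and the extension/restriction machinery of \cite{Saw2010} are genuinely available on the bounded $C^\infty$ domain $\Omega$ with the chosen large $N$ (so that the wavelet order condition of Theorem~\ref{wavemorrey} is met for both source and target simultaneously), and that the auxiliary fine indices $v_1,v_2$ introduced in the interpolation argument of Theorem~\ref{reallymorreycase}(ii) cause no clash — but all of this is inherited from Section~\ref{entro_seq} and the Preparation, so the function-space statement follows at once.

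\begin{proof}
By Lemma~\ref{entlemma} it suffices to treat a bounded $C^\infty$ domain $\Omega$. For $\mathcal{A}=\mathcal{N}$ the two commutative diagrams of the \emph{Preparation} above, together with the multiplicativity \eqref{e-multi}, give
\[
e_k\big(\id_{\mathcal{N}}:\MBe(\Omega)\hookrightarrow\MBz(\Omega)\big)\ \sim\ e_k\big(\id:\mbet(Q)\to\mbzt(Q)\big),
\]
where $\sigma_i=s_i+\tfrac d2$, so that $\sigma_1-\sigma_2=s_1-s_2$. Hence the assertions (i) and (ii) are immediate consequences of Theorem~\ref{reallymorreycase}(i) and (ii), respectively. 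For $\mathcal{A}=\mathcal{E}$ we use that \eqref{elem} remains valid with $\Rn$ replaced by $\Omega$, so
\[
{\cal N}^{s_i}_{u_i,p_i,\min\{p_i,q_i\}}(\Omega)\hookrightarrow\mathcal{E}^{s_i}_{u_i,p_i,q_i}(\Omega)\hookrightarrow{\cal N}^{s_i}_{u_i,p_i,\infty}(\Omega),\qquad i=1,2.
\]
Since the hypothesis \eqref{bd3acompfu} and the exponents in the conclusion do not depend on the fine parameters $q_i$, combining these embeddings with the already established $\mathcal{N}$-estimates and using \eqref{e-multi} once more yields the stated upper and lower bounds for $e_k(\id_{\mathcal{E}})$ as well.
\end{proof}
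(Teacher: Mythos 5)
Your proposal is correct and follows essentially the same route as the paper: reduction to a $C^\infty$ domain via Lemma~\ref{entlemma}, the wavelet factorisation through the sequence spaces $\mbt(Q)$ combined with Theorem~\ref{reallymorreycase}, and the sandwich \eqref{elem} together with the $q$-independence of all exponents to cover $\mathcal{A}=\mathcal{E}$. The only quibble is a harmless slip in your informal discussion about which fine index lands in source versus target for the upper and lower $\mathcal{E}$-bounds, which is immaterial since the $\mathcal{N}$-estimates do not depend on $q_i$.
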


\begin{proof}
We follow the same strategy as in case of Theorem~\ref{ek_classical_fu}, using this time Theorem~\ref{reallymorreycase}.
\end{proof}

\begin{ownremark}\label{rem-limiting}
We think that the outcome is really remarkable for at least two reasons: although the result in (ii) is not yet sharp, it shows that the `classical' asymptotic behaviour of the entropy numbers, $e_k\sim k^{-\frac{s_1-s_2}{d}}$, cannot be true in this case. This is indeed surprising and was not to be expected before. Secondly, we want to point out again the interplay between the Morrey fine parameters $p_i$ and the smoothness parameters $s_i$ in this case.
  
  We briefly return to our remark at the end of Section~\ref{entro_seq} concerning the situation $s_1-s_2=d(\frac{1}{p_1}-\frac{1}{p_2})$ where the above cases (i) and (ii) of Theorem~\ref{reallymorrey_fu} meet and we obtain the two-sided estimate
  \[
c\  k^{-\frac{s_1-s_2}{d}} \leq  e_k \big(\id_{\mathcal{A}}: \MAe(\Omega)\hookrightarrow \MAz(\Omega)\big)\leq c_\varepsilon\ k^{-\frac{s_1-s_2}{d}+\varepsilon}.
  \]
  In similar {\em limiting cases} for embeddings an additional $\log$-term appears occasionally, and even an influence of the parameters $q_i$, $i=1,2$, and a different behaviour for the case $\mathcal{A}=\mathcal{N}$ or $\mathcal{A}=\mathcal{E}$ has been found, see \cite{KLSS1,Leo-poznan,HS1}. But we have no claim for our   limiting situation at the moment. Moreover, we have observed a different phenomenon in \cite{HS2} concerning some weighted situation: in that case the local behaviour of the weight was involved in the characterisation of the compactness of that embedding, while the entropy numbers `ignored' this local part completely. So it remains a tricky business after all with many open questions.
\end{ownremark}

\begin{ownremark}
  Note that  Theorems~\ref{ek_classical_fu} and \ref{reallymorrey_fu} imply, in particular, the extension of 
Theorem~\ref{comp} to arbitrary bounded domains in view of the properties of entropy numbers. 
\end{ownremark}

We consider some special cases, and begin with the case $p_1=u_1$, that is, when the source space $\MAe(\Omega)$ coincides with the space $\Ae(\Omega)$. Then Theorem~\ref{reallymorrey_fu}, i.e. \eqref{bd3acompfu} is impossible and we arrive in all possible settings at the classical behaviour for the entropy numbers, recall Remark~\ref{remark-ek-class}.

\begin{corollary}\label{source_class}
  Let $\Omega$ be a bounded domain in $\R^d$,  $s_i\in \R$, $0<q_i\leq\infty$, $0<p_i\leq u_i<\infty$, $i=1,2$, and \eqref{cond-comp} satisfied.  Assume $p_1=u_1$. Then
  \[
e_k \big(\id: \Ae(\Omega)\hookrightarrow \MAz(\Omega)\big) \sim  e_k \big(\id_{A}: \Ae(\Omega)\hookrightarrow \Az(\Omega)\big) \sim     k^{-\frac{s_1-s_2}{d}}
  \]
\end{corollary}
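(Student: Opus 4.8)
The plan is to reduce the assertion to the already-established `classical' result Theorem~\ref{ek_classical_fu}, by verifying that the hypothesis $p_1=u_1$ together with the compactness condition \eqref{cond-comp} forces the parameters into exactly the range handled there. First I would record the trivial identification: by Definition~\ref{D-spaces-Omega}, $p_1=u_1$ gives $\MAe(\Omega)=\Ae(\Omega)$, so that $\id\colon\Ae(\Omega)\hookrightarrow\MAz(\Omega)$ is precisely the embedding $\id_{\mathcal A}$ appearing in Theorem~\ref{comp} and in Theorem~\ref{ek_classical_fu}.

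Next I would simplify the maximum on the right-hand side of \eqref{cond-comp}. When $p_1=u_1$ the three competing quantities become $0$, $\frac{1}{u_1}-\frac{1}{u_2}=\frac{1}{p_1}-\frac{1}{u_2}$, and $\frac{p_1}{u_1}\bigl(\frac{1}{p_1}-\frac{1}{p_2}\bigr)=\frac{1}{p_1}-\frac{1}{p_2}$. Since $p_2\le u_2$ we have $\frac{1}{p_2}\ge\frac{1}{u_2}$, hence $\frac{1}{p_1}-\frac{1}{p_2}\le\frac{1}{p_1}-\frac{1}{u_2}$, and so the maximum equals $\max\{0,\frac{1}{u_1}-\frac{1}{u_2}\}$. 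Consequently, under \eqref{cond-comp} with $p_1=u_1$ we are automatically in case (i) of Theorem~\ref{ek_classical_fu} (when this maximum is $0$) or in case (ii) (when it equals $\frac{1}{u_1}-\frac{1}{u_2}>0$); that theorem then immediately yields $e_k(\id_{\mathcal A})\sim k^{-\frac{s_1-s_2}{d}}$, which is the first of the two claimed equivalences. The same computation shows that hypothesis \eqref{bd3acompfu} of Theorem~\ref{reallymorrey_fu} --- which requires $\frac{p_1}{u_1}\bigl(\frac{1}{p_1}-\frac{1}{p_2}\bigr)>\frac{1}{u_1}-\frac{1}{u_2}$, i.e.\ $p_2>u_2$ --- cannot be satisfied when $p_2\le u_2$; I would mention this only as a side remark, since the exclusion of the non-classical regime, while true, is not logically needed for the proof.

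For the second equivalence I would invoke the classical Edmunds--Triebel result recalled in Remark~\ref{remark-ek-class}: since $\frac{s_1-s_2}{d}>\max\{0,\frac{1}{p_1}-\frac{1}{u_2},\frac{1}{p_1}-\frac{1}{p_2}\}\ge\max\{0,\frac{1}{p_1}-\frac{1}{p_2}\}$, the embedding $\id_A\colon\Ae(\Omega)\hookrightarrow\Az(\Omega)$ is compact and $e_k(\id_A)\sim k^{-\frac{s_1-s_2}{d}}$ by \cite{ET1,ET2} (see also \cite[Thm.~3.3.3/2]{ET}). Chaining the two equivalences proves the corollary. I do not expect any genuine obstacle here: the entire content is the bookkeeping of the maximum in \eqref{cond-comp} under $p_1=u_1$, and the only point requiring a moment's care is to observe that the target being a proper Morrey space rather than a classical space is irrelevant --- but this is exactly what Theorem~\ref{ek_classical_fu} already accommodates, since it allows $p_2<u_2$.
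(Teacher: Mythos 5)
Your proposal is correct and follows exactly the paper's route: the paper's proof is the one-liner ``this follows immediately from Theorem~\ref{ek_classical_fu} when $p_1=u_1$,'' and your bookkeeping --- that $p_1=u_1$ together with $p_2\le u_2$ forces $\frac{p_1}{u_1}\bigl(\frac{1}{p_1}-\frac{1}{p_2}\bigr)\le\frac{1}{u_1}-\frac{1}{u_2}$, so \eqref{cond-comp} lands in case (i) or (ii) of that theorem and excludes the regime of Theorem~\ref{reallymorrey_fu} --- is precisely the detail the paper leaves implicit. The second equivalence via the Edmunds--Triebel result recalled in Remark~\ref{remark-ek-class} is likewise what the paper intends.
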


\begin{proof}
This follows immediately from Theorem~\ref{ek_classical_fu} when $p_1=u_1$.
\end{proof}

\begin{ownremark}
  This `negligence' of the (larger) Morrey target space $\MAz(\Omega)$ (compared with $\Az(\Omega)$) is not completely new: it has its counterpart in the related compactness result \cite[Cor.~4.2]{hs12b}.
  Thus the really new and surprising asymptotic behaviour \eqref{new_morrey_fu} of $e_k(\id_{\mathcal{A}})$ can only appear when $p_1<u_1$. 
\end{ownremark}

We finally consider the case $p_2=u_2$ and may also cover the $L_r$ scale, including the case $r=\infty$ described in Proposition~\ref{MorreyintoLinfty}. 
In that situation case (ii) of Theorem~\ref{ek_classical_fu} is excluded and \eqref{cond-comp} reads as
 \begin{equation}\label{cond-comp-A}
\frac{s_1-s_2}{d} > \max\bigg\{0, \frac{p_1}{u_1} \Big(\frac{1}{p_1}- \frac{1}{p_2}\Big) \bigg\} = \frac{p_1}{u_1}\left(\frac{1}{p_1}-\frac{1}{p_2}\right)_+.
\end{equation}

\begin{corollary}\label{ek_into_classical_fu}
Let $\Omega$ be a bounded domain in $\R^d$,  $s_i\in \R$, $0<q_i\leq\infty$, $0<p_i\leq u_i<\infty$, $i=1,2$, and \eqref{cond-comp-A}  satisfied. Assume that $p_2=u_2$. 
  \bli
\item[{\upshape\bfseries (i)}] If $\ \frac{s_1- s_2}{d} >  \left(\frac{1}{p_1} -\frac{1 }{p_2}\right)_+$, then
\begin{equation*}
    e_k \big(\id: \MAe(\Omega)\hookrightarrow \Az(\Omega)\big) \sim     k^{-\frac{s_1-s_2}{d}} .  
\end{equation*}
\item[{\upshape\bfseries (ii)}]
  If $\ \frac{p_1}{u_1} \Big(\frac{1}{p_1}- \frac{1}{p_2}\Big)< \frac{s_1- s_2}{d} \le  \frac{1}{p_1} -\frac{1 }{p_2}$, then there exists some $c>0$ and for any $\varepsilon>0$ some $c_\varepsilon>0$ such that for all $k\in\nat$, 
\begin{equation*}
  c k^{- \frac{u_1}{u_1-p_1}(\frac{s_1-s_2}{d}-\frac{p_1}{u_1}(\frac{1}{p_1}- \frac{1}{p_2}) )} \le    e_k \big(\id\big) \le c_\varepsilon k^{- \frac{u_1}{u_1-p_1}(\frac{s_1-s_2}{d}-\frac{p_1}{u_1}(\frac{1}{p_1}- \frac{1}{p_2}) )+\varepsilon}.
\end{equation*}
\eli
\end{corollary}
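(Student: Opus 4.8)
The plan is to read this corollary off Theorems~\ref{ek_classical_fu} and \ref{reallymorrey_fu} together with Corollary~\ref{source_class}, simply by specialising to $p_2=u_2$. First I would invoke Lemma~\ref{entlemma} to reduce to a $C^\infty$ bounded domain, and observe that $p_2=u_2$ makes the target space classical, $\MAz(\Omega)={\cal A}^{s_2}_{p_2,p_2,q_2}(\Omega)=\Az(\Omega)$, so the embedding in question is exactly the one treated in Section~\ref{entro_func}. (The $L_r$-scale is then covered too: for $\mathcal{A}=\mathcal{E}$, $s_2=0$, $q_2=2$ and $1<p_2=u_2<\infty$ one has $\mathcal{E}^0_{r,r,2}(\Omega)=\mathcal{M}_{r,r}(\Omega)=L_r(\Omega)$, and the case $r=\infty$ is described by Proposition~\ref{MorreyintoLinfty}.)

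The one elementary fact I would record is that $p_1\le u_1$ forces
\[
\frac{1}{u_1}-\frac{1}{u_2}=\frac{1}{u_1}-\frac{1}{p_2}\le\frac{1}{u_1}-\frac{p_1}{u_1 p_2}=\frac{p_1}{u_1}\Big(\frac{1}{p_1}-\frac{1}{p_2}\Big),
\]
with equality precisely when $p_1=u_1$. Hence the term $\frac1{u_1}-\frac1{u_2}$ never strictly dominates the maximum in \eqref{cond-comp}, which therefore reduces to \eqref{cond-comp-A}, and case (ii) of Theorem~\ref{ek_classical_fu} plays no role except in the degenerate situation $p_1=u_1$.

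Then I would split into three cases. If $p_1=u_1$, the source space is already classical and Corollary~\ref{source_class} gives $e_k\sim k^{-(s_1-s_2)/d}$; one checks that then \eqref{cond-comp-A} coincides with the hypothesis of part (i) of the present corollary, while part (ii) is vacuous. If $p_1<u_1$ and $p_1\ge p_2$, then $\frac1{u_1}-\frac1{u_2}<0$ and $\frac{p_1}{u_1}(\frac1{p_1}-\frac1{p_2})\le 0$, so $\max\{0,\frac1{u_1}-\frac1{u_2},\frac{p_1}{u_1}(\frac1{p_1}-\frac1{p_2})\}=0$ and Theorem~\ref{ek_classical_fu}(i) applies, again giving $e_k\sim k^{-(s_1-s_2)/d}$; since $(\frac1{p_1}-\frac1{p_2})_+=0$ here, once more only part (i) occurs. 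Finally, if $p_1<u_1$ and $p_1<p_2$, then $\frac{p_2}{u_2}=1>\frac{p_1}{u_1}$, so the chain of strict inequalities \eqref{bd3acompfu} holds whenever \eqref{cond-comp-A} does, and Theorem~\ref{reallymorrey_fu} applies verbatim: its part (i) gives the estimate in (i) above when $\frac{s_1-s_2}{d}>\frac1{p_1}-\frac1{p_2}$, and its part (ii) gives the two-sided estimate in (ii) above when $\frac{p_1}{u_1}(\frac1{p_1}-\frac1{p_2})<\frac{s_1-s_2}{d}\le\frac1{p_1}-\frac1{p_2}$. Collecting the three cases completes the proof.

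The only genuine effort here is the case bookkeeping: verifying that after substituting $p_2=u_2$ the thresholds $(\frac1{p_1}-\frac1{p_2})_+$ and $\frac{p_1}{u_1}(\frac1{p_1}-\frac1{p_2})$ appearing in the corollary match exactly those in the ambient theorems, and that the borderline values $p_1=p_2$ and $p_1=u_1$ produce no new regime (and, in particular, no additional logarithmic factors). No estimate beyond those already established in Section~\ref{entro_func} is required.
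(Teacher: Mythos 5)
Your proposal is correct and follows essentially the same route as the paper, whose proof is the one-line observation that the corollary follows from Theorems~\ref{ek_classical_fu} and \ref{reallymorrey_fu} upon setting $p_2=u_2$. Your explicit case bookkeeping (in particular the inequality $\frac{1}{u_1}-\frac{1}{u_2}\le\frac{p_1}{u_1}(\frac{1}{p_1}-\frac{1}{p_2})$ with equality iff $p_1=u_1$, and the verification that the borderline cases produce no new regime) just makes transparent what the paper leaves implicit.
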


\begin{proof}
This follows from Theorems~\ref{ek_classical_fu} and \ref{reallymorrey_fu} with $p_2=u_2$.
\end{proof}

\begin{corollary}\label{C-ekintoLr}
 Let $\Omega$ be a bounded domain in $\R^d$, $1\leq r\leq\infty$, $0<p\leq u<\infty$, and $s>d \frac{p}{u}\left(\frac1p-\frac1r\right)_+$.
\bli
\item[{\upshape\bfseries (i)}] If $\ p\geq r$, then
\begin{equation}
    e_k \big(\id: \MA(\Omega)\hookrightarrow L_r(\Omega)\big) \sim     k^{-\frac{s}{d}} .  
\end{equation}
\item[{\upshape\bfseries (ii)}]
  If $p<r$ and $\ d \frac{p}{u} \Big(\frac{1}{p}- \frac{1}{r}\Big)< s \le  d\Big(\frac{1}{p} -\frac{1 }{r}\Big)$, then there exists some $c>0$ and for any $\varepsilon>0$ some $c_\varepsilon>0$ such that for all $k\in\nat$, 
\begin{equation}\label{new_morrey_Lr}
  c k^{- \frac{u}{u-p}(\frac{s}{d}-\frac{p}{u}(\frac{1}{p}- \frac{1}{r}) )} \le    e_k \big(\id: \MA(\Omega)\hookrightarrow L_r(\Omega)\big) \le c_\varepsilon k^{- \frac{u}{u-p}(\frac{s}{d}-\frac{p}{u}(\frac{1}{p}- \frac{1}{r}) )+\varepsilon},
\end{equation}
\eli
where $L_\infty(\Omega)$ can be replaced in all cases by $C(\Omega)$.
\end{corollary}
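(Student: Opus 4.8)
The plan is to recognise the target $L_r(\Omega)$ — and, at the endpoint, $C(\Omega)$ — as a member, or for $r\in\{1,\infty\}$ a limiting member, of a Triebel--Lizorkin scale, so that $\id:\MA(\Omega)\hookrightarrow L_r(\Omega)$ becomes a special case of Corollary~\ref{ek_into_classical_fu}, which already handles the situation $p_2=u_2$. For $1<r<\infty$ one has $L_r(\Omega)=F^0_{r,2}(\Omega)=\mathcal{E}^0_{r,r,2}(\Omega)$ (Mazzucato's identity $\mathcal{E}^0_{u,p,2}(\Rn)=\M(\Rn)$ with $u=p=r$, transplanted to $\Omega$ by restriction), so I would apply Corollary~\ref{ek_into_classical_fu} with $s_1=s$, $p_1=p$, $u_1=u$, $q_1$ the (irrelevant) fine index of the source, and $s_2=0$, $p_2=u_2=r$, $q_2=2$. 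Under this dictionary the compactness condition \eqref{cond-comp-A} turns into exactly the hypothesis $\frac sd>\frac pu(\frac1p-\frac1r)_+$ of the present statement; part~(i) of Corollary~\ref{ek_into_classical_fu} — which for $p\ge r$ holds automatically, since then $(\frac1p-\frac1r)_+=0$ — gives $e_k\sim k^{-s/d}$, and part~(ii) gives the two-sided estimate \eqref{new_morrey_Lr} with exponent $\frac{u}{u-p}\bigl(\frac sd-\frac pu(\frac1p-\frac1r)\bigr)$. This is immediate when $\mathcal{A}=\mathcal{E}$. When $\mathcal{A}=\mathcal{N}$, the $F$-target must be matched with the Besov scale: I would use $B^0_{r,\min\{r,2\}}(\Omega)\hookrightarrow F^0_{r,2}(\Omega)\hookrightarrow B^0_{r,\max\{r,2\}}(\Omega)$ from \eqref{elem-class}, noting that the asymptotics of $e_k\bigl(\mathcal{N}^s_{u,p,q}(\Omega)\hookrightarrow B^0_{r,q_2}(\Omega)\bigr)$ delivered by Corollary~\ref{ek_into_classical_fu} are independent of $q_2$; monotonicity of entropy numbers in the target together with the multiplicativity \eqref{e-multi} then squeezes the value for the $F$-target. (Alternatively one may sandwich the source via \eqref{elem}.)

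For the endpoint $r=1$ the same scheme applies, with the classical sandwich $B^0_{1,1}(\Omega)\hookrightarrow L_1(\Omega)\hookrightarrow B^0_{1,\infty}(\Omega)$ and $p_2=u_2=1$: Corollary~\ref{ek_into_classical_fu} applied to the two Besov targets yields identical asymptotics, and squeezing gives the claim for $L_1(\Omega)$ (when $\mathcal{A}=\mathcal{E}$ one first passes from $\mathcal{E}$ to $\mathcal{N}$ via \eqref{elem}).

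The delicate case — and the one I expect to be the main obstacle — is $r=\infty$, that is, the target $L_\infty(\Omega)$, and hence also $C(\Omega)$ via $B^0_{\infty,1}(\Omega)\hookrightarrow C(\Omega)\hookrightarrow L_\infty(\Omega)\hookrightarrow B^0_{\infty,\infty}(\Omega)$, because here the target Morrey parameters degenerate to $p_2=u_2=\infty$, which lies outside the range of Corollary~\ref{ek_into_classical_fu}. Note that for $r=\infty$ only part~(ii) is relevant ($p<r=\infty$ always, and part~(i) is vacuous), the hypothesis reduces to $\frac du<s\le\frac dp$ — whose lower endpoint $\frac du$ is precisely the compactness threshold of Proposition~\ref{MorreyintoLinfty}, a reassuring consistency check — and the target exponent is $\frac{u}{u-p}(\frac sd-\frac1u)$. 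To obtain it I would either re-run the wavelet/sequence-space machinery of Section~\ref{entro_seq} directly with target sequence spaces $\widetilde{b}^0_{\infty,q_2}(Q)$ and $m^{2^{jd}}_{\infty,\infty}(Q)=\ell^{2^{jd}}_\infty$ — the estimates of Lemmas~\ref{lemma1503}, \ref{l:u2p1} and \ref{idealmm} and of Proposition~\ref{slargebis} all survive after putting $1/u_2=1/p_2=0$, so the upper bound from the proof of Theorem~\ref{reallymorreycase}(ii) and its Step~3 lower bound carry over — or else argue separately: for the lower bound, use $L_\infty(\Omega)\hookrightarrow L_r(\Omega)$ and let $r\uparrow\infty$ in the already established bound $e_k\gtrsim k^{-\alpha(r)}$ with $\alpha(r)=\frac{u}{u-p}(\frac sd-\frac pu(\frac1p-\frac1r))$, checking that the constants are uniform in $r$ — which they are, since $p_2=u_2=r$ forces the norm of the auxiliary projection in the lower-bound construction of Theorem~\ref{reallymorreycase} to be $\le 2^{jd(1/p_2-1/u_2)}=1$ — and using $\alpha(r)\downarrow\frac{u}{u-p}(\frac sd-\frac1u)$; for the upper bound, take the $B^0_{\infty,1}(\Omega)$-endpoint of the sandwich, treated as in the first paragraph with $1/p_2=1/u_2=0$. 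Finally, $e_k(\id:\MA(\Omega)\hookrightarrow C(\Omega))$ is squeezed between $e_k(\id:\MA(\Omega)\hookrightarrow L_\infty(\Omega))$ and $e_k(\id:\MA(\Omega)\hookrightarrow B^0_{\infty,1}(\Omega))$, which share the same asymptotics. Thus the whole corollary is a specialisation of Corollary~\ref{ek_into_classical_fu} (ultimately of Theorem~\ref{reallymorrey_fu}) to targets with $p_2=u_2$; the only genuinely new work is the $r=\infty$ endpoint, where one must push the Section~\ref{entro_seq} estimates to the degenerate value $u_2=p_2=\infty$, the sole subtlety being uniformity of the lower-bound constants under $r\uparrow\infty$, guaranteed precisely because then $p_2=u_2=r$.
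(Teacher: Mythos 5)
Your proof is correct and follows essentially the same route as the paper, which simply sandwiches $L_r(\Omega)$ between $B^0_{r,1}(\Omega)$ and $B^0_{r,\infty}(\Omega)$ and invokes Corollary~\ref{ek_into_classical_fu} together with the multiplicativity \eqref{e-multi}; your detour through $F^0_{r,2}(\Omega)$ for $1<r<\infty$ is unnecessary, since the Besov sandwich already covers all $1\leq r\leq\infty$ uniformly. Your careful treatment of the endpoint $r=\infty$ --- where $p_2=u_2=\infty$ lies formally outside the stated range of Corollary~\ref{ek_into_classical_fu} --- addresses a point the paper's two-line proof passes over in silence, and your check that the Step~3 lower-bound construction of Theorem~\ref{reallymorreycase} degenerates harmlessly (projection norm $2^{jd(1/p_2-1/u_2)}=1$) is sound.
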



\begin{proof}
The result follows from Corollary~\ref{ek_into_classical_fu} together with the well-known embeddings
$  B^0_{r,1}(\Omega)\hookrightarrow L_r(\Omega)\hookrightarrow B^0_{r,\infty}(\Omega)$
and the multiplicativity of entropy numbers \eqref{e-multi}.
\end{proof}


\begin{ownremark}
We obtained in \cite{YHMSY,HaSk-krakow} some first results for approximation numbers of the embedding $\id_{\mathcal{A}}$.  In this context we refer to \cite[Sect.~6]{Bai-Si} where also the periodic case and more general Morrey type spaces are studied. 
\end{ownremark}

\begin{ownremark}
Thanks to one reviewer we were pointed to the recent preprint \cite{M-U} which is based on \cite{EN-2}. It might well be that using this approach one could circumvent our above interpolation argument and thus seal the $\varepsilon$-gap in the exponents in Theorems~\ref{reallymorreycase}(ii) and \ref{reallymorrey_fu}(ii), respectively. We leave it as an open question here.  
\end{ownremark}

\section*{Acknowledgment} 
We are  indebted to the referees of the first  version of that paper for their valuable remarks which helped to improve the results and the  presentation.   



\end{document}